\newcommand\void[1]       {}
\newtheorem{thm}{Theorem}
\newtheorem{defn}[thm]{Definition}
\newtheorem{prop}[thm]{Proposition}
\newtheorem{cor}[thm]{Corollary}
\newtheorem{rema}[thm]{Remark}
\newtheorem{lem}[thm]{Lemma}
\newtheorem{conj}[thm]{Conjecture}
\newtheorem{expl}[thm]{Example}
\numberwithin{equation}{section}
\numberwithin{thm}{section}
\newcommand\nn             {\nonumber \\}
\newcommand\be            {\begin{equation}}
\newcommand\ee            {\end{equation}}
\newcommand\bea           {\begin{eqnarray}}
\newcommand\eea         {\end{eqnarray}}
\newcommand\bnu          {\begin{enumerate}}
\newcommand\enu          {\end{enumerate}}
\newcommand\id            {\mathrm{id}}
\newcommand\op          {\mathrm{op}}
\newcommand\ob          {\mathrm{Ob}}
\newcommand\ev          {\mathrm{ev}}
\newcommand\aut      {\mathrm{Aut}}
\newcommand\rep     {\EuScript{R}\mathrm{ep}}
\newcommand\fun     {\mathrm{Fun}}
\newcommand\Fun    {\EuScript{F}\mathrm{un}}
\newcommand\one    {\mathbf{1}}
\newcommand{\rev} {\mathrm{rev}}
\newcommand{\pf}{\begin{proof}}
\newcommand{\epf}{\end{proof}}
\newcommand\Cb            {\mathbb{C}}
\newcommand\Rb            {\mathbb{R}}
\newcommand\Zb            {\mathbb{Z}}
\newcommand\EA           {\EuScript{A}}
\newcommand\EB           {\EuScript{B}}
\newcommand\EC           {\EuScript{C}}
\newcommand\ED           {\EuScript{D}}
\newcommand\EE          {\EuScript{E}}
\newcommand\EF          {\EuScript{F}}
\newcommand\EI           {\EuScript{I}}
\newcommand\EK         {\EuScript{K}}
\newcommand\EL          {\EuScript{L}}
\newcommand\EM          {\EuScript{M}}
\newcommand\EN         {\EuScript{N}}
\newcommand\EP         {\EuScript{P}}
\newcommand\EW        {\EuScript{W}}
\newcommand\fr            {fr}
\newcommand\Rep        {\EuScript{R}\mathrm{ep}}
\newcommand\vect         {\mathrm{Vec}}
\newcommand\svect       {s\mathrm{Vec}}
\newcommand\fpdim       {\mathrm{FPdim}}
\newcommand\bfce        {$\mathrm{BFC}_{/\EE}$}
\newcommand\nbfce     {$\mathrm{NBFC}_{/\EE}$}
\newcommand\mtce       {$\mathrm{UMTC}_{/\EE}$}
\newcommand\ubfc        {$\mathrm{UBFC}$}
\newcommand\ubfce        {$\mathrm{UBFC}_{/\EE}$}
\newcommand\umtc       {$\mathrm{UMTC}$}
\newcommand\umtce       {$\mathrm{UMTC}_{/\EE}$}
\begin{document}

\begin{center} \LARGE
Modular extensions of unitary braided fusion categories\\
and 2+1D topological/SPT orders with symmetries
\end{center}

\vskip 2em
\begin{center}
{\large
Tian Lan$^{a,b}$, \, Liang Kong$^{c,d}$,\,  
Xiao-Gang Wen$^{e,a}$,
~\footnote{Emails:
{\tt tlan@perimeterinstitute.ca, kong.fan.liang@gmail.com, xgwen@mit.edu}}}
\\[2em]
$^a$ Perimeter Institute for Theoretical Physics\\
Waterloo, Ontario N2L 2Y5, Canada
\\[1em]
$^b$ Department of Physics and Astronomy\\
  University of Waterloo, Waterloo, Ontario N2L 3G1, Canada
\\[1em]
$^c$ Department of Mathematics and Statistics\\
University of New Hampshire, Durham, NH 03824, USA
\\[1em]
$^d$ Center of Mathematical Sciences and Applications\\ 
Harvard University, Cambridge, MA 02138, USA
\\[1em] 
$^e$ Department of Physics, Massachusetts Institute of Technology\\
Cambridge, Massachusetts 02139, USA
\end{center}

\vskip 4em

\begin{abstract}
A finite bosonic or fermionic symmetry can be described uniquely by a symmetric fusion category $\EE$. In this work, we propose that 2+1D topological/SPT orders with a fixed finite symmetry $\EE$ are classified, up to $E_8$ quantum Hall states, by the unitary modular tensor categories $\EC$ over $\EE$ and the modular extensions of each $\EC$. In the case $\EC=\EE$, we prove that the set $\EM_{ext}(\EE)$ of all modular extensions of $\EE$ has a natural structure of a finite abelian group. We also prove that the set $\EM_{ext}(\EC)$ of all modular extensions of $\EC$, if not empty, is equipped with a natural $\EM_{ext}(\EE)$-action that is free and transitive. Namely, the set $\EM_{ext}(\EC)$ is an $\EM_{ext}(\EE)$-torsor. As special cases, we explain in details how the group $\EM_{ext}(\EE)$ recovers the well-known group-cohomology classification of the 2+1D bosonic SPT orders and Kitaev's 16 fold ways. We also discuss briefly the behavior of the group $\EM_{ext}(\EE)$ under the symmetry-breaking processes and its relation to Witt groups. 
\end{abstract}

\void{
\begin{abstract}
The fusion-braiding properties of particles within a 2-dimensional open disk can be
described by unitary modular tensor categories (UMTC). We explain that, in the presence
of a finite bosonic or fermionic symmetry, these properties can be described by a UMTC
$\EC$ over a symmetric fusion category (SFC) $\EE$ (i.e. a \umtce), where $\EE$ describes local
particles that carry charges valued in $\EE$. 
In general, the fusion-braiding properties described by a \umtce, however, 
do not satisfy the so-called anomaly-free principle, that every particle can be remotely
detected by some particles via double braidings.  In this work, we propose that the anomaly-free
fusion-braiding properties are described by a \umtce ~that allows
modular extensions. Different modular extensions for a fixed $\EC$ have different physical meanings. We propose that 2+1D topological orders with a fixed finite symmetry $\EE$ are
classified, up to $E_8$ quantum Hall states, by \umtce's $\EC$ together with the modular extensions of $\EC$.  In the case $\EC=\EE$, we prove that the set $\EM_{ext}(\EE)$ of all modular extensions of $\EE$ has a natural structure of an abelian group. We also prove that the set
$\EM_{ext}(\EC)$ of all modular extensions of $\EC$ form an
$\EM_{ext}(\EE)$-torsor. As special cases, we explain in details how the groups
of modular extensions recover the classification of the 2+1D bosonic SPT orders
by group cohomology and that of the fermonic ones by Kitaev's 16 fold ways. We
also discuss briefly the behavior of the group $\EM_{ext}(\EE)$ and the
$\EM_{ext}(\EE)$-torsors under the symmetry-breaking processes. 
\end{abstract}
}

\newpage
\tableofcontents

\vspace{1cm}

\section{Introduction}

In this work, we prove some mathematical results about (unitary) braided fusion
categories that are completely motivated by the physics of topological orders
with symmetries (see Sec.\,\ref{sec:physics}). More physical discussion of the same subject will appear in a companioned physics paper \cite{lkw2}. In this introduction section, we try to keep the physics to the minimum and mainly focus on introducing our main results in mathematics. 

\void{
It is well known that the consistent fusion-braiding properties of particles on a
2-dimensional open disk are described by unitary modular tensor
categories (UMTC) \cite{MS,BK,kitaev}.  The UMTC satisfies a so called
anomaly-free principle \cite{Levin,kong-wen}: every particles can be probed
remotely be a detector, which is another particle, via double braiding . 

In the presence of symmetry, there will be local particles that carry
representations of the symmetry group.  Those particles form a symmetric fusion
category (SFC) $\EE$.  In fact, a finite bosonic or fermonic symmetry, i.e. a
finite group $G$ or $(G,z)$ (see Sec.\,\ref{sec:physics}), can be characterized
by the SFC $\EE$ uniquely up to braided monoidal equivalences.  So to describe
the fusion and braiding of particles with symmetry, we need to include the
particles described by SFC $\EE$ which encode the symmetry.  Thus, we propose
that the fusion and braiding of particles with symmetry are described by
non-degenerate braided fusion category $\EC$ over SFC $\EE$.  

However, such a fusion and braiding described by $\EC$ over $\EE$ may not
satisfy the anomaly-free principle (and may not be consistent physically).  We
propose that an anomaly-free fusion and braiding are described by a $\EC$ over
$\EE$ that has modular extensions.  Such an anomaly-free fusion and braiding
are not only consistent on an open disk, they also consistent on any
2-dimensional manifolds (which may contain symmetry twists).

In general, a given  $\EC$ over $\EE$ may have several  modular extensions.
Amazingly, those different modular extensions have physical meanings:
}

\medskip
A finite bosonic or fermonic symmetry, i.e. a finite group $G$ or $(G,z)$ (see Sec.\,\ref{sec:physics}), is uniquely determined by a symmetric fusion category (SFC) $\EE$ up to braided monoidal equivalences. In Sec.\,\ref{sec:physics}, we propose 
that 2+1D topological orders \cite{Wtop} and symmetry protected trivial (SPT) orders \cite{gu-wen,cglw}
with an on-site symmetry $\EE$ are classified, up to $E_8$ quantum Hall states, by the equivalence classes of the triples $(\EC, \EM, \iota_\EM)$, which are explained below.  
\bnu

\item $\EC$ is a unitary modular tensor category $\EC$ over $\EE$, or a \umtce, which is defined by a unitary braided fusion category $\EC$ such that its M\"{u}ger center is $\EE$ (see Def.\,\ref{def:umtce}). Physically, the \umtce ~$\EC$ describes all the excitations in the bulk of the associated topological states. The bulk excitations, in general, are not enough to uniquely determine the topological order. 

\item $\EM$ is a unitary modular tensor category (UMTC), and $\iota_\EM: \EC \hookrightarrow \EM$ is a braided full embedding such that the M\"{u}ger centralizer of $\EE$ in $\EM$ is $\EC$. The pair $(\EM, \iota_\EM)$ is called a modular extension of $\EC$, a notion which was first introduced by M\"{u}ger \cite{mueger1} (see Remark\,\ref{rema:minimal}). Physically, a modular extension of $\EC$ amounts to a categorical way of gauging the categorical symmetry $\EE$ in $\EC$ (see Sec.\,\ref{sec:physics}).
\enu

We denote the set of equivalence classes of the modular extensions of a fixed \umtce ~$\EC$ by $\EM_{ext}(\EC)$ (see Def.\,\ref{def:eq-mext}). The simplest example of \umtce ~is just the SFC $\EE$ itself. The modular extensions of $\EE$ always exist. For example, $(Z(\EE), \iota_0)$, where $Z(\EE)$ is the Drinfeld center of $\EE$ and $\iota_0:\EE \to Z(\EE)$ is the canonical full embedding, is a modular extension of $\EE$. For generic $\EC$, Drinfeld showed that the set $\EM_{ext}(\EC)$ can be empty \cite{drinfeld}. 

\medskip
The main results of this work are summarized in the following theorem. 
\begin{thm}
The set $\EM_{ext}(\EE)$, together with a naturally defined multiplication $\boxtimes_\EE^{(-,-)}$ and the identity element $(Z(\EE), \iota_0)$, is a finite abelian group. For a \umtce ~$\EC$, the set $\EM_{ext}(\EC)$, if not empty, is naturally equipped with a free and transitive $\EM_{ext}(\EE)$-action, or equivalently, an $\EM_{ext}(\EE)$-torsor. 
\end{thm}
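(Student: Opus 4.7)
The plan is to define the multiplication $\boxtimes_\EE^{(-,-)}$ by anyon condensation, i.e., by passing to local modules for a canonical Lagrangian algebra. Given $(\EM_1, \iota_1), (\EM_2, \iota_2) \in \EM_{ext}(\EE)$, the Deligne product $\EM_1 \boxtimes \EM_2$ is a UMTC, and using that $\EE$ is symmetric (so $\EE \simeq \EE^{\rev}$), the image of $\iota_1 \boxtimes \iota_2$ realises the canonical Lagrangian algebra $L_\EE := \bigoplus_i X_i \boxtimes X_i^\ast$ of $\EE \boxtimes \EE^{\rev}$ as a connected \'etale algebra inside $\EM_1 \boxtimes \EM_2$. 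Set
\[
(\EM_1, \iota_1) \boxtimes_\EE^{(-,-)} (\EM_2, \iota_2) \; := \; \bigl((\EM_1 \boxtimes \EM_2)_{L_\EE}^0, \; \iota\bigr),
\]
the UMTC of local $L_\EE$-modules together with the induced diagonal embedding $\iota$ of $\EE$. The first task is to check that this lands in $\EM_{ext}(\EE)$: the Frobenius--Perron dimension count gives $\fpdim = \fpdim(\EE)^2$, and the M\"{u}ger centralizer of $\iota(\EE)$ in the condensed category can be computed to be $\EE$ itself.

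Next I would verify the group axioms. Commutativity follows from the symmetry of the Deligne product together with the swap-invariance of $L_\EE$. Associativity reduces to the compatibility of iterated anyon condensation with the Deligne product: inside $\EM_1 \boxtimes \EM_2 \boxtimes \EM_3$ the two natural condensation algebras agree up to reordering, yielding canonical equivalences of the condensed UMTCs. For the identity, $(Z(\EE), \iota_0)$ works because condensing $L_\EE$ inside $\EM \boxtimes Z(\EE)$ pairs the canonical copy of $\EE \subset Z(\EE)$ against $\iota(\EE) \subset \EM$; combining the Frobenius--Perron count with an explicit braided functor $\EM \to (\EM \boxtimes Z(\EE))_{L_\EE}^0$ (obtained by tensoring with the unit object of $Z(\EE)$ and projecting) should exhibit the latter as $\EM$. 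Inverses are given by $(\overline{\EM}, \iota)$ with the reversed braiding on $\EM$; the key input is M\"{u}ger's equivalence $\EM \boxtimes \overline{\EM} \simeq Z(\EM)$ of UMTCs, from which one must identify the $L_\EE$-condensation on the right-hand side with $Z(\EE)$, crucially using that $\iota(\EE)$ is its own M\"{u}ger centralizer in $\EM$ (i.e.\ that $\EC=\EE$ in the present case). Finiteness of $\EM_{ext}(\EE)$ then follows from the a priori bound $\fpdim(\EM) = \fpdim(\EE)^2$ together with the general finiteness of equivalence classes of UMTCs of bounded total dimension.

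For the torsor claim, the same recipe yields a map
\[
\EM_{ext}(\EC) \times \EM_{ext}(\EE) \; \longrightarrow \; \EM_{ext}(\EC), \qquad \bigl((\EN, \iota_\EN), (\EM, \iota_\EM)\bigr) \; \longmapsto \; \bigl((\EN \boxtimes \EM)_{L_\EE}^0, \iota\bigr),
\]
condensing $L_\EE \subset \iota_\EN(\EE) \boxtimes \iota_\EM(\EE)$ inside $\EN \boxtimes \EM$. Well-definedness (that the output is again a modular extension of $\EC$ rather than of some larger symmetric subcategory) and the action axioms are handled by the same condensation toolbox, using that the M\"{u}ger centralizer of $\EE$ in $\EN$ is $\EC$ by assumption. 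Transitivity is established by producing, for $(\EN_1, \iota_1), (\EN_2, \iota_2) \in \EM_{ext}(\EC)$, an explicit element of $\EM_{ext}(\EE)$ sending one to the other: take $(\overline{\EN_1} \boxtimes \EN_2)_{L_\EC}^0$, where $L_\EC$ is the canonical Lagrangian algebra of $\EC \boxtimes \EC^{\rev}$ sitting diagonally via $\iota_1$ and $\iota_2$. Freeness then follows from transitivity combined with the tautology that a group acts freely on itself. The main obstacle I anticipate lies precisely in the two ``identification'' steps flagged above, namely $\EM \boxtimes_\EE^{(-,-)} \overline{\EM} \simeq Z(\EE)$ for the inverse, and the analogous identification needed to verify transitivity; both require a careful compatibility between condensed-category constructions and Drinfeld-center constructions, hinging on the hypothesis that $\iota(\EE)$ is its own M\"{u}ger centralizer.
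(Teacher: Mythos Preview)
Your overall strategy coincides with the paper's: the product is defined by condensing $L_\EE$ in the Deligne product, commutativity and associativity come from swap-invariance and reassociation of the condensation algebra, the unit is established via an explicit functor $\EM \to (\EM \boxtimes Z(\EE))_{L_\EE}^0$, and inverses come from identifying $(\EM \boxtimes \overline{\EM})_{L_\EE}^0$ with $Z(\EE)$. The two ``identification'' steps you flag are indeed the substantive ones; the paper handles the inverse via a lattice anti-isomorphism between condensable subalgebras of $L_\EM$ and fusion subcategories of $(\EM\boxtimes\overline{\EM})_{L_\EM}\simeq\EM$, and the unit via an $\alpha$-induction argument identifying $(\EM \boxtimes Z(\EE))_{L_\EE}$ with the relative center $Z_\EE(\EM)=\Fun_{\EM\boxtimes\EE}(\EM,\EM)$.

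There is, however, a genuine gap in your torsor argument. The claim that ``freeness then follows from transitivity combined with the tautology that a group acts freely on itself'' is not valid: a transitive action need not be free, and the regular action being free is of no use until $\EM_{ext}(\EC)$ has already been identified equivariantly with $\EM_{ext}(\EE)$, which presupposes freeness. The paper proves freeness independently, by establishing a mixed associativity
\[
\overline{\EM} \boxtimes_\EC^{(\overline{\iota_\EM},\,\iota_\EM\boxtimes_\EE\iota_\EK)} \bigl(\EM \boxtimes_\EE^{(\iota_\EM,\iota_\EK)} \EK\bigr)\;\simeq\;\bigl(\overline{\EM} \boxtimes_\EC^{(\overline{\iota_\EM},\iota_\EM)} \EM\bigr)\boxtimes_\EE^{(-,\iota_\EK)}\EK\;\simeq\;\EK,
\]
which exhibits a left inverse to $\EK\mapsto\EM\boxtimes_\EE^{(\iota_\EM,\iota_\EK)}\EK$; this requires comparing two condensation algebras in $\EM\boxtimes\overline{\EM}\boxtimes\EK$ and showing they agree. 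Your transitivity step is also under-specified: the assertion that $\EK:=(\overline{\EN_1}\boxtimes\EN_2)_{L_\EC}^0$ carries $\EN_1$ to $\EN_2$ is the hardest point in the paper. The two relevant condensation algebras in $\EN_1\boxtimes\overline{\EN_1}\boxtimes\EN_2$, namely $(L_\EC\boxtimes\one)\otimes(\one\boxtimes L_\EE)$ and $(L_\EE\boxtimes\one)\otimes(\one\boxtimes L_\EC)$, are \emph{not} isomorphic, and the paper must invoke a theorem of Fr\"ohlich--Fuchs--Runkel--Schweigert identifying their local-module categories by recognising them as the left and right centers of a common non-commutative special symmetric Frobenius algebra.
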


In Sec.\,\ref{sec:mext}, we give a detailed construction of the multiplication $\boxtimes_\EE^{(-,-)}$ (see Lemma\,\ref{lem:MEN}), which has a natural physical meaning. 
We prove the first half of above theorem in Thm.\,\ref{thm:group} and the second half in Thm\,\ref{thm:torsor}. In the end, we prove some results on the behavior of the group $\EM_{ext}(\EE)$ and the $\EM_{ext}(\EE)$-torsor $\EM_{ext}(\EC)$ under the symmetry-breaking processes, and explain a relation between the modular extensions and the Witt groups.

\medskip
The layout of this paper is as follows: in Sec.\,\ref{sec:physics}, we explain our motivations from the physics of topological orders with symmetries; in Sec.\,\ref{sec:bfc}, \ref{sec:alg-bfc}, \ref{sec:sfc}, we recall some basic concepts in braided fusion categories, collect and prove some useful results, and set our notations;
in Sec.\,\ref{sec:bfc-over-e}, we review the notion of a braided fusion category over a symmetric fusion category; in Sec.\,\ref{sec:unitary}, we recall some results on unitarity; in Sec.\,\ref{sec:mext-def}, we recall the notion of a modular extension of a \umtce ~and prove a few useful results; in Sec.\,\ref{sec:mext-E-group}, we prove that the set $\EM_{ext}(\EE)$ is a finite abelian group; in Sec.\,\ref{sec:mext-repG}, \ref{sec:mext-svect}, we explain the relation between $\EM_{ext}(\rep(G))$ and the group-cohomology classification of 2+1D bosonic SPT orders and that between $\EM_{ext}(\svect)$ and Kitaev's 16 fold way; in Sec.\,\ref{sec:torsor}, we prove that the set $\EM_{ext}(\EC)$ is an $\EM_{ext}(\EE)$-torsor; In Sec.\,\ref{sec:sym-breaking}, we discuss the behavior of $\EM_{ext}(\EE)$ and $\EM_{ext}(\EC)$ under certain symmetry breaking processes; in Sec.\,\ref{sec:sym-breaking}, we discuss the relation between the modular extensions and the Witt groups; in Sec.\,\ref{sec:outlook}, we give conclusions and list a few open questions.

\vspace{0.4cm}
\noindent {\bf Acknowledgement}: We thank Pavel Etingof, Chenjie Wang and Zhenghan Wang for useful discussions, and Dmitri Nikshych for helping us to understand the $\mathrm{Aut}(G)$-action on the group cohomologies 
and informing us the results in \cite{bnrw}. This research is supported by NSF Grant No. DMR-1506475, and NSFC 11274192. It is also supported by the John Templeton Foundation No.  39901.  Research at Perimeter Institute
is supported by the Government of Canada through Industry Canada and by the
Province of Ontario through the Ministry of Research.  LK is supported by the
Center of Mathematical Sciences and Applications at Harvard University.

\section{Physics of topological/SPT orders with symmetries} \label{sec:physics}

Topological phases of matters, also called topological orders \cite{Wtop}, are
motivated by the experimental discovery of fractional quantum Hall effects. It
was realized recently that such a topological order can be understood as a
gapped quantum liquid with long range entanglement, a notion which was
introduced in \cite{LRE,zeng-wen}. A bosonic (or fermionic) topological order
is called anomaly-free if it can be realized by a bosonic (or fermionic) local
Hamiltonian lattice model in the same dimension \cite{Wanom}, and is called
anomalous if otherwise \cite{Levin,kong-wen}. In this work, we are only interested in
anomaly-free topological orders (with or without symmetries). For simplicity,
by a topological order (with or without symmetries), we mean an anomaly-free
topological order throughout this work unless we specify otherwise. 

\medskip
It is well-known that 2+1D topological orders without symmetries are classified, up to $E_8$ quantum Hall states, by the categories of excitations in the bulk. A particle-like excitation is called a local excitation if it can be created/annihilated by local operators from the vacuum sector or its direct sums; it is called a non-trivial topological excitations if otherwise. The vacuum sector or its direct sums can be viewed as local excitations. All (particle-like) excitations can be fused and braided, thus form a unitary braided fusion category $\EC$. The vacuum sector corresponds to the tensor unit
$\one_\EC$ in $\EC$. Excitations that correspond to simple objects in $\EC$ are
called simple excitations. By the definition of a local
excitation, it must have trivial double braidings with all (topological or local)
excitations. In an anomaly-free theory, excitations should be able to detect
each other via double braidings. In particular, the only simple excitation that
has trivial double braidings with all excitations must be the vacuum.
Therefore, the only local excitations in a 2+1D topological order are the
vacuum and its direct sums. Categorically, this amounts to say that the unitary
braided fusion category $\EC$ must be non-degenerate. This gives us the
well-known fact that the category of excitations in the bulk of a 2+1D
topological order is given by a non-degenerate unitary braided fusion
category, or equivalently, a unitary modular tensor category (UMTC) (see
for example \cite{kitaev})\footnote{It was proposed in
\cite{kong-wen-zheng} that all bulk excitations, including string-like excitations, in a potentially anomalous 2+1D topological order are
described by a unitary fusion 2-category.}. Note that an $E_8$ quantum Hall state contains no
non-trivial topological excitations in the bulk. But it is non-trivial because
it cannot be changed to a trivial phase via local unitary transformations.
Moreover, it has gapless chiral edge modes that carries the central charge
$c=8$. By the term ``$E_8$ quantum Hall states'', we mean stacking finite
number of layers of $E_8$ quantum Hall states. Since a UMTC defines the central
charge only modulo 8, all 2+1D topological orders are classified by UMTC's
together with the central charges, i.e. by pairs $(\EC, c)$.

\medskip
In this work, we study long-range entangled topological orders with
symmetries.  In the presence of symmetries, even product states with short
range entanglement can belong to different phases, and those phases are called
symmetry protected trivial (SPT) orders.  So in fact, we will study both topological orders and SPT orders (see for example \cite{Wtoprev,gu-wen,srfl,kitaev2,LRE,cglw,freed-moore,bbcw}
and references therein for this vast and fast growing topic). In particular,
we would like to find a categorical classification (up to $E_8$ quantum Hall states) of topological/SPT orders
with an on-site finite symmetry. A finite
symmetry is mathematically described by a finite group $G$. It is called a
fermonic symmetry if $G$ contains the fermonic parity transformation $z$ in
$G$.  Mathematically, the fermonic parity transformation $z\in G$ is a central
element in $G$ (i.e.  $zg=gz, \forall g\in G$) and $z^2=1$. We denote the
fermonic symmetry by the pair $(G,z)$. If $G$ does not contain the fermonic parity transformation, it is called a bosonic symmetry, denoted by $G$ alone. We
denote the category of representations of $G$ by $\rep(G)$. It is a symmetric
fusion category (see Sec.\,\ref{sec:sfc}). For the fermonic symmetry $(G,z)$,
we require that a $G$-representation with $z$ acting as $-1$ (i.e. a fermion)
should braid as a fermion. This gives us a new symmetric fusion category
$\rep(G,z)$, which is the same fusion category as $\rep(G)$ but equipped with a
different braiding (see Sec.\,\ref{sec:sfc}).  When $G=\Zb_2$, the category
$\rep(\Zb_2,z)$ is just the category of super vector spaces $\svect$, i.e.
$\rep(\Zb_2,z)=\svect$. It is known from Deligne's result \cite{deligne} that a symmetric
fusion category $\EE$ is equivalent to either $\rep(G)$ or $\rep(G,z)$ for some
finite group $G$ and a central involutive element $z\in G$.  Therefore, we can define a finite bosonic/fermionic symmetry
simply by a symmetric fusion category $\EE$.

In a 2+1D topological/SPT order with an on-site symmetry $G$ or $(G,z)$, it is clear that the excitations in the bulk still forms a braided fusion category $\EC$. But local excitations in the bulk can carry symmetry charges, which are given by the representations of $G$ or $(G,z)$. In other words, if $\EE$ denotes $\rep(G)$ or $\rep(G,z)$, then $\EC$ must contain $\EE$ as a fusion subcategory (see Sec.\,\ref{sec:bfc} for a definition). Moreover, since these local excitations must have trivial double braidings with all excitations, mathematically, it means that $\EE$ is contained in the M\"{u}ger center of $\EC$ (\cite{mueger1} and see also Sec.\,\ref{sec:bfc}), which is denoted by $\EC'$. Such $\EC$ is called a unitary braided fusion category over $\EE$, or a \ubfce. Although these local excitations in $\EE$ are not detectable by double braidings, they are not anomalous because they are ``protected'' by the symmetry. For anomaly-free topological orders, we can not allow any local excitations that are not protected by the symmetry. In other words, excitations that have trivial double braidings with all excitations in $\EC$ must be those in $\EE$. Mathematically, it means that $\EC'=\EE$. Such a \ubfce ~$\EC$ is called a non-degenerate \ubfce, or a unitary modular tensor category over $\EE$ (a \umtce). Therefore, the excitations in the bulk of a 2+1D topological order with the symmetry $\EE$ must be given by a \umtce. In the simplest case, there is no non-trivial topological excitation in the bulk, i.e. $\EC=\EE$. 

Different from no-symmetry cases, the bulk excitations do not uniquely fix the
associated topological orders up to $E_8$ quantum Hall states. We give two sets of examples. 
\bnu

\item For topological orders with a finite bosonic symmetry $G$ and only symmetry protected local bulk excitations $\EC=\rep(G)$, it is known that there are different SPT orders classified by 3-cocycles in $H^3(G, U(1))$ \cite{cglw}. 

\item For the topological orders with only fermonic parity symmetry $(\Zb_2, z)$ and only symmetry-protected local bulk excitations $\EC=\rep(\Zb_2,z)$, it is known that there are different gapless
chiral edge states classified by Kitaev's 16 fold ways \cite{kitaev}. These 16 phases are
generated by the $p+\mathrm{i} p$ superconductor state
with central charge $c=1/2$ (via stacking operations).  They are different topological orders despite
they have the same category of bulk excitations. 

\enu
Therefore, what we need is to add more data to $\EC$ such that they are able to distinguish topological/SPT orders associated to the same bulk excitations up to $E_8$
states. 


In physics, many systems with hidden degrees of freedom protected by symmetries
can be detected by gauging the symmetry. Motivated by an idea of gauging \cite{LG}, we proposed in \cite{lkw1} a tensor-categorical way of gauging the categorical symmetry $\EE$ by adding more particles to the set of particles in a \umtce ~$\EC$ such that each of them has
non-trivial double braidings with at least one of the local excitations in
$\EE$ (see Remark\,\ref{rema:non-minimal}). This categorical gauging process is complete only if every excitation in
$\EE$ has non-trivial double braidings with at least one newly added particle,
and all the newly added particles, together with old ones in $\EC$, form a
closed and consistent anyon system in the sense that it describes the
bulk excitations of a new 2+1D anomaly-free topological order without symmetry.
Mathematically, a complete categorical gauging process just amounts to find a
UMTC $\EM$ equipped with a braided full embedding $\iota_\EM: \EC
\hookrightarrow \EM$ such that the M\"{u}ger centralizer of $\EE$ in $\EM$,
denoted by $\EE'|_\EM$, is $\EC$. Such a pair $(\EM, \iota_\EM)$ is called a modular extension of $\EC$, a notion which was first introduced by M\"{u}ger in \cite{mueger1} (see also Def.\,\ref{def:mext}). We explain in detail in Sec.\,\ref{sec:mext-repG} how modular extensions of $\rep(G)$ recover the group-cohomology classification of bosonic SPT orders, and in Sec.\,\ref{sec:mext-svect}, we review the well-known 16 modular extensions of $\rep(\Zb_2,z)$ (also known as Kitaev's 16 fold ways). For generic $\EE$, we propose that the modular extensions of $\EE$ with central charge $c=0$ ($\mathrm{mod}\,\, 8$) classify all the 2+1D bosonic/fermionic SPT orders. For general $\EC$, we believe that the modular extensions of $\EC$, if exists, classify, up to $E_8$ quantum Hall states, all the topological orders with the same bulk excitations $\EC$. Such topological orders will be called symmetry enriched topological (SET) orders over $\EC$. 

\begin{rema} \label{rema:non-minimal} {\rm
Requiring each newly added particle to have non-trivial braiding with at least one particle in $\EE$, i.e. $\EE'|_\EM=\EC$, is a strong condition. For a given \umtce ~$\EC$, gauging processes satisfying this strong condition might not exist at all \cite{drinfeld}. In this case, one might want to relax this condition. But it is not yet clear to us what the extensions without satisfying this strong condition represent in physics (see also Remark\,\ref{rema:minimal}). 
}
\end{rema}

In summary, we have proposed the following conjecture on the classification of 2+1D topological/SPT orders with symmetries. 
\begin{conj} \label{conj:set}
2+1D topological/SPT orders with the symmetry $\EE$ are classified, up to $E_8$ quantum Hall states, by the equivalence classes (see Remark\,\ref{rema:eq-relation}) of triples $(\EC, \EM, \iota_{\EM})$, where $\EC$ is a \umtce ~and the pair $(\EM, \iota_\EM)$ is a modular extension of $\EC$. 
In particular, 2+1D SPT orders with the symmetry $\EE$ are classified by triples
$(\EE, \EM, \iota_{\EM})$ such that $\EM$ has a zero ($\mathrm{mod}\,\, 8$) central charge.
\end{conj}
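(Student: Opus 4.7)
The plan is to establish the classification as a bijection between topological/SPT phases (modulo $E_8$ stacking) and equivalence classes of triples $(\EC, \EM, \iota_\EM)$, proceeding through three stages: extraction, reconstruction, and verification on special cases. The extraction direction is largely motivated in Sec.\,\ref{sec:physics}: given a 2+1D topological order with symmetry $\EE$, the bulk excitations form a \umtce ~$\EC$ (local excitations carry $\EE$-charges, while anomaly-freeness forces $\EC'=\EE$), and the categorical gauging procedure produces a UMTC $\EM$ with a braided full embedding $\iota_\EM:\EC\hookrightarrow\EM$ satisfying $\EE'|_\EM=\EC$. The first step, then, is to promote these heuristic moves into well-defined maps on equivalence classes of symmetric Hamiltonians and verify that the resulting triple is independent of microscopic choices up to the equivalence of Remark\,\ref{rema:eq-relation}.

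For the reconstruction direction, my plan is to associate to each triple $(\EC, \EM, \iota_\EM)$ a concrete microscopic realization — for instance a Walker--Wang model or a symmetry-enriched string-net model built from $\EM$ together with the distinguished Tannakian subcategory $\iota_\EM\iota_0(\EE)\subset\EM$ — whose low-energy excitations recover $\EC$ and whose gauged partner recovers $\EM$. Concretely one ``ungauges'' the symmetry: starting from the anomaly-free topological order corresponding to $\EM$, one identifies the symmetric subcategory $\EE\subset\iota_\EM(\EC)\subset\EM$ and restores its local character by condensing it into the vacuum, recovering the original SET/SPT order. A complementary task is to show that inequivalent modular extensions of the same $\EC$ actually yield microscopically distinct phases, which should be detectable via invariants such as the central charge, topological spins and fusion rules of the extra simple objects in $\EM\setminus\iota_\EM(\EC)$, and partition functions on closed $3$-manifolds decorated with $G$-bundles.

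Verification on special cases then pins the bijection to known classifications: when $\EC=\EE=\rep(G)$, match the cohomological classification $H^3(G, U(1))$ of bosonic SPTs, which is precisely the content of Sec.\,\ref{sec:mext-repG}; when $\EC=\EE=\svect$, match Kitaev's 16-fold way, done in Sec.\,\ref{sec:mext-svect}; and when $\EE=\vect$, recover the standard UMTC classification. The torsor structure of $\EM_{ext}(\EC)$ over $\EM_{ext}(\EE)$ established in Thm.\,\ref{thm:torsor} corresponds physically to stacking an SPT onto an SET, and its freeness and transitivity provides a nontrivial consistency check: different SPTs applied to a fixed SET must produce physically distinct SETs, and every SET with given bulk $\EC$ should arise from one such stacking relative to a chosen reference. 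The condition $c\equiv 0\pmod 8$ for SPTs is explained by the fact that SPT phases admit symmetric gapped boundaries.

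The main obstacle is the lack of a purely mathematical definition of ``2+1D topological/SPT order with symmetry'' as an equivalence class of Hamiltonians under symmetric local unitary transformations, which is why the statement is posed as a conjecture rather than a theorem. Any rigorous argument must either build an axiomatic framework for symmetric anomaly-free topological orders together with a functor to the category of triples, or exhibit microscopic models for each triple and prove that all symmetric-local-unitary invariants are captured by the categorical data. The hardest step is the injectivity of the reconstruction map: excluding the possibility that genuinely distinct triples give Hamiltonians connected by symmetric local unitary transformations requires invariants finer than those directly extractable from $\EM$ alone, and controlling this is the genuine mathematical difficulty that currently prevents the conjecture from being upgraded to a theorem.
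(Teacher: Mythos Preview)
The statement is a \emph{conjecture}, and the paper does not attempt to prove it. It is proposed on physical grounds in Sec.\,\ref{sec:physics} and then supported by the mathematical results of the later sections (the group structure on $\EM_{ext}(\EE)$, the match with $H^3(G,U(1))$ in Sec.\,\ref{sec:mext-repG}, the match with Kitaev's 16-fold way in Sec.\,\ref{sec:mext-svect}, and the torsor structure in Thm.\,\ref{thm:torsor}). You correctly identify this: your final paragraph names exactly the obstruction---the absence of a rigorous mathematical definition of a 2+1D topological/SPT order with symmetry---that keeps the statement a conjecture.

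Your proposal is therefore not a proof but a research program, and as such it is reasonable and aligned with how the paper motivates the conjecture. The extraction direction is the physics argument of Sec.\,\ref{sec:physics}; the verification-on-special-cases step is precisely the evidence the paper supplies; and the reconstruction direction (build a Walker--Wang or symmetry-enriched string-net model from the triple and ungauge) is a sensible approach that the paper does not pursue. The one point to flag is that even your outline presupposes an answer to the hardest question: without a precise notion of ``phase'' on the Hamiltonian side, neither well-definedness of the extraction map nor injectivity of the reconstruction map can be formulated, let alone proved. So the program cannot begin until that foundational issue is settled, which is consistent with the paper's choice to leave the statement as a conjecture and focus on the categorical side.
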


In this work, we prove that the set $\EM_{ext}(\EE)$ of modular extensions of
$\EE$ form an abelian group with the multiplication $\boxtimes_\EE^{(-,-)}$
(defined in Lemma\,\ref{lem:MEN}) corresponding to first stacking two layers of systems then breaking the symmetry from $\EE\boxtimes \EE$ to $\EE$ (causing no phase transition \cite{lkw2}). Therefore, $\boxtimes_\EE^{(-,-)}$ is the correct physical stacking operation of two layers of topological/SPT phases with symmetry $\EE$. Moreover, we prove that the group $\EM_{ext}(\rep(G))$ coincides with the group structure on $H^3(G,U(1))$. We also prove that $\EM_{ext}(\svect)\simeq \Zb_{16}$ as groups. 

\begin{rema} \label{rema:eq-relation}  {\rm
We prove in Thm\,\ref{thm:torsor} that the set $\EM_{ext}(\EC)$, if not empty, is equipped with a natural $\EM_{ext}(\EE)$-action that is free and transitive. Namely, $\EM_{ext}(\EC)$ is a $\EM_{ext}(\EE)$-torsor. It is also equipped with a natural $\underline{\mathrm{Aut}}(\EC)$-action (see Remark.\,\ref{rema:aut-C} and \ref{rema:aut-C-2}). 
Using the naturally defined equivalence relation among the triples 
$(\EC, \EM, \iota_\EM)$ (see a precise definition in \cite{lkw2}), Conjecture\,\ref{conj:set} says that the SET orders over $\EC$ are classified, up to $E_8$ states, by the quotient set $\EM_{ext}(\EC)/\underline{\mathrm{Aut}}(\EC)$ (see \cite{lkw2} for more details). 
}
\end{rema}



\void{
Gapped quantum liquid (GQL) phases --- equivalent classes of many-body systems under symmetric
(generalized) local unitary transformations,
including topological ordered phases (no symmetry), symmetry protected trivial
phases and symmetry enriched topological phases. 

Two perspectives ---
\begin{enumerate}
  \item Stacking of GQLs. Construct a two-layer system while keeping the same symmetry.
    GQLs with the same symmetry $\EE$ form a commutative monoid. In particular, there
    are symmetric invertible GQLs, that form an abelian group $iGQL_\EE$. Consider the
    group morphism, $c : iGQL_\EE \to \mathbb{Q}$ that
    takes the chiral central charge. Since there must be a minimal central
    charge $c^m_\EE$, we must have the image $\text{Im}\,c$ to be $c^m_\EE\Zb$.
    The kernel $\ker c$, i.e., symmetric invertible GQLs with zero central
    charge, are the
    symmetry protected trivial (SPT) phases. And we believe that this group
    splits: $iGQL_\EE= \ker c\times (c^m_\EE\Zb)\cong \ker c \times \Zb$.
    Invertible GQLs with non-zero central charge and non-invertible GQLs
    are considered to possess non-trivial topological orders.
  \item Braiding statistics of bulk topological excitations. In general bulk
    excitations form a unitary braided fusion category (UBFC).
    Non-degenerate UBFCs seem to be a complete
    classification of topological orders with no symmetry.
    When there is symmetry, the local
    excitations form a symmetric category $\EE$. So all the invertible GQLs
    with the same symmetry will have the same bulk excitations $\EE$.

    However, it turns out that this perspective can also distinguish different
    invertible GQLs, up to the $E_8$ states (invertible GQLs with no symmetry),
    by studying their modular extensions.

    More generally, we propose a classification of symmetric GQLs, in terms of this tensor
    categorical language.
    The symmetry is characterized by the symmetric category $\EE$. A generic
    GQL phase with symmetry $\EE$ is characterize by a non-degenerate UBFC over
    $\EE$ (\umtce), together with its modular extension and the total central charge.
\end{enumerate}

This paper is devoted to: 1) introduce the notions of \umtce and modular
extensions; 2) study how the stacking of GQLs is carried out in the categorical
language.

We introduce a tensor product $\boxtimes_\EE$ between \umtce's as well as their
modular extensions. As expected, they do form a commutative monoid;
the modular extensions of $\EE$, $\EM_{ext}(\EE)$, do form an abelian group,
which corresponds to
the quotient $iGQL_{\EE}/8\Zb=\ker c \times (c^m_\EE\Zb/8\Zb)$ where $8\Zb$ is generated by the symmetric $E_8$
state.

Moreover, our construction leads to some non-trivial prediction: modular
extensions of a nontrivial \umtce $\EC$, $\EM_{ext}(\EC)$, if exist,
form a torsor over $\EM_{ext}(\EE)$.
The physical interpretation is that: 1) stack a GQL with a non-trivial invertible one,
the phase always changes, but the bulk excitations stay the same; 2) if two
GQLs have the same bulk excitations, they must differ by a unique invertible
GQL. These are the nicest case to expect. But how are the other possibilities
ruled out physically? (other possibilities: a GQL may absorb a non-trivial
invertible one but stay in the same phase, or, there exist certain two GQLs with
the same bulk excitations, but no way to obtain one from the other by stacking
any invertible ones.)

\bigskip


The notion of gapped quantum liquids was introduced in \cite{zeng-wen}. 

There are two simple cases of bosonic gapped quantum liquids: 
\begin{enumerate}
  \item The topological ordered states with no symmetry. The most important
    feature of topological order is that they have (non-abelian) anyonic
    excitations. The fusion and braiding statistics of the anyons are the
    universal (topological) properties of topological orders. They are
    naturally described by the mathematical structure of unitary braided fusion
    categories(UBFC). When we are considering purely 2+1D phases (anomaly-free),
    the braiding non-degeneracy condition is imposed. Thus, bulk excitations of
    topological orders with no symmetry are classified by non-degenerate UBFCs,
    or unitary modular tensor categories (UMTC). We believe that UMTCs,
    together with the total central charges that determines edges states,
    completely  classify topological orders with no symmetry.
  \item There is a symmetry $G$, but no topological orders. In this case they
    are symmetry protected trivial (SPT) phases. There are no anyonic
    excitations in the bulk, only local excitations carrying group
    representations. But the edges states are still non-trivial. SPT phases are
    classified by group cohomology theory.
\end{enumerate}
Now comes two questions:
\begin{enumerate}
  \item What classifies general GQLs that may possess both topological orders
    and symmetry, or symmetry enriched topological orders (SET)?
  \item What about fermionic cases?
\end{enumerate}
We expect to combine UBFCs and group theory for a complete mathematical story of
GQLs. It it natural to consider the automorphisms between UBFCs. This leads to
the approach of $G$-crossed UBFCs. However, it works for bosonic cases, but not
fermionic cases.

In this work, instead, we focus on the group representations, which are carried
by the local excitations of symmetric phases. Group representations are just
UBFCs with maximal braiding degeneracy, the symmetric fusion categories (SFC).
Together with those anyonic excitations, we propose that bulk excitations of
GQLs are classified by non-degenerate UBFCs over the SFC.

However, classifying the bulk excitations is not enough. The SPT phases with
the same symmetry $G$ all have the same bulk excitations $\Rep(G)$, but are
distinct phases, in that they have different edge states. For non-trivial
symmetry the central charge is not enough to characterize the edge states. We
need some additional structures. Ideally they can still be put in the framework
of UBFCs. For this we choose to ``gauge'' the symmetry, i.e., add symmetry
twists as extra
particles that braid non-trivially with original particles. We require that
the braiding of all these particles is non-degenerate. Thus, we obtain a UMTC
after gauging the symmetry. It is a \emph{modular extension} of the original
non-degenerate UBFC over the SFC.

Hopefully, non-degenerate UBFCs over SFCs, together with their modular
extensions and the total central charges, would give us a complete story for
the classification problem of GQLs. In this work, we show that this framework
agrees with all the known results, such as group cohomological classification of
bosonic SPTs and the $G$-crossed UMTC proposal.
We further obtain a new result: given a symmetry, the modular extensions of the
same UBFC form torsors over the invertible GQLs.

There exist topological ordered states with only trivial topological
excitations in the bulk, but non-trivial edge states. They are ``invertible''
under the stacking operation\cite{KW1458,F1478}. More generally, we define  
\begin{defn}
A GQL is invertible if its bulk topological excitations are all trivial.
\end{defn}
Consider some invertible GQLs with the same symmetry given by a symmetric fusion category 
$\EE$.  The bulk excitations of those invertible GQLs are the same which are described by the
same SFC $\EE$. Now the question is: How to distinguish those invertible GQLs
that may have different edge states?

First, we believe that invertible bosonic topological orders with no symmetry
are generated by the $E_8$ quantum Hall state (with central charge $c=8$) via
time-reversal and stacking, and form a $\Zb$ group. Stacking with an $E_8$ quantum Hall 
state only changes the central charge by $8$, and does not change the bulk excitations or
the symmetry. So the only datum we need to know to determine the invertible
bosonic topological order with no symmetry is the central charge $c$.  The
story is parallel for invertible fermionic topological orders with no symmetry,
which are believed to be generated by the $p+\mathrm{i} p$ superconductor state with
central charge $c=1/2$.

Secondly, invertible bosonic GQL's with symmetry are generated by bosonic SPT
states and invertible bosonic topological orders (i.e. $E_8$ states). It was well known that the SPT states with symmetry group given by $G$ are classified by the 3-cocycles in
$H^3(G,U(1))$ \cite{spt}. We briefly review this classification in this subsections. 
}


\section{Unitary braided fusion categories} \label{sec:umtc}

In this section, we briefly recall some basic elements of (unitary) braided fusion categories, collect and prove a few results that are useful later, and also set our notations. The ground field is always chosen to be the complex numbers $\Cb$.

\subsection{Braided fusion categories}  \label{sec:bfc}


In this work, a category is called {\it finite} if it is equivalent to the
category of modules over a finite dimensional algebra $A$ over $\Cb$; it is
called {\it semisimple} if, in addition, $A$ is semisimple. A multi-fusion
category $\EC$ is a semisimple rigid monoidal category. In particular, it has
finitely many inequivalent simple objects, and is equipped with a rigid
monoidal structure, which includes the tensor product functor $\otimes: \EC
\times \EC \to \EC$, the tensor unit $\one_\EC \in \EC$,  an associator $\alpha_{x,y,z}: \, x\otimes (y \otimes z) \xrightarrow{\simeq} (x\otimes y) \otimes z$ for $x,y,z \in \EC$ satisfying the pentagon relations, and unit isomorphisms satisfying the triangle relations. We denote the right dual of an object $x$ as $x^\ast$ and the left dual as ${}^\ast x$. We denote the set of equivalence classes of simple objects by $O(\EC)$. We denote $\EC^\rev$ by the same category $\EC$ but equipped with the reversed tensor product $\otimes^{\rev}$, i.e. $a\otimes^{\rev} b=b\otimes a$. A multi-fusion category $\EC$ with a simple tensor unit $\one_\EC$ is called a {\it fusion category}. A fusion subcategory $\EB$ of $\EC$, denoted by $\EB\subset \EC$, is a full tensor subcategory such that if $x\in \EC$ is isomorphic to a direct summand of an object in $\EB$ then $x\in \EB$. In particular, $\EB$ is a fusion category and $O(\EB)\subset O(\EC)$.

\smallskip
Let $K_0(\EC)$ be the Grothendieck ring of a fusion category $\EC$. According to \cite[Sec.\,8]{eno2002}, there is a unique homomorphism $\fpdim: K_0(\EC) \to \Rb$ such that $\fpdim(x)\geq 0$ for all $x\in \EC$. Actually, $\fpdim(x) \geq 1$ for any non-zero object $x\in \EC$. $\fpdim(x)$ is called the Frobenius-Perron dimension of $x$. The Frobenius-Perron dimension of a fusion category $\EC$ is defined by $\fpdim(\EC)=\sum_{i\in O(\EC)} \fpdim(i)^2$. For a pivotal fusion category $\EC$, there is a different but related notion of dimension, which is called {\it quantum dimension} and denoted by $\dim(x)$ for $x\in \EC$. For a unitary fusion category (see Def.\,\ref{def:unitary}), the Frobenius-Perron dimensions coincide with the quantum dimensions \cite{eno2002}. 

The Frobenius-Perron dimension is very useful in determining whether a given monoidal functor is an equivalence. Consider a monoidal functor $F: \EC \to \ED$ between two fusion categories $\EC$ and $\ED$. We define the image of $F$, denoted by $\mathrm{Im}(F)$, by the smallest fusion subcategory of $\ED$ that contains $F(x), \forall x\in \EC$. By \cite[Prop.\,2.19]{eo}, if $F$ is injective (i.e. $F:\EC\to \mathrm{Im}(F)$ is an equivalence), then $\fpdim(\EC) \leq \fpdim(\ED)$ and the equality holds iff $F$ is a monoidal equivalence. By \cite[Prop.\,2.20]{eo}, if $F: \EC\to \ED$ is surjective (i.e. $\mathrm{Im}(F)=\ED$), then $\fpdim(\EC) \geq \fpdim(\ED)$ and the equality holds iff $F$ is a monoidal equivalence.

\medskip
A left module category $\EM$ over a fusion category $\EC$, or a $\EC$-module $\EM$, is a semisimple $\Cb$-linear category equipped with a $\EC$-action functor $\otimes: \EC \times \EM \to \EM$, which is $\Cb$-linear and right exact for each variables.  For two $\EC$-modules $\EM$ and $\EN$, a $\EC$-module functor $F: \EM \to \EN$ is a $\Cb$-linear functor, equipped with a natural isomorphism $F(c\otimes -) \simeq c\otimes F(-), \forall c\in \EC$, satisfying natural conditions. We denote the category of $\EC$-module functors by $\Fun_\EC(\EM, \EN)$. A right $\EC$-module is just left $\EC^\rev$-module. A $\EC$-$\ED$-bimodule is just a left $\EC\boxtimes\ED^\rev$-module. We denote the category of right exact $\EC$-$\ED$-bimodule functors from $\EM$ and $\EN$ by $\Fun_{\EC|\ED}(\EM,\EN)$. If $\EM$ is an indecomposable $\EC$-module, it is known that $\Fun_\EC(\EM,\EM)$ is also a fusion category.

\medskip
A braided fusion category $\EC$ is a fusion category equipped with a braiding $c_{x,y}: x\otimes y \to y\otimes x$ for $x,y\in \EC$. The category $\overline{\EC}$ is the same fusion category as $\EC$ but equipped with the braidings given by the anti-braidings in $\EC$. 

Given a fusion category $\EC$, there is a canonical braided fusion category $Z(\EC)$ associated to $\EC$, called the Drinfeld center of $\EC$. It consists of objects of pairs $(x, z_{x,-})$, where $z_{x,-}: x\otimes - \to -\otimes x$ (called a {\it half-braiding}) is a natural isomorphism satisfying some natural conditions. Another useful way to characterize the Drinfeld center is the category $\Fun_{\EC|\EC}(\EC,\EC)$ of $\EC$-bimodule functors from $\EC$ to $\EC$, i.e. $Z(\EC)=\Fun_{\EC|\EC}(\EC,\EC)$. We have $\fpdim(Z(\EC)) = \fpdim(\EC)^2$.
The forgetful functor $fr: Z(\EC) \to \EC$ is monoidal. We also have $Z(\EC)\simeq Z(\fun_\EC(\EM, \EM)^\rev)$ for an indecomposable $\EC$-module $\EM$. 

More generally, let $\EB$ be a fusion subcategory of a fusion category $\EC$. There is a notion of relative center $Z_\EB(\EC)$ consisting of pair $(x,z_{x,-})$, where $x\in \EC$ and the natural isomorphism $z_{x,-}: (x\otimes -)|_\EB \to (-\otimes x)|_\EB$ is the half-braiding. Another convenient way to characterize the relative center $Z_\EB(\EC)$ is the category $\Fun_{\EB|\EB}(\EB, \EC)$ of $\EB$-bimodule functors from $\EB$ to $\EC$, or equivalently, the category $\Fun_{\EB|\EC}(\EC, \EC)$ of right exact $\EB$-$\EC$-bimodule functors from $\EC$ to $\EC$ \cite{kz}. We have $\fpdim(Z_\EB(\EC))=\fpdim(\EB)\fpdim(\EC)$ and $Z(Z_\EB(\EC)) \simeq Z(\EB) \boxtimes Z(\EC)$ \cite{dgno2}.

\smallskip
Let $\EA$ be a fusion category and $\EC$ a braided fusion category. A monoidal functor $F: \EC \to \EA$ is called {\it central} if it factors uniquely through the forgetful functor $\fr: Z(\EA) \to \EA$. Namely, there is a unique braided monoidal functor $F_0: \EC \to Z(\EA)$ such that $\fr \circ F_0\simeq F$. A more direct way to characterize a central functor $F: \EC \to \EA$ is that there is a half-braiding $z_{c,x}: F(c)\otimes x \to x\otimes F(c)$ for $c\in \EC, x\in \EA$ such that $z_{c,F(d)}=F(c_{c,d})$ and $z_{c\otimes d, x}=z_{d,x}\circ z_{c,x}$, where $z_{d,x}\circ z_{c,x}:=(\id_c\otimes z_{d,x})\circ (z_{c,x}\otimes \id_d)$. 

\smallskip
Let $\EA$ be a full subcategory of a braided fusion category $\EC$. The M\"{u}ger centralizer of $\EA$ in $\EC$, denoted by $\EA'|_\EC$, is defined by the full subcategory consisting of objects $x \in \EC$ such that $c_{y,x}\circ c_{x,y} = \id_{x\otimes y}$ for all $y\in \EA$ \cite{mueger2}. Note that the M\"{u}ger centralizer $\EA'|_\EC$ is automatically a fusion subcategory of $\EC$ even if $\EA$ is not monoidal. We abbreviate $\EC'|_\EC$ to $\EC'$ and refer to it as the M\"{u}ger center of $\EC$. We have the following identity \cite{dgno2}:
\be \label{eq:fpdim-centralizer}
\fpdim(\EA)\fpdim(\EA'|_\EC) = \fpdim(\EC) \fpdim(\EA \cap \EC').
\ee
A braided fusion category $\EC$ is called {\it non-degenerate} if $\EC'=\vect$. 

If $\EC$ is braided, then there is a canonical braided monoidal functor $\EC \boxtimes \overline{\EC} \hookrightarrow Z(\EC)$ defined by $x\boxtimes y \mapsto (x, c_{x,-})\otimes (y,c_{-,y}^{-1})$. It is an equivalence iff $\EC$ is non-degenerate.

\subsection{Algebras in a braided fusion category}  \label{sec:alg-bfc}

Let $\EC$ be a fusion category. An {\it algebra in $\EC$} (or a {\it $\EC$-algebra}) is a triple $(A, m, \eta)$, where $A$ is an object in $\EC$, $m$ is a morphism $A\otimes A\to A$ and $\eta: \one \to A$ satisfying the identities 
$m \circ (m \otimes \id_A) \circ \alpha_{A,A,A} = m \circ (\id_A \otimes m)$ and 
$m \circ (\eta \otimes \id_A) = \id_A = m \circ (\id_A \otimes \eta)$. If $\EC$ is also braided, the $\EC$-algebra $A$ is called {\it commutative} if $m = m \circ c_{A,A}$. A right $A$-module, is a pair $(M, \mu_M)$, where $M$ is an object in $\EC$ and $\mu_M: M\otimes A \to M$ such that 
$\mu_M \circ (\id_M \otimes m) = \mu_M \circ (\mu_M \otimes \id_A) \circ \alpha_{M,A,A}$
and $\mu_M \circ (\id_M \otimes \eta) =\id_M$. The definition of a left $A$-module is similar.

We denote the category of right $A$-modules in $\EC$ by $\EC_A$. Let $\EB$ be a fusion subcategory of the fusion category $\EC$ and $A$ a $\EC$-algebra. We denote the maximal subobject of $A$ in $\EB$ by $A\cap \EB$. 
\begin{lem} \label{lem:embed}
Let $\EB$ be a fusion subcategory of a fusion category $\EC$ and $A$ a $\EC$-algebra such that $A\cap \EB=\one$. Then the functor $-\otimes A: \EB \to \EC_A$ is fully faithful. 
\end{lem}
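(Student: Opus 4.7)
The plan is to reduce the claim to a computation of $\mathrm{Hom}$-spaces using the free/forgetful adjunction together with duality in $\EC$, and then to extract the hypothesis $A\cap\EB=\one$ at the very last step.

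First I would use the standard free-module adjunction: for any object $X\in\EC$ and any right $A$-module $M\in\EC_A$,
\[
\mathrm{Hom}_{\EC_A}(X\otimes A,\,M)\,\simeq\,\mathrm{Hom}_\EC(X,M).
\]
Applying this with $X=b$ and $M=b'\otimes A$ (with its obvious right $A$-action by multiplication on the $A$-factor) gives
\[
\mathrm{Hom}_{\EC_A}(b\otimes A,\,b'\otimes A)\,\simeq\,\mathrm{Hom}_\EC(b,\,b'\otimes A).
\]
Then, using the rigidity of $\EC$ (duals exist and $\EB$ is closed under duals since it is a fusion subcategory), I would rewrite this as
\[
\mathrm{Hom}_\EC(b,\,b'\otimes A)\,\simeq\,\mathrm{Hom}_\EC\bigl((b')^{\ast}\otimes b,\,A\bigr).
\]
Since $b,b'\in\EB$, the object $x:=(b')^{\ast}\otimes b$ lies in $\EB$, so what remains is to show that for every $x\in\EB$ one has $\mathrm{Hom}_\EC(x,A)\simeq\mathrm{Hom}_\EB(x,\one)$.

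The core step is this last identification, where $A\cap\EB=\one$ is used. Because $\EC$ is semisimple, decompose $A$ into its $\EB$-isotypic part and its complement: $A\simeq(A\cap\EB)\oplus A^{\perp}$, where $A^{\perp}$ contains no simple summand isomorphic to an object of $\EB$. For a simple $x\in\EB$, any morphism $x\to A^{\perp}$ vanishes, so
\[
\mathrm{Hom}_\EC(x,A)=\mathrm{Hom}_\EC(x,A\cap\EB)=\mathrm{Hom}_\EB(x,A\cap\EB),
\]
the latter equality because $\EB\subset\EC$ is full. By hypothesis $A\cap\EB=\one$, and the statement extends from simples to all of $\EB$ by additivity. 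Composing the three isomorphisms above yields
\[
\mathrm{Hom}_{\EC_A}(b\otimes A,\,b'\otimes A)\,\simeq\,\mathrm{Hom}_\EB\bigl((b')^{\ast}\otimes b,\,\one\bigr)\,\simeq\,\mathrm{Hom}_\EB(b,b'),
\]
and checking that the composite coincides with the map induced by the functor $-\otimes A$ finishes the argument.

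The only genuinely substantive step is the middle one: making precise that $A\cap\EB$ is well defined as a subobject and that morphisms from objects of $\EB$ into $A$ automatically land in this subobject. This is where semisimplicity of $\EC$ is essential and where the hypothesis $A\cap\EB=\one$ enters; the two outer steps are purely formal consequences of the free-module adjunction and rigidity.
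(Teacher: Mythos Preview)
Your proof is correct and follows essentially the same route as the paper: the paper's one-line proof chains the identities $\hom_{\EC_A}(x\otimes A, y\otimes A)\simeq \hom_\EC(x,y\otimes A) \simeq \hom_\EB(y^\ast\otimes x, A\cap \EB)\simeq \hom_\EB(y^\ast\otimes x, \one) \simeq \hom_\EB(x,y)$, which is exactly your free-module adjunction, rigidity, the $A\cap\EB=\one$ step, and rigidity again. Your version merely spells out the semisimple decomposition behind the passage $\hom_\EC(x,A)\simeq\hom_\EB(x,A\cap\EB)$ more explicitly than the paper does.
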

\pf
This follows from the identities $\hom_{\EC_A}(x\otimes A, y\otimes A)\simeq \hom_\EC(x,y\otimes A) \simeq \hom_\EB(y^\ast\otimes x, A\cap \EB)\simeq \hom_\EB(y^\ast\otimes x, \one) \simeq \hom_\EB(x,y)$ for $x,y\in \EB$.  
\epf

A $\EC$-algebra $(A, m, \eta)$ is called {\it separable} if $m: A\otimes A \to A$ splits as a morphism of $A$-bimodule. Namely, there is an $A$-bimodule map $e: A\to A\otimes A$ such that $m \circ e = \id_A$. A separable algebra is called {\it connected} if $\dim \hom_\EC(\one, A) = 1$. If $\EC$ is braided, a commutative separable $\EC$-algebra is also called {\it \'{e}tale algebra} in \cite{dmno}. We abbreviate a connected commutative separable $\EC$-algebra to a {\it condensable algebra} for its physical meaning in anyon condensation \cite{kong}. If $\EC$ is non-degenerate, a condensable $\EC$-algebra $A$ is called {\it Lagrangian} if $\fpdim(A)^2=\fpdim(\EC)$.

\begin{rema} {\rm
Let $\EB$ be a fusion subcategory of fusion category $\EC$. If $A$ is a separable algebra in $\EC$, then $A\cap \EB$ is an separable algebra, which is just the internal hom $\underline{\hom}_\EB(A,A)$ \cite[Lemma\,3.2]{dno}. If $\EC$ is a braided and $A$ is \'{e}tale, then $A\cap \EB$ is an \'{e}tale algebra \cite[Cor.\,3.3]{dno}. 
}
\end{rema}

Let $\EC$ be a braided fusion category and $A$ a condensable $\EC$-algebra. The category $\EC_A$ of $A$-modules is a fusion category and we have the following identity: 
\be \label{eq:fpdim-A}
\fpdim(\EC_A) = \frac{\fpdim(\EC)}{\fpdim(A)}. 
\ee

If $\EC$ is braided and $A$ is commutative, a right $A$-module is called {\it local} if $\mu_M\circ c_{A,M} \circ c_{M,A}=\mu_M$. We denote the fusion subcategory of $\EC_A$ consisting of local $A$-modules in $\EC$ by $\EC_A^0$, which is actually a braided fusion category with the braidings inherited from those in $\EC$. If $\EC$ is non-degenerate, so is $\EC_A^0$ \cite{bek,ko}, and we have the following identities \cite{dmno}
\be \label{eq:ZC_A}
Z(\EC_A) \simeq \EC \boxtimes \overline{\EC_A^0} \quad\quad \mbox{and} \quad\quad \fpdim(\EC_A^0) = \frac{\fpdim(\EC)}{\fpdim(A)^2}.
\ee  
If, in addition, $A$ is Lagrangian, we have $\EC_A^0=\vect$. Moreover, if a condensable algebra $A$ contains a condensable subalgebra $B$, then $A$ is also a condensable algebra in the category $\EC_B^0$ and $\fpdim_{\EC_B^0}A=\frac{\fpdim(A)}{\fpdim(B)}$. We have $(\EC_B)_A\simeq \EC_A$ and $(\EC_B^0)_A^0\simeq \EC_A^0$ \cite{ffrs,da}. 

\medskip
It turns out that condensable algebras in a braided fusion category $\EC$ all arise in the following ways. 
\begin{thm} {\rm \cite{dmno}} \label{thm:dmno-1}
Let $\ED$ be a fusion category, $F: \EC \to \ED$ a central functor and $F^\vee: \ED \to \EC$ the right adjoint of $F$. The object $A=F^\vee(\one_\ED)$ has a canonical structure of condensable $\EC$-algebra, and the functor $F^\vee$ defines a monoidal equivalence $F^\vee: \mathrm{Im}(F) \to \EC_A$.  
\end{thm}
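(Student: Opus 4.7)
The plan is to extract every piece of structure from the unit and counit of the adjunction $F \dashv F^\vee$, using centrality of $F$ at two pivotal points: commutativity of $A$ and a projection formula that upgrades $F^\vee$ to a monoidal equivalence onto $\EC_A$. Since $\EC$ and $\ED$ are fusion, $F^\vee$ exists and is exact, and the standard mates-under-adjunction construction endows $F^\vee$ with a canonical lax monoidal structure. This gives $A = F^\vee(\one_\ED)$ an algebra structure: the unit $\eta_A$ is the mate of $\id_{\one_\ED}$ (using $F(\one_\EC) \simeq \one_\ED$), and the multiplication $m$ is the mate of $F(A \otimes A) \simeq F(A) \otimes F(A) \xrightarrow{\epsilon \otimes \epsilon} \one_\ED$. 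Associativity and unitality are routine diagram chases.

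Commutativity is where centrality enters first. Writing $F = fr \circ F_0$ with $F_0\colon \EC \to Z(\ED)$ braided monoidal, I would rerun the mate construction inside the braided category $Z(\ED)$; the resulting multiplication then satisfies $m \circ c_{A,A} = m$ in $\EC$. Connectedness is immediate from adjunction: $\hom_\EC(\one_\EC, A) \simeq \hom_\ED(\one_\ED, \one_\ED) = \Cb$. I would defer separability to the end, deducing it from the equivalence $\EC_A \simeq \mathrm{Im}(F)$ together with semisimplicity of $\mathrm{Im}(F) \subset \ED$. Meanwhile, the lax monoidal structure promotes $F^\vee$ to a functor $\tilde F^\vee\colon \ED \to \EC_A$ by equipping each $F^\vee(d)$ with the $A$-action $F^\vee(d) \otimes A \to F^\vee(d \otimes \one_\ED) = F^\vee(d)$.

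The main technical input is the projection formula
\[
F^\vee\bigl(F(c) \otimes d\bigr) \simeq c \otimes F^\vee(d), \qquad c \in \EC, \ d \in \ED,
\]
obtained as the mate of $F(c \otimes F^\vee(d)) \simeq F(c) \otimes F F^\vee(d) \xrightarrow{\id \otimes \epsilon} F(c) \otimes d$; invertibility rests crucially on the half-braiding $z_{c,-}$, which turns $F^\vee$ into a genuine $\EC$-bimodule functor rather than merely lax monoidal. Specialising $d = \one_\ED$ yields $F^\vee(F(c)) \simeq c \otimes A$, identifying the essential image of $\tilde F^\vee|_{\mathrm{Im}(F)}$ with the free $A$-modules. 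Fully faithfulness on $\mathrm{Im}(F)$ then follows from the projection formula and adjunction,
\[
\hom_{\EC_A}(c_1 \otimes A, c_2 \otimes A) \simeq \hom_\EC(c_1, c_2 \otimes A) \simeq \hom_\ED\bigl(F(c_1), F(c_2)\bigr),
\]
while monoidality is another application of the same formula, giving $F^\vee(d_1) \otimes_A F^\vee(d_2) \simeq F^\vee(d_1 \otimes d_2)$.

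For essential surjectivity onto all of $\EC_A$, the projection formula identifies the monad $F^\vee F$ on $\EC$ with the free-module monad $-\otimes A$, so the Eilenberg-Moore category of $-\otimes A$ (which is $\EC_A$) coincides with that of $F^\vee F$, and standard monadicity realises the latter as the essential image of $\tilde F^\vee$ restricted to $\mathrm{Im}(F)$. With the equivalence in hand, $\EC_A$ inherits semisimplicity from $\mathrm{Im}(F)$, which forces $A$ to be separable and completes its identification as a condensable algebra. The main obstacle throughout is the projection formula and the verification that the constructed isomorphism is $A$-bilinear; centrality of $F$ is exactly the hypothesis that allows this, and without it neither commutativity of $A$ nor the monoidal equivalence onto $\EC_A$ would persist.
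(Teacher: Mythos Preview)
The paper does not prove this theorem; it is stated as a citation from \cite{dmno} (Davydov--M\"uger--Nikshych--Ostrik, Lemma~3.5). Your argument is essentially the standard one given there: lax monoidality of the right adjoint produces the algebra structure, centrality yields commutativity and the projection formula, and the projection formula identifies the monad $F^\vee F$ with $-\otimes A$, from which monadicity gives the equivalence $\mathrm{Im}(F)\simeq \EC_A$. Deferring separability until after the equivalence and reading it off from semisimplicity of $\mathrm{Im}(F)$ is also how the original reference proceeds. The sketch is correct; the only step you should be prepared to justify more carefully if pressed is the applicability of Barr--Beck to $F^\vee|_{\mathrm{Im}(F)}$ (conservativity and reflexive-coequalizer preservation), but in the fusion setting these hold for elementary reasons.
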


Let $\{ A\}$ be the full subcategory of $\EC$ consisting of a single object $A$. 
\begin{prop} \label{prop:embed}
Let $\EB$ be a fusion subcategory of a braided fusion category $\EC$ and $A$ a condensable $\EC$-algebra such that $A\cap \EB=\one$. The functor $-\otimes A: \{ A\}'|_\EC \cap \EB \to \EC_A^0$ is fully faithful and braided monoidal. 
\end{prop}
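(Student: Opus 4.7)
The plan is to verify three things in sequence: (a) the functor $-\otimes A$ actually lands in $\EC_A^0$ (not merely $\EC_A$); (b) it is fully faithful; (c) it carries a canonical braided monoidal structure. Fully faithfulness (b) is essentially free: since $\{A\}'|_\EC \cap \EB$ is a full subcategory of $\EB$ and the hypothesis $A\cap \EB = \one$ is inherited, Lemma \ref{lem:embed} immediately gives fully faithfulness after restriction.

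For (a), I would take $x \in \{A\}'|_\EC$ and equip $x\otimes A$ with its canonical right $A$-module action $\mu_{x\otimes A} = \id_x \otimes m$. The locality condition $\mu_{x\otimes A} \circ c_{A, x\otimes A}\circ c_{x\otimes A, A} = \mu_{x\otimes A}$ is verified by expanding the two mixed braidings via the hexagon axioms into tensor products of $c_{x,A}, c_{A,x}, c_{A,A}, c_{A,A}^{-1}$; the centralization hypothesis $c_{A,x}\circ c_{x,A}=\id_{x\otimes A}$ collapses the $x$-factor, and the commutativity identity $m\circ c_{A,A}=m$ collapses the remaining $A$-factor. So $-\otimes A$ lands in $\EC_A^0$.

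For (c), I would use the well-known fact that when $A$ is commutative the free-module functor $F:\EC\to \EC_A$, $x\mapsto x\otimes A$, is monoidal: the isomorphism $(x\otimes A)\otimes_A (y\otimes A)\simeq (x\otimes y)\otimes A$ is induced by $(\id_{x\otimes y}\otimes m)\circ (\id_x\otimes c_{A,y}\otimes \id_A)$, and coherence is standard. Restricting to $\{A\}'|_\EC \cap \EB$ yields a monoidal functor into $\EC_A^0$. The hardest step is to check that this monoidal functor is braided with respect to the induced braiding on $\EC_A^0$ (well-defined precisely because of locality): concretely, one must verify that under the identification $F(x)\otimes_A F(y)\simeq (x\otimes y)\otimes A$ the induced braiding $c^A_{F(x),F(y)}$ corresponds to $c_{x,y}\otimes \id_A = F(c_{x,y})$. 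I expect this to reduce, after a hexagon expansion, to the cancellation of $c_{A,x}\circ c_{x,A}$ (from $x \in \{A\}'|_\EC$), the cancellation of $c_{A,y}\circ c_{y,A}$ (from $y\in\{A\}'|_\EC$), together with $m\circ c_{A,A}=m$. The main obstacle is purely bookkeeping: fixing conventions for the induced braiding on local modules and tracing the diagram carefully enough that the centralization of $A$ by both $x$ and $y$ is visibly the only ingredient needed to match $F(c_{x,y})$.
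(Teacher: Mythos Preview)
Your proposal is correct and follows essentially the same approach as the paper: the paper's proof is a terse three-line argument that (i) $x\otimes A$ is local iff $x\in\{A\}'|_\EC$, (ii) fully faithfulness follows from Lemma~\ref{lem:embed}, and (iii) the functor ``is clearly braided monoidal''; you have simply unpacked the diagram-chasing behind (i) and (iii) that the paper leaves implicit.
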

\pf
The functor $-\otimes A$ maps $\{ A\}'|_\EC$ into $\EC_A^0$ because $x\otimes A$ is a local $A$-module iff $x\in \{ A\}'|_\EC$. The fully-faithfulness follows from Lem.\,\ref{lem:embed}.  It is clearly braided monoidal. 
\epf

In this work, we are interested in condensable subalgebras of a condensable algebra in a non-degenerate braided fusion category $\EC$. Let $A$ be a condensable algebra in $\EC$. Let $L(\EC, A)$ be the lattice of condensable subalgebras of $A$ in $\EC$ and $\EL(\EC, A)$ the lattice of fusion subcategories of $\EC_A$ that contain $\EC_A^0$ as a fusion subcategory. The following theorem slightly generalizes Theorem~4.10 in \cite{dmno}.    
\begin{thm} \label{thm:dmno}
There is a canonical anti-isomorphism of lattices $\varphi: L(\EC, A)\simeq \EL(\EC, A)$. More precisely, for a condensable subalgebra $B$ of $A$, $\varphi(B)$ is defined by the image of the following functor 
$$
F_B: \EC_B^0 \boxtimes \overline{\EC_A^0} \xrightarrow{(-\otimes_B A)\boxtimes \id} \EC_A\boxtimes \overline{\EC_A^0} \xrightarrow{\otimes} \EC_A\, .
$$ 
Moreover, we have 
\bnu
\item $Z(\varphi(B)) \simeq \EC_B^0\boxtimes \overline{\EC_A^0}$ as non-degenerate braided fusion categories and the functor 
$Z(\varphi(B)) \simeq \EC_B^0 \boxtimes \overline{\EC_A^0} \xrightarrow{F_B} \varphi(B)$ 
coincides with the forgetful functor. 
\item For a fusion subcategory $\EB \subset \EC_A$, let $Z_\EB(\EC_A)$ be the relative center and $I: Z_\EB(\EA) \to Z(\EC_A)$ the right adjoint functor of the forgetful functor. Then we have $\varphi^{-1}(\EB)\simeq I(\one)$. 
\item $\fpdim(B)\fpdim(\varphi(B))=\fpdim(\EC_A)$. 
\enu
\end{thm}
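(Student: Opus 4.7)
The plan is to adapt the proof of \cite[Thm.~4.10]{dmno} from the Lagrangian case to an arbitrary condensable $A$, tracking the extra factor $\overline{\EC_A^0}$ throughout. First I would verify that $F_B$ is a braided central functor from the non-degenerate category $\EC_B^0\boxtimes\overline{\EC_A^0}$ to $\EC_A$. Since $B\hookrightarrow A$ is a condensable subalgebra of $\EC$, the algebra $A$ becomes a condensable algebra in $\EC_B^0$ with $(\EC_B^0)_A\simeq \EC_A$ and $(\EC_B^0)_A^0\simeq \EC_A^0$, so the induction functor $-\otimes_B A:\EC_B^0\to \EC_A$ is central by Thm.~\ref{thm:dmno-1} applied inside $\EC_B^0$. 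Composing with the canonical braided embedding $\overline{\EC_A^0}\hookrightarrow Z(\EC_A)$ provided by \eqref{eq:ZC_A} and the forgetful $Z(\EC_A)\to \EC_A$ realizes $F_B$ as a central functor factoring through a braided monoidal functor $\EC_B^0\boxtimes\overline{\EC_A^0}\to Z(\EC_A)$. Theorem~\ref{thm:dmno-1} then presents $\varphi(B):=\mathrm{Im}(F_B)$ as $(\EC_B^0\boxtimes\overline{\EC_A^0})_{F_B^\vee(\one)}$, and a Frobenius-Perron computation using \eqref{eq:fpdim-A} together with $\fpdim(\EC_B^0)=\fpdim(\EC)/\fpdim(B)^2$ and $\fpdim(\EC_A^0)=\fpdim(\EC)/\fpdim(A)^2$ (both instances of \eqref{eq:ZC_A}) yields item (iii).

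Next I would construct the inverse map $\varphi^{-1}$. Given $\EB\in\EL(\EC,A)$, let $U:Z(\EC_A)\to Z_\EB(\EC_A)$ be the forgetful functor restricting the half-braiding data from $\EC_A$ down to $\EB$, and let $I$ be its right adjoint; lax monoidality of $I$ endows $I(\one)$ with a commutative separable algebra structure in $Z(\EC_A)\simeq \EC\boxtimes\overline{\EC_A^0}$. The key step, generalizing \cite[Thm.~4.10]{dmno}, is to show that the hypothesis $\EC_A^0\subset \EB$ forces $I(\one)$ to lie in the subcategory $\EC\boxtimes\one$: the second tensorand $\overline{\EC_A^0}$ of $Z(\EC_A)$ encodes exactly the half-braiding with $\EC_A^0$, and this data becomes redundant once $\EC_A^0\subset \EB$, since every object of $Z_\EB(\EC_A)$ already carries such a half-braiding. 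The resulting connected commutative separable algebra $B:=I(\one)\in \EC$ is identified with a subalgebra of $A$ via the counit $UI(\one)\to \one_{Z_\EB(\EC_A)}=A$, giving item (ii).

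Finally I would verify the two maps are mutually inverse and anti-monotone. A Frobenius-Perron dimension count using (iii) and the identity $\fpdim(Z_\EB(\EC_A))=\fpdim(\EB)\fpdim(\EC_A)$ shows that the natural comparison morphisms $B\to \varphi^{-1}(\varphi(B))$ and $\varphi(\varphi^{-1}(\EB))\to \EB$ preserve Frobenius-Perron dimensions and are therefore equivalences. Anti-monotonicity $B_1\subset B_2\Leftrightarrow \varphi(B_1)\supset \varphi(B_2)$ is immediate from the defining formula of $\varphi$, since enlarging $B$ tightens the locality condition and shrinks $\EC_B^0$. Item (i) then follows by checking that $F_B^\vee(\one)$ is Lagrangian inside the non-degenerate braided fusion category $\EC_B^0\boxtimes\overline{\EC_A^0}$; \eqref{eq:ZC_A} then gives $Z(\varphi(B))\simeq Z((\EC_B^0\boxtimes\overline{\EC_A^0})_{F_B^\vee(\one)})\simeq \EC_B^0\boxtimes\overline{\EC_A^0}$, and by construction this equivalence intertwines $F_B$ with the forgetful functor. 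The main obstacle I expect is the rigorous identification of $I(\one)$ as a condensable subalgebra of $A$ living in the factor $\EC\subset \EC\boxtimes\overline{\EC_A^0}$; in the Lagrangian case of \cite{dmno} one has $\overline{\EC_A^0}=\vect$ so this subtlety is invisible, and it is precisely this step that constitutes the genuine generalization.
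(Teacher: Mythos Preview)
Your approach is sound but takes a genuinely different route from the paper. You propose to redo the proof of \cite[Thm.~4.10]{dmno} in the non-Lagrangian setting, carrying the extra factor $\overline{\EC_A^0}$ through every step and confronting directly the obstacle you correctly identify: showing that $I(\one)$ lands in $\EC\boxtimes\one\subset Z(\EC_A)$. The paper instead \emph{reduces} to the Lagrangian case rather than generalizing it. It passes to the non-degenerate category $Z(\EC_A)\simeq \EC\boxtimes\overline{\EC_A^0}$, where the Lagrangian algebra $\tilde A:=\fr^\vee(\one)$ satisfies $Z(\EC_A)_{\tilde A}\simeq \EC_A$ and $A=\tilde A\cap(\EC\boxtimes\one)$. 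Then $L(\EC,A)$ is identified with the sublattice $L(Z(\EC_A),A\boxtimes\one)\subset L(Z(\EC_A),\tilde A)$, and one checks that the already-established anti-isomorphism $\phi$ from \cite[Thm.~4.10]{dmno} satisfies $\phi(B\boxtimes\one)=\varphi(B)$ and carries this sublattice onto $\EL(\EC,A)$ (the latter because $\mathrm{Im}(F_A)=\EC_A^0$). This bypasses your main obstacle entirely: since $\phi$ is a bijection in the Lagrangian case, its restriction to the sublattice is automatically a bijection onto its image, and no separate argument that $I(\one)\in\EC\boxtimes\one$ is needed. Your direct approach would work but is considerably more labor; the paper's reduction is the cleaner path.
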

\begin{proof}
When $A$ is a Lagrangian algebra, the result was proved in Theorem~4.10 in \cite{dmno} (with part 1 and 2 appeared in the proof of Theorem~4.10 in \cite{dmno}). 

In general cases, let $fr^\vee$ be the right adjoint of the forgetful functor $fr: Z(\EC_A)\to \EC_A$. The algebra $\tilde{A}=fr^\vee(\one)$ is Lagrangian, and we have $Z(\EC_A)_{\tilde{A}}\simeq \EC_A$ and $Z(\EC_A)_{\tilde{A}}^0\simeq\vect$. By Theorem~4.10 in \cite{dmno}, there is an anti-isomorphism $\phi$ from the lattice $L(Z(\EC_A), \tilde{A})$ of condensable subalgebras of $\tilde{A}$ to the lattice $\EL(Z(\EC_A), \tilde{A})$ of fusion subcategories of $\EC_A$. We have $Z(\EC_A)\simeq\EC\boxtimes \overline{\EC_A^0}$ as braided fusion categories, and the forgetful functor can be identified with the composed functor 
$$
Z(\EC_A)\simeq \EC\boxtimes \overline{\EC_A^0} \xrightarrow{(-\otimes A)\boxtimes \id } \EC_A\boxtimes \overline{\EC_A^0} \xrightarrow{\otimes} \EC_A.
$$ 
We have $A=\tilde{A} \cap (\EC \boxtimes \one)$. Namely, $A$ is a condensable subalgebra of $\tilde{A}$ in $Z(\EC_A)$. Let $B$ be a condensable subalgebra of $A$ in $\EC$. According to Theorem~4.10 in \cite{dmno}, the fusion subcategory $\phi(B\boxtimes \one)$ of $\EC_A$ can be identified with the image of the functor $F_B$. 
Therefore, we have $\phi(B\boxtimes \one)=\varphi(B)$. Moreover, the image of the map $\phi$ restricted to the sub-lattice $L(\EC,A)=L(Z(\EC_A), A\boxtimes \one)$ of $L(Z(\EC_A), \tilde{A})$ is just the sub-lattice $\EL(\EC,A)$ of $\EL(Z(\EC_A), \tilde{A})$ because $\mathrm{Im}(F_A)=\EC_A^0$. Therefore, $\varphi=\phi|_{L(\EC,A)}$ is an anti-isomorphism from $L(\EC,A)$ to $\EL(\EC,A)$. The rest is clear.
\end{proof}

As an example, we give the following Proposition, which is an immediate consequence of Thm.\,\ref{thm:dmno} and Thm.\,\ref{thm:KO} and can be found in \cite[Example\,4.11]{dmno}. 
\begin{prop} \label{expl:G-H}
Take $\EC=Z(\vect_G^\omega)$ for a finite group $G$ and a 3-cocycle $\omega \in H^3(G,U(1))$. The forgetful functor $\fr: Z(\vect_G^\omega) \to \vect_G^\omega$ determines a fusion subcategory $\rep(G) \hookrightarrow Z(\vect_G^\omega)$, which is nothing but the preimage of the direct sums of the tensor unit $\one_{\vect_G^\omega}$. The Lagrangian algebra $A=\fr^\vee(\one_{\vect_G^\omega})$ is nothing but
the algebra $\fun(G)$ of functions on $G$ in $\rep(G)$. We have
\bnu
\item the condensable subalgebras of $A$ are given by $\fun(G/H)$ for subgroups $H\subset G$, 
\item the fusion subcategories of $\vect_G^\omega$ are $\vect_H^{\omega|_H}$,
\enu
and $\varphi(\fun(G/H))=\vect_H^{\omega|_H}$. 
\end{prop}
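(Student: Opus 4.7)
My plan is to verify each identification in order: first the embedding $\rep(G) \hookrightarrow Z(\vect_G^\omega)$ with its Lagrangian algebra, then the two lattices, and finally the bijection $\varphi$ as the specialization of Theorem~\ref{thm:dmno}.

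First I would unpack the forgetful functor $\fr: Z(\vect_G^\omega)\to \vect_G^\omega$ directly from the half-braiding description. An object $(X,z)\in Z(\vect_G^\omega)$ whose underlying object $X$ is a direct sum of copies of $\one_{\vect_G^\omega}$ is a vector space $V$ concentrated in trivial $G$-grading, and the half-braidings $z_{V,g}: V\otimes \mathbb{C}_g \to \mathbb{C}_g\otimes V$ are equivalent to endomorphisms $\rho(g):V\to V$; the hexagon axioms force $\rho$ to be a group representation, with $\omega$ entering trivially since it acts on the grading-$e$ component through a 3-cocycle constant equal to $1$. This gives the embedding $\rep(G)\hookrightarrow Z(\vect_G^\omega)$. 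Next, I would compute $\fr^\vee(\one_{\vect_G^\omega})$ by using $\hom_{Z(\vect_G^\omega)}(-,\fr^\vee(\one))=\hom_{\vect_G^\omega}(\fr(-),\one)$ and evaluating on simple objects; as an object of $\vect_G^\omega$ it is $\bigoplus_{g\in G}\mathbb{C}_g$ with the regular half-braiding, which is exactly the function algebra $\fun(G)$ in $\rep(G)$. Lagrangianness follows from $\fpdim(\fun(G))=|G|=\sqrt{\fpdim(Z(\vect_G^\omega))}$.

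Next, for part~(1), I would classify condensable subalgebras of $\fun(G)$ in $\rep(G)$. These are the same as (commutative, separable, connected) quotient $G$-sets of $G$ itself; by the classical correspondence between transitive $G$-sets and conjugacy classes of subgroups, the connected subalgebras of $\fun(G)$ are exactly $\fun(G/H)$ for subgroups $H\subset G$, embedded as the $H$-invariant functions. (Non-connected separable commutative subalgebras could be listed similarly, but condensability forces connectedness.) For part~(2), the classification of fusion subcategories of $\vect_G^\omega$ is immediate from the grading: simple objects are $\{\mathbb{C}_g\}_{g\in G}$, fusion closure forces the indexing set of simples to be a subgroup $H\subset G$, and the associator on that sub-category is $\omega|_{H\times H\times H}=\omega|_H$, giving $\vect_H^{\omega|_H}$. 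A dimension check via $\fpdim(\vect_H^{\omega|_H})=|H|$ and $\fpdim(\fun(G/H))=|G|/|H|$ confirms the relation $\fpdim(B)\fpdim(\varphi(B))=\fpdim(\EC_A)=|G|$ predicted by Theorem~\ref{thm:dmno}(3).

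Finally, I would establish $\varphi(\fun(G/H))=\vect_H^{\omega|_H}$ by computing the image of $F_{\fun(G/H)}:\EC_B^0\boxtimes\overline{\EC_A^0}\to \EC_A$ from Theorem~\ref{thm:dmno}. Here $\EC_A^0=\vect$, so this reduces to showing $(-\otimes_B A): \EC_B^0\to \EC_A$ has image $\vect_H^{\omega|_H}$. Using the identification $\EC_A=Z(\vect_G^\omega)_{\fun(G)}\simeq \vect_G^\omega$ (via the standard equivalence — this is the point where Theorem~\ref{thm:KO} is invoked), and the analogous identification $\EC_B\simeq \vect_{G,H}^\omega$ with local modules corresponding to $H$-graded objects, the functor becomes the obvious inclusion $\vect_H^{\omega|_H}\hookrightarrow \vect_G^\omega$. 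The main technical obstacle I anticipate is making the identifications $\EC_A\simeq \vect_G^\omega$ and its restriction to $\EC_B\simeq \vect_{G,H}^\omega$ fully compatible with the centers and the tensor-over-$B$ construction; once that diagram chase is set up cleanly (as in the proof of Theorem~\ref{thm:dmno}), the statement $\varphi(\fun(G/H))=\vect_H^{\omega|_H}$ follows essentially by tracking simple objects. Lattice anti-isomorphy and the Frobenius–Perron dimension identity then come for free from Theorem~\ref{thm:dmno}.
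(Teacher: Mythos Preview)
Your approach is essentially the paper's: it states the proposition as an immediate consequence of Theorem~\ref{thm:dmno} and Theorem~\ref{thm:KO} (referring to \cite[Example~4.11]{dmno} for details), and you are simply filling in those details.

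One slip to correct: the underlying object of $A=\fr^\vee(\one_{\vect_G^\omega})$ in $\vect_G^\omega$ is $|G|\cdot\one_{\vect_G^\omega}$, not $\bigoplus_{g\in G}\Cb_g$. By your own description of the embedding $\rep(G)\hookrightarrow Z(\vect_G^\omega)$, anything in $\rep(G)$ must forget to a direct sum of copies of the tensor unit; and your adjunction computation $\hom_{\vect_G^\omega}(\fr(-),\one)$ already shows that only simple objects supported in degree $e$ map nontrivially to $A$. The object $\bigoplus_{g}\Cb_g$ is a \emph{different} Lagrangian algebra (present when $\omega$ is trivial), coming from the other realization $Z(\vect_G)\simeq Z(\rep(G))\to\rep(G)$. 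This slip does not propagate, since your identification of $A$ with $\fun(G)$ in $\rep(G)$ is correct and the rest of the argument proceeds from that.
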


\subsection{Symmetric fusion categories} \label{sec:sfc}

A braided fusion category $\EC$ is called {\it a symmetric fusion category} (SFC) if $\EC'=\EC$. Throughout this work, we use $\EE$ to denote a SFC. 

\smallskip
Let $G$ be a finite group. The category of representations of $G$, denoted by $\Rep(G)$, is a SFC. Such SFC's $\EE$ are characterized by the fact that there is a braided monoidal functor $F:\EE\to \vect$ (unique up to isomorphisms), also called a symmetric fiber functor. Moreover, we have $G\simeq \aut(F)$ as groups iff $\EE \simeq \rep(G)$ as braided fusion categories. In this case, $F$ is just the usual forgetful functor $\rep(G) \to \vect$. 


In this work, we are interested in condensable algebras in $\rep(G)$ for a finite group $G$. The following classification result can be found in \cite{ko}. 

\begin{thm} \label{thm:KO}
Let $\fun(G/H)$ be the algebra of the functions on the coset $G/H$ for a subgroup $H$ (or equivalently, the functions on $G$ that are invariant under the action of $H$ by translations).
\bnu
\item $\fun(G/H)$ is a condensable algebra in $\rep(G)$. 
\item If $A$ is a condensable algebra in $\rep(G)$, then there is a subgroup $H$ such that $A\simeq \fun(G/H)$. Moreover, there is a canonical symmetric monoidal equivalence $\rep(G)_A\simeq \rep(H)$. 
\item The forgetful functor $\rep(G) \to \rep(H)$ (forgetting $g$-actions for $g\notin H$) and the induction functor $\mathrm{Ind}_H^G: \rep(H) \to \rep(G)$ are left and right adjoints of each other. Using $A\simeq \fun(G/H)$ and $\rep(G)_A\simeq \rep(H)$, these two functors can be identified with the functor $-\otimes A: \rep(G) \to \rep(G)_A$ and $\rep(G)_A \xrightarrow{forget} \rep(G)$, respectively. 
\enu
\end{thm}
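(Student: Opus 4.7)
The plan is to combine a direct verification for Part 1 with a Tannakian-reconstruction argument for Part 2, and then to read off Part 3 from the explicit description obtained along the way.

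For Part 1, I would check directly that $\fun(G/H)$, with the $G$-action $(g\cdot f)(xH)=f(g^{-1}xH)$, pointwise multiplication, and unit equal to the constant function $1$, is a commutative algebra object in $\rep(G)$. Connectedness reduces to $\fun(G/H)^G=\Cb$, which holds because $G$ acts transitively on $G/H$. Separability follows from the decomposition $\fun(G/H)=\bigoplus_{xH}\Cb\, e_{xH}$ into mutually orthogonal idempotents indexed by cosets: the $\Cb$-linear map sending $e_{xH}\mapsto e_{xH}\otimes e_{xH}$ is readily checked to be a $G$-equivariant bimodule section of the multiplication, since $G$ permutes the idempotents.

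For Part 2, the central tool is the symmetric fiber functor $F:\rep(G)\to \vect$. Under $F$, a condensable algebra $A$ becomes a commutative, separable, finite-dimensional $\Cb$-algebra equipped with a $G$-action by algebra automorphisms. Such an algebra is necessarily of the form $\fun(X)$ for a finite $G$-set $X$ (the classical Gelfand-type correspondence between finite commutative separable $\Cb$-algebras and finite sets, lifted equivariantly). The connectedness condition $\dim\hom_{\rep(G)}(\one,A)=1$ forces $\fun(X)^G=\Cb$, so $G$ acts transitively on $X$, whence $X\simeq G/H$ for $H$ the stabilizer of a chosen basepoint. For the equivalence $\rep(G)_A\simeq \rep(H)$, I would describe the inverse functors explicitly: given an $A$-module $M$ in $\rep(G)$, the idempotent $e_{eH}\in A$ cuts out a fiber $e_{eH}M$ which carries a natural $H$-representation (since $H$ is exactly the subgroup stabilizing $e_{eH}$); conversely, an $H$-representation $V$ gives the induced $G$-module $\mathrm{Ind}_H^G V$, which has a canonical $\fun(G/H)$-action by pointwise multiplication on cosets. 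Checking that these functors are mutually inverse and that $\otimes_A$ on the left corresponds to the ordinary tensor product of $H$-representations on the right yields the symmetric monoidal equivalence.

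For Part 3, the identifications are immediate from the explicit descriptions above: $V\otimes \fun(G/H)=\bigoplus_{xH}V\, e_{xH}$ has fiber at $eH$ equal to $V$ with the restricted $H$-action, so under $\rep(G)_A\simeq\rep(H)$ the free-module functor $-\otimes A$ is identified with $\mathrm{Res}_H^G$. Similarly, the forgetful functor $\rep(G)_A\to \rep(G)$ sends a $\fun(G/H)$-module to its underlying $G$-representation, which under the equivalence becomes $V\mapsto \mathrm{Ind}_H^G V$. Classical Frobenius reciprocity for finite groups then supplies the two adjunctions between induction and restriction. The main delicate point will be the equivariant Gelfand step: verifying carefully that the classification via $G$-sets produces an equivalence $\rep(G)_A\simeq \rep(H)$ that respects the symmetric monoidal structure and not merely the underlying abelian-categorical one.
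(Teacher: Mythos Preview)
The paper does not actually prove this theorem: it states the result and attributes it to \cite{ko} (Kirillov--Ostrik), so there is no in-paper argument to compare against. Your outline is correct and is essentially the standard proof one finds in the literature: verify the algebra axioms directly for Part~1, use the fiber functor to reduce Part~2 to the classification of commutative separable $G$-algebras over $\Cb$ as function algebras on $G$-sets, and read off Part~3 from the explicit fiber/induction description together with Frobenius reciprocity. The one place to be careful, as you note, is checking that the equivalence $\rep(G)_A\simeq\rep(H)$ is symmetric monoidal and not merely $\Cb$-linear; this comes down to the elementary identification $e_{eH}(M\otimes_A N)\simeq (e_{eH}M)\otimes_\Cb(e_{eH}N)$, which follows because $e_{eH}$ is idempotent and the $A$-action on $M\otimes_A N$ is diagonal.
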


\begin{rema} \label{rema:sym-breaking}{\rm
When the SFC $\rep(G)$ is viewed as the symmetry of a bosonic SPT order, condensing the algebra $\fun(G/H)$ for a subgroup $H\subset G$ should be viewed as breaking the symmetry $G$ to $H$, or equivalently, breaking $\rep(G)$ to $\rep(H)$. 
}
\end{rema}

Note that $\Cb=\fun(G/G)$ is the trivial $\rep(G)$-algebra $\one_{\rep(G)}$. Let $A=\fun(G)$. We have $\rep(G)_A=\rep(G)_A^0\simeq\vect$. Moreover, the free induction functor $-\otimes A: \rep(G) \to \rep(G)_A$ can be identified with the forgetful functor $\fr: \rep(G) \to \vect$. Moreover, the algebra $A=I(\one)$, where $I$ is the right adjoint functor of the forgetful functor $\fr$, is maximal in the sense that $\fun(G/H)$ is a subalgebra of $\fun(G)$ for any subgroup $H$ of $G$.

\smallskip
More generally, according to Deligne \cite{deligne}, a SFC is braided equivalent to $\rep(G,z)$, where $G$ is a finite group, and $z\in G$ is a central element such that $z^2=1$ (see also \cite{egno}). The SFC $\rep(G,z)$ is the same as $\rep(G)$ as fusion categories, but is equipped with a new braiding. More precisely, for $X,Y\in \rep(G,z)$, the new braiding $c_{X,Y}^z: X\otimes Y \to Y\otimes X$ is defined as follows: 
$$
c^z_{X,Y}(x\otimes y) = (-1)^{mn} y\otimes x, 
$$
for all $x\in X, y\in Y$ such that $zx=(-1)^mx, zy=(-1)^ny$, where $m$ and $n$ are either $0$ or $1$.

When $G=\Zb_2$, the SFC $\rep(\Zb_2,z)$ is nothing but the category $\svect$ of super-vector spaces (or $\Zb_2$-graded vector spaces) with $\Zb_2$-graded symmetric braidings.

For any SFC $\EE$, there is always a braided monoidal functor $F: \EE \to \svect$ (unique up to isomorphisms), also called super fiber functor. Let $G=\aut(F)$ be the monoidal natural automorphisms of $F$ and $z\in G$ be the parity automorphism of $F$ (i.e. $z_x: F(x) \to F(x)$ is the parity automorphism on $F(x)$ for $x\in \EE$). Then we have $\EE \simeq \rep(G,z)$. 

\medskip
Let $A$ be a condensable algebra in $\EE$. The category $\EE_A=\EE_A^0$ is automatically a SFC. The free induction functor $-\otimes A: \EE \to \EE_A$ is symmetric monoidal and should be viewed as a symmetry-breaking process (or a condensation). We have the following Lemma as a corollary of  Prop.\,\ref{prop:embed}. 

\begin{lem}
Let $\EF$ be fusion subcategory of $\EE$ and $A$ is a condensable algebra of $\EE$ such that $A\cap \EF=\one$. We have a braided full embedding $-\otimes A: \EF \rightarrow \EE_A$. 
\end{lem}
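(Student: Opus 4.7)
The plan is to deduce this lemma as an immediate specialization of Prop.~\ref{prop:embed}, exploiting the two simplifications that arise when the ambient braided fusion category is symmetric. Concretely, I would apply Prop.~\ref{prop:embed} with the braided fusion category taken to be $\EE$ itself, the fusion subcategory taken to be $\EF$, and the condensable algebra taken to be $A$. The hypothesis $A\cap \EF=\one$ is exactly the hypothesis required by that proposition, so it yields without further work a fully faithful braided monoidal functor
\[
-\otimes A : \{A\}'|_\EE \cap \EF \longrightarrow \EE_A^0.
\]
The remaining task is to recognize that under the symmetry hypothesis on $\EE$, both the source and the target of this functor simplify to the ones appearing in the statement of the lemma.

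For the source, since $\EE$ is symmetric, every object $x\in\EE$ satisfies $c_{A,x}\circ c_{x,A}=\id_{x\otimes A}$, hence $\{A\}'|_\EE=\EE$ and consequently $\{A\}'|_\EE\cap \EF=\EF$. For the target, the same symmetry of the braiding of $\EE$ forces every right $A$-module $(M,\mu_M)$ to automatically satisfy the locality condition $\mu_M\circ c_{A,M}\circ c_{M,A}=\mu_M$, so $\EE_A^0=\EE_A$. Substituting these two identifications into the functor produced by Prop.~\ref{prop:embed} gives exactly the functor $-\otimes A:\EF\to\EE_A$ claimed, and it remains fully faithful and braided monoidal. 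Fully faithfulness in a semisimple setting implies essential injectivity on objects (any isomorphism $F(x)\simeq F(y)$ lifts to an isomorphism $x\simeq y$), so the resulting functor is a braided full embedding in the sense used throughout the paper.

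There is essentially no obstacle beyond unpacking the definitions; the only delicate point to check is that the locality condition is indeed automatic under symmetry (a one-line verification using $c_{A,M}\circ c_{M,A}=\id$) and that the compatibility of $-\otimes A$ with the braiding, established in Prop.~\ref{prop:embed}, is preserved under the identifications above, which it clearly is because both identifications are identities of braided categories rather than nontrivial equivalences.
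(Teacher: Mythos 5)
Your proof is correct and follows exactly the paper's route: the paper derives this lemma as a corollary of Prop.~\ref{prop:embed} applied with ambient category $\EE$, using the same two simplifications you spell out, namely $\{A\}'|_\EE=\EE$ and $\EE_A^0=\EE_A$ in the symmetric case. Your write-up just makes explicit the one-line verifications that the paper leaves implicit.
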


In other words, the symmetry $\EF$ is preserved under the symmetry-breaking process $-\otimes A: \EE \to \EE_A$.

\subsection{Braided fusion categories over $\EE$}  \label{sec:bfc-over-e}

In this subsection, we recall the notion of a braided fusion category over a SFC $\EE$ and its basic properties from \cite{dno}. 

\begin{defn} {\rm
A braided fusion category over $\EE$, or a \bfce, is a braided fusion category $\EC$ equipped with a braided full embedding $\eta_\EC: \EE \hookrightarrow \EC'$. A fusion $/\EE$-subcategory of $\EC$ is a fusion subcategory containing $\EE$. A \bfce ~$\EC$ is called {\it non-degenerate}, if, in addition, $\EC'=\EE$. 
}
\end{defn}

We will abbreviate a non-degenerate \bfce ~to a \nbfce.

\begin{defn} {\rm
A braided $/\EE$-functor $f: \EC \to \ED$ between two \bfce's $\EC$ and $\ED$ is a braided functor preserving the embeddings of $\EE$, i.e. $\eta_\ED\simeq f\circ \eta_\EC$. 
}
\end{defn}

\begin{rema} \label{rema:aut-C}  {\rm
We denote the category of braided $/\EE$-autoequivalences of $\EC$ by $\mathrm{Aut}(\EC)$ and the underlining group by $\underline{\mathrm{Aut}}(\EC)$. Note that $\underline{\mathrm{Aut}}(\EC)$ is the trivial group when $\EC=\EE$. 
}
\end{rema}

Let $\EC$ be a braided fusion category. Let $R: \EC \to \EC\boxtimes \overline{\EC}$ be the right adjoint of the tensor product functor $\EC\boxtimes \overline{\EC} \xrightarrow{\otimes} \EC$, which factors through the canonical braided functor $\EC \boxtimes \overline{\EC} \to Z(\EC)$. Set $L_\EC:=R(\one_\EC)$. It is a condensable algebra in $\EC\boxtimes \overline{\EC}$ and decomposes as $L_\EC=\oplus_{i\in O(\EC)} i\boxtimes i^\ast$. Note that $\fpdim(L_\EC)=\fpdim(\EC)$. Similarly, we have the condensable algebra $L_\EE=R(\one_\EE)=\oplus_{i\in O(\EE)} i\boxtimes i^\ast$ induced from $\otimes: \EE \boxtimes \EE \to \EE$ and its right adjoint functor $R$. If $\EC$ is a \bfce, it is clear that $L_\EE$ is a condensable subalgebra of $L_\EC$. The condensation of $L_\EE$ break the symmetry from $\EE \boxtimes \EE$ to $\EE$. 

\begin{rema} \label{rema:E-alg} {\rm
When $\EE=\rep(G)$, we have $\rep(G)\boxtimes \rep(G)=\rep(G\times G)$ and the tensor product functor $\otimes: \EE \boxtimes \EE \to \EE$ can be identified with the forgetful functor $\rep(G\times G) \to \rep(G)$, where $G$ is the subgroup of $G\times G$ via diagonal map $\Delta: G \hookrightarrow G\times G$. By Thm.\,\ref{thm:KO}, the algebra $L_{\rep(G)}$ can be identified with the algebra $\fun(G\times G/G)$ of functions on the coset $G\times G/G$. According to Remark\,\ref{rema:sym-breaking}, condensing the algebra $L_\EE$ amounts to breaking the symmetry $G \times G$ to $G$. 
}
\end{rema}


Let $\EC$ and $\ED$ be two \bfce's. The relative tensor product $\EC\boxtimes_\EE\ED$ is well-defined as a category \cite{T,eno2009} and as a fusion category \cite{dno,kz}, which can be identified with the fusion category $(\EC\boxtimes \ED)_{L_\EE}$ \cite{dno}. Moreover, in this case, we have $(\EC\boxtimes \ED)_{L_\EE} = (\EC\boxtimes \ED)_{L_\EE}^0$. Therefore, $\EC\boxtimes_\EE\ED$ has a canonical structure of a braided fusion category. Since $L_\EE \cap (\EE\boxtimes \one)=\one\boxtimes \one$, by Prop.\,\ref{prop:embed}, we obtain a braided full embedding $-\otimes L_\EE: \EE \hookrightarrow (\EC\boxtimes_\EE \ED)'$. Therefore, $\EC\boxtimes_\EE \ED$ is a \bfce ~\cite{dno}. The relative tensor product $\boxtimes_\EE$ amounts to break the symmetry $\EE\boxtimes \EE$ on $\EC\boxtimes \ED$ to $\EE$. It forms an associative tensor product in the 2-category of \bfce's. 

\begin{prop} {\rm \cite[Cor.\,4.6]{dno}}
Let $A$ be a condensable algebra in a \bfce ~$\EC$ and $A\cap \EE=\one$. Then $\EC_A^0$ is a \bfce. If, in addition, $\EC$ is an \nbfce, so is $\EC_A^0$. 
\end{prop}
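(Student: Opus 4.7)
My plan is to prove the two assertions separately, with the first being a near-immediate consequence of Proposition~\ref{prop:embed} and the second reducing to a Frobenius--Perron dimension count after invoking a Müger-type double-centralizer principle.

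\textbf{Part 1} (showing $\EC_A^0$ is a \bfce). I would apply Proposition~\ref{prop:embed} with $\EB=\EE$. Since $\EE \subset \EC'$, every object of $\EE$ has trivial double braiding with $A$, so $\EE \subset \{A\}'|_\EC$ and hence $\{A\}'|_\EC \cap \EE = \EE$. Proposition~\ref{prop:embed} then yields a braided fully faithful monoidal embedding $\eta:\EE \hookrightarrow \EC_A^0$, $e \mapsto e \otimes A$. It remains to check that $\eta(\EE) \subset (\EC_A^0)'$: the braiding in $\EC_A^0$ is induced from that in $\EC$ via the quotient $-\otimes- \twoheadrightarrow -\otimes_A -$, so for $e \in \EE$ and $M \in \EC_A^0$ the double braiding $c^A_{M,\eta(e)} \circ c^A_{\eta(e),M}$ descends from the $\EC$-double-braiding of $e$ with the underlying object $\underline{M}$, which is trivial because $e \in \EC'$. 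This supplies the required embedding $\eta_{\EC_A^0}:\EE \hookrightarrow (\EC_A^0)'$.

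\textbf{Part 2} (non-degeneracy). Assume $\EC' = \EE$; write $\EE_A := \eta(\EE)$, so $\fpdim(\EE_A) = \fpdim(\EE)$ and $\EE_A \subset (\EC_A^0)'$. The plan is to prove the reverse inclusion objectwise. If $M \in (\EC_A^0)'$, then for every $x \in \{A\}'|_\EC$ the free local module $x \otimes A$ has trivial double braiding with $M$ in $\EC_A^0$; lifting through the quotient defining $\otimes_A$ shows that $\underline{M} \in \EC$ centralizes $\{A\}'|_\EC$ in $\EC$. By Müger's double-centralizer theorem applied inside $\EC$, $(\{A\}'|_\EC)'|_\EC = \langle A \rangle \vee \EC' = \langle A, \EE\rangle$, so $\underline{M}$ lies in the fusion subcategory generated by $A$ and $\EE$. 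Because $A \cap \EE = \one$, the embedding $-\otimes A:\EE \hookrightarrow \EC_A^0$ of Part~1 identifies $\EE_A$ with $(\langle A,\EE\rangle)_A^0$; indeed, a straightforward Frobenius--Perron count using $\fpdim\langle A,\EE\rangle = \fpdim(A)\fpdim(\EE)$ together with $\fpdim((\langle A,\EE\rangle)_A^0)\cdot \fpdim(A) = \fpdim(\langle A,\EE\rangle)/\fpdim(A) \cdot \fpdim(A)=\fpdim(\EE)\cdot \fpdim(A)/\fpdim(A)$ exhausts the possibilities. Consequently $M \in \EE_A$ and $(\EC_A^0)' = \EE_A$, as desired.

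\textbf{Main obstacle.} The nontrivial step is the second one, specifically the passage from ``$\underline{M} \in \langle A,\EE\rangle$'' to ``$M \in \EE_A$''. This is where the hypothesis $A \cap \EE = \one$ is essential: it guarantees both that $-\otimes A: \EE \to \EC_A^0$ is an embedding and that condensing $A$ on $\langle A,\EE\rangle$ produces no local $A$-modules beyond those of the form $e\otimes A$ with $e\in\EE$. The other delicate point is the invocation of the double-centralizer identity $(\EA'|_\EC)'|_\EC = \EA \vee \EC'$ in the possibly degenerate category $\EC$ (only $\EC/\EE$ is assumed non-degenerate); one has to verify it applies cleanly in this relative setting, which follows from the general results in \cite{dgno2} but should be stated with care.
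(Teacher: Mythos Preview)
The paper does not supply its own proof of this proposition; it is quoted verbatim from \cite[Cor.\,4.6]{dno}. So there is no ``paper's proof'' to compare against, and your attempt should be judged on its own merits.

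Your Part~1 is correct and is exactly the argument the paper uses elsewhere (just before the proposition) when it invokes Prop.\,\ref{prop:embed} to embed $\EE$ into $(\EC\boxtimes_\EE\ED)'$.

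Part~2 has genuine gaps. First, the ``lifting through the quotient'' in Step~2 is not justified as written: the braiding in $\EC_A^0$ is \emph{defined} by descent from $\EC$, so one cannot simply lift an identity back. What saves you is the special feature of free modules: under the canonical identifications $(x\otimes A)\otimes_A M \simeq x\otimes M$ and $M\otimes_A(x\otimes A)\simeq M\otimes x$ (valid because $x\in\{A\}'|_\EC$), the $\EC_A^0$-double-braiding literally \emph{is} the $\EC$-double-braiding $c_{M,x}\circ c_{x,M}$, with no quotient involved. You should say this explicitly rather than invoke a vague lifting.

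The more serious problem is the final dimension count. You assert $\fpdim\langle A,\EE\rangle=\fpdim(A)\fpdim(\EE)$, but $\langle A\rangle$ is the fusion subcategory \emph{generated} by $A$, whose Frobenius--Perron dimension is in general strictly larger than $\fpdim(A)$; the equality you wrote has no justification. Worse, you then implicitly use $\fpdim(\ED_A^0)=\fpdim(\ED)/\fpdim(A)^2$ with $\ED=\langle A,\EE\rangle$, but that identity (Eq.\,(\ref{eq:ZC_A}) in the paper) requires $\ED$ to be non-degenerate, whereas $\langle A,\EE\rangle$ contains the symmetric subcategory $\EE$ in its M\"uger center. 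So the computation that is supposed to force $(\langle A,\EE\rangle)_A^0=\EE_A$ does not go through as stated. This is precisely the step where the proof in \cite{dno} uses additional structural input (their treatment of \'etale algebras in categories over $\EE$), and it cannot be replaced by a naive dimension count.
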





\subsection{Unitary braided fusion categories} \label{sec:unitary}

\begin{defn} \label{def:star-cat} {\rm
A $\ast$-category $\EC$ is a $\Cb$-linear category equipped with a functor $\ast: \EC \to \EC^\op$ 
which acts trivially on the objects and is anti-linear and involutive on morphisms, i.e. $\ast:\hom(A,B) \to \hom(B,A)$ is defined so that 
\be \label{eq:dagger}
(g \circ f)^\ast = f^\ast \circ g^\ast, \quad\quad (\lambda f)^\ast = \bar{\lambda} f^\ast,\quad\quad f^{\ast\ast} = f. 
\ee
for $f: U \to V$, $g: V \to W$, $h: X \to Y$, $\lambda \in \Cb^\times$.
A $\ast$-category is called {\it unitary} if $\ast$ satisfies the positive condition: $f\circ f^\ast =0$ implies $f=0$. 
}
\end{defn}

\begin{defn}  \label{def:unitary} {\rm
A unitary fusion category $\EC$ is a fusion category and a unitary category such that $\ast$ is compatible with the monoidal structures, i.e.
\bea
&(g\otimes h)^\ast=g^\ast \otimes h^\ast,\quad\quad \forall g: V\to W, h: X \to Y, & \\
&\alpha_{X,Y,Z}^\ast=\alpha_{X,Y,Z}^{-1},\quad l_X^\ast=l_X^{-1},\quad r_X^\ast=r_X^{-1}. & \label{eq:unitary-asso-unit}
\eea
A unitary braided fusion category is a unitary fusion category and is braided so that
$c_{X,Y}^\ast = c_{X,Y}^{-1}$ for all $X,Y$. 
}
\end{defn}

We abbreviate a unitary fusion category to a UFC, and a unitary braided fusion category to a UBFC. In a UFC, the hom spaces are actually finite dimensional Hilbert spaces, and the left duals coincide with the right duals, i.e. ${}^\ast x = x^\ast$ for $x\in \EC$. A fusion subcategory of a UFC is automatically a UFC.

\begin{rema} {\rm
A convenient way to check the unitarity of a given fusion category is to check if one can find a basis of the hom spaces such that fusion matrices in this basis are all unitary. This is enough to promote a fusion category to a unitary fusion category \cite{yamagami,galindo}. 
}
\end{rema}

Let $\EC$ be a UFC. We would like to know if the Drinfeld center $Z(\EC)$ is unitary. Let $Z^\ast(\EC)$ be the unitary center that is defined as the subcategory of $Z(\EC)$ such that the half-braidings in $Z^\ast(\EC)$ are all unitary. 
\begin{prop} {\rm \cite{galindo,ghr}} \label{prop:unitary}
Every braiding of a $\mathrm{UFC}$ is unitary. In particular, for a $\mathrm{UFC}$ $\EC$, we have $Z^\ast(\EC)=Z(\EC)$ and $Z(\EC)$ is a \ubfc.   
\end{prop}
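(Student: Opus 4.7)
The proposition has two parts: (A) every braiding $c$ on a UFC $\EC$ is unitary, and (B) for a UFC $\EC$, $Z^{\ast}(\EC)=Z(\EC)$ and $Z(\EC)$ is a UBFC. Part (B) is meant to follow from Part (A) applied to the Drinfeld center (whose braiding is given by the half-braidings themselves), once we verify $Z(\EC)$ inherits a unitary structure from $\EC$.

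For Part (A), my plan is to form the natural endomorphism $T_{X,Y}:=c^{\ast}_{X,Y}\circ c_{X,Y}$ of $X\otimes Y$. By the positivity axiom of the $\ast$-involution, $T_{X,Y}$ is a positive invertible operator; by naturality of $c$, so is $T$ in both variables. The hexagon axioms for $c$, upon dualizing, translate into multiplicativity conditions for $T$ with respect to the tensor product. For simple $X,Y$, Schur's lemma forces $T_{X,Y}$ to decompose as a direct sum of positive scalars on the isotypic components of $X\otimes Y$. The plan is then to combine the hexagon-induced multiplicativity with the parallel analysis for $c^{-1}$ (which also satisfies hexagon) to overdetermine these positive scalars and pin them down to $1$, yielding $T=\id$ and hence $c^{\ast}=c^{-1}$.

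For Part (B), I would first show every half-braiding $z_{X,-}$ is unitary by a parallel analysis: $z$ satisfies its own hexagon-type axioms in the second variable, and the positive endomorphism $z^{\ast}_{X,Y}\circ z_{X,Y}$ is subject to analogous constraints forcing it to equal the identity. Once every half-braiding is unitary, the $\ast$-involution on $\EC$ lifts to $Z(\EC)$ — the $\ast$-adjoint of a morphism in $\EC$ commuting with unitary half-braidings still commutes with them — making $Z(\EC)$ a UFC. Then Part (A) applied to $Z(\EC)$ gives that its braiding is unitary, i.e., $Z^{\ast}(\EC)=Z(\EC)$ and $Z(\EC)$ is a UBFC. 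The main obstacle is Part (A): the hexagon multiplicativity alone leaves the positive scalars underdetermined, and the crucial additional input is the interplay between $c$ and $c^{-1}$ under the $\ast$-operation, together with the positivity of the quantum trace coming from the canonical unitary pivotal structure on a UFC, which supplies enough independent constraints as worked out in \cite{galindo,ghr}.
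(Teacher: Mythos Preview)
The paper does not prove this proposition at all: it is stated with attribution to \cite{galindo,ghr} and no argument is given. So there is no ``paper's own proof'' to compare your proposal against; the authors simply import the result from the literature, as is standard for background material.

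Your sketch of Part (A) is in the spirit of Galindo's argument---forming the positive natural automorphism $T_{X,Y}=c_{X,Y}^{\ast}c_{X,Y}$ and exploiting the hexagon axioms together with positivity---but you explicitly defer the decisive step (pinning down the scalars) to the very references the paper cites. That makes your write-up a plausible reading guide rather than a self-contained proof, which in effect matches the paper's own treatment.

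One genuine concern with your Part (B): the ``parallel analysis'' for half-braidings is weaker than you suggest. A braiding satisfies hexagon identities in \emph{both} tensor factors, and it is this two-sided constraint (plus the interplay with the reverse braiding) that over-determines the positive scalars. A half-braiding $z_{X,-}$ satisfies a hexagon only in the second variable, so the analogous system of constraints is underdetermined; in particular, knowing that $(z_{X,-}^{\ast})^{-1}$ is again a half-braiding on the same underlying object $X$ does not by itself force it to equal $z_{X,-}$, since an object of $\EC$ can carry several inequivalent half-braidings. The route taken in \cite{ghr} (and quoted in the paper just below the proposition) is different: one shows that the category $\Fun_{\EC|\EC}^{\ast}(\EC,\EC)$ of unitary bimodule endofunctors is a unitary fusion category monoidally equivalent to $\Fun_{\EC|\EC}(\EC,\EC)=Z(\EC)$, which gives $Z^{\ast}(\EC)\simeq Z(\EC)$ without a direct scalar-chasing argument on individual half-braidings.
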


More generally, if $\EC$ is a UFC, the natural replacement of a $\EC$-module category is a $\EC$-module $\ast$-category. A unitary functor is a functor preserving the $\ast$-structure. 

\begin{thm} {\rm \cite{ghr}}
The monoidal category $\Fun_\EC^\ast(\EM, \EM)$ of $\EC$-module $\ast$-functors is a unitary fusion category that is monoidally equivalent to $\Fun_\EC(\EM,\EM)$. 
\end{thm}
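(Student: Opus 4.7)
The plan is to endow $\Fun_\EC^\ast(\EM,\EM)$ with a pointwise $\ast$-structure inherited from $\EM$, verify that this makes it a unitary $\ast$-category compatible with composition of functors, and then show that the forgetful functor to $\Fun_\EC(\EM,\EM)$ is a monoidal equivalence.

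First, I would define the $\ast$ on $\Fun_\EC^\ast(\EM,\EM)$ pointwise: for a $\EC$-module natural transformation $\phi\colon F\to G$ between $\ast$-preserving module functors, set $(\phi^\ast)_m := (\phi_m)^\ast$, where the right-hand side uses the $\ast$-structure on $\EM$. Naturality of $\phi^\ast$ is checked by applying $\ast$ to the naturality square of $\phi$ and using $F(f)^\ast = F(f^\ast)$, $G(f)^\ast = G(f^\ast)$. Compatibility of $\phi^\ast$ with the $\EC$-module structures follows from the unitarity of the module structure isomorphisms of $F$ and $G$ together with $\ast$-preservation of the tensor product on $\EC$ (Prop.\,\ref{prop:unitary} combined with Def.\,\ref{def:unitary}). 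The axioms (\ref{eq:dagger}) for $\ast$ pass to the pointwise definition, and positivity is immediate: if $\phi\circ\phi^\ast = 0$ then $\phi_m\circ\phi_m^\ast = 0$ for all $m$, forcing $\phi_m = 0$ by unitarity of $\EM$. Compatibility with composition (the monoidal product in $\Fun_\EC^\ast(\EM,\EM)$) and with the associators is automatic, since composition and $\ast$ are both pointwise and the associator comes from the natural identity at the functor level.

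Second, consider the forgetful functor $U\colon \Fun_\EC^\ast(\EM,\EM) \to \Fun_\EC(\EM,\EM)$. This is clearly a faithful monoidal functor; it is also full because a $\EC$-module natural transformation between two $\ast$-functors carries no extra data. To upgrade $U$ to an equivalence, the essential surjectivity step is needed: every $\EC$-module functor $(F,s)$ must be isomorphic to one that is $\ast$-preserving with unitary module structure isomorphism $s$. I would establish this by a unitarization argument. On each hom space $\hom_\EM(m,F(n))$ one has a positive-definite inner product coming from the $\ast$-structure on $\EM$ (via $\langle f,g\rangle \,\id = g^\ast\circ f$ when $m$ is simple, extended by semisimplicity). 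Using that $\EC$ is unitary, the action functors $c\otimes -$ are themselves $\ast$-functors; combined with the semisimplicity of $\Fun_\EC(\EM,\EM)$, a polar decomposition of the module structure isomorphism $s_{c,m}\colon F(c\otimes m)\to c\otimes F(m)$ yields a positive part and a unitary part. Absorbing the positive part into a natural isomorphism $F\simeq F'$ produces a new module functor $F'$ whose structure isomorphism is unitary, i.e.\ $F' \in \Fun_\EC^\ast(\EM,\EM)$.

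The main obstacle is the last step: verifying coherently that the polar decomposition can be performed so that (i) it commutes with the hexagon-type coherence relating $s_{c\otimes c',m}$ to $s_{c,c'\otimes m}$ and $s_{c',m}$, and (ii) the resulting natural isomorphism $F\simeq F'$ is a $\EC$-module isomorphism. The key input is that $\EC$ is unitary, so that the associator and braiding (not used here) are unitary, which ensures the positive parts assemble into a genuine module natural transformation. Once essential surjectivity is secured, $U$ is an equivalence of monoidal categories; since $\Fun_\EC(\EM,\EM)$ is a fusion category, the $\ast$-structure on $\Fun_\EC^\ast(\EM,\EM)$ built in the first step then promotes it to a unitary fusion category, completing the proof.
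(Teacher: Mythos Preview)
The paper does not prove this theorem; it is quoted from \cite{ghr}, so there is no in-paper proof to compare against. Evaluating your proposal on its own merits, the first two steps are fine: the pointwise $\ast$-structure on $\Fun_\EC^\ast(\EM,\EM)$ is correctly defined, and the forgetful functor $U$ is indeed full and faithful since module natural transformations between $\ast$-functors carry no extra data.

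The genuine gap is in essential surjectivity, and you have identified but not closed it. The polar decomposition of $s_{c,m}$ produces a positive operator $|s_{c,m}|\colon F(c\otimes m)\to F(c\otimes m)$ indexed by \emph{pairs} $(c,m)$, whereas a natural isomorphism $\phi\colon F\Rightarrow F'$ must be indexed by single objects $n\in\EM$. Different factorizations $n\simeq c\otimes m$ will in general yield different positive parts, so there is no canonical way to ``absorb'' the family $\{|s_{c,m}|\}$ into a single $\phi$. Moreover, even if one managed to unitarize $s$, this does nothing to ensure that $F$ itself satisfies $F(f^\ast)=F(f)^\ast$ on morphisms of $\EM$; that is a separate condition on $F$, and your argument never addresses it. The coherence issues you flag in (i) and (ii) are symptoms of this structural mismatch, not mere bookkeeping.

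The argument in \cite{ghr} takes a different route that avoids this difficulty entirely: one realizes the indecomposable $\EC$-module $\ast$-category $\EM$ as $\EC_A$ for a special $\dagger$-Frobenius algebra (a $Q$-system) $A$ in $\EC$, so that $\Fun_\EC(\EM,\EM)\simeq {}_A\EC_A$. Because $A$ is a $Q$-system in a unitary category, every $A$-bimodule in $\EC$ is automatically a $\ast$-bimodule with unitary structure maps, and the $\ast$-structure on ${}_A\EC_A$ is inherited directly from $\EC$. This gives essential surjectivity without any polar-decomposition gymnastics at the functor level. If you want to salvage a direct approach, you would first need to unitarize $F$ as a bare functor (using semisimplicity of $\EM$ to reduce to simples), and only then address the module constraint $s$; but at that point the algebra description is both cleaner and what the cited reference actually does.
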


It is well-known that a UFC has a unique spherical structure \cite{kitaev}, and the Frobenius-Perron dimensions coincide with the quantum dimensions \cite{eno2002}. A non-degenerate \ubfc ~is automatically equipped with the structure of a unitary modular tensor category (UMTC) (introduced first in \cite{MS}). We will not define a UMTC explicitly (see for example \cite{turaev}). For the purpose of this work, it is enough to take the non-degenerate \ubfc ~as the definition of a UMTC. 

\begin{expl} \label{expl:unitary} {\rm
We give some examples of unitary (braided) fusion categories.
\bnu
\item Let $G$ be a finite group. The fusion category $\rep(G)$ has a canonical structure of UFC. As a consequence, symmetric fusion categories are all unitary. By Prop.\,\ref{prop:unitary}, the Drinfeld center $Z(\EE)$ of a SFC $\EE$ is also unitary. 
\item Since $H^n(G,U(1))=H^n(G,\Cb^\times)$ by the universal coefficient theorem, every pointed fusion category is a unitary fusion category \cite{ghr}, i.e. $\vect_G^\omega$ is a UFC for $\omega\in H^3(G,U(1))$. The Drinfeld center $Z(\vect_G^\omega)$ is a UMTC.
\enu
}
\end{expl}

A lot of constructions for (non-degenerate) braided fusion categories work automatically in the unitary cases. For example, given a UMTC $\EC$ and a condensable algebra $A$ in $\EC$, it is easy to check that $\EC_A$ is a UFC and $\EC_A^0$ is a UMTC \cite{bek}. Most of the results in this work holds for both unitary and non-unitary cases. We will mention explicitly when results in unitary and non-unitary cases are different.

\begin{defn} \label{def:umtce}
{\rm
A unitary braided fusion category over $\EE$, or a \ubfce, is a UBFC $\EC$ equipped with a braided full embedding $\eta_\EC: \EE \hookrightarrow \EC'$. A fusion $/\EE$-subcategory of $\EC$ is a fusion subcategory containing $\EE$. A \ubfce ~$\EC$ is called {\it non-degenerate}, or a unitary modular tensor category over $\EE$ (or \umtce), if, in addition, $\EC'=\EE$. 
}
\end{defn}

We will abbreviate a non-degenerate \ubfce ~(or a unitary modular tensor category over $\EE$) ~to a \umtce.

\begin{defn} {\rm
A braided $/\EE$-functor $f: \EC \to \ED$ between two \ubfce's $\EC$ and $\ED$ is a braided unitary functor preserving the embeddings of $\EE$, i.e. $\eta_\ED\simeq f\circ \eta_\EC$. 
}
\end{defn}

We use the same notations as in Remark\,\ref{rema:aut-C} for the unitary cases.

\section{The group $\EM_{ext}(\EE)$ of the modular extensions of $\EE$}  \label{sec:mext}

\subsection{Modular extensions of a \mtce} \label{sec:mext-def}

\begin{defn} {\rm
A UMTC containing a UBFC $\EC$ is a pair $(\EM, \iota_\EM)$, where $\EM$ is a UMTC and $\iota_\EM: \EC \hookrightarrow \EM$ is a braided full embedding. 
}
\end{defn}

\begin{rema} {\rm
If we drop the assumption on the unitarity on both $\EM$ and $\EC$, we obtain the notion of a {\it non-degenerate extension} $(\EM, \iota_\EM)$ of a \bfce ~$\EC$. It should be interesting mathematically. We will discuss a little bit about it in Sec.\,\ref{sec:witt}.
}
\end{rema}

\begin{rema} {\rm
If $(\EM, \iota_\EM)$ is a UMTC $\EM$ containing a UBFC $\EC$, then $(\overline{\EM}, \overline{\iota_\EM}:=\iota_\EM: \overline{\EC} \hookrightarrow \overline{\EM})$ is automatically a UMTC containing $\overline{\EC}$. 
}
\end{rema}

\begin{defn} \label{def:mext}
{\rm
Let $\EC$ be a \umtce. A {\it modular extension} of $\EC$ is a UMTC containing $\EC$, i.e. $(\EM, \iota_\EM)$, such that $\EE'|_\EM=\EC$. 
}
\end{defn}

\begin{rema} \label{rema:minimal} {\rm
The notion of a modular extension of $\EC$ was first introduced by M\"{u}ger in \cite[Conjecture\,5.2]{mueger1}, and was called ``a minimal modular extension of $\EC$''. In this work, we only study the minimal modular extensions. Importantly, it is not yet clear to us what the non-minimal ones (dropping the condition $\EE'|_\EM=\EC$) represent in physics (see also Remark\,\ref{rema:non-minimal}). Therefore, we drop the term ``minimal'' until the last section, where we raise some questions on non-minimal modular extensions. 
}
\end{rema}

\begin{rema} {\rm
For a given \umtce ~$\EC$, it is possible that there is no modular extension of $\EC$. An example was constructed by Drinfeld \cite{drinfeld}. 
}\end{rema}

\begin{expl} \label{expl:E-ZE}  {\rm
Let $\iota_0: \EE \hookrightarrow Z(\EE)$ be the canonical embedding induced by the central functor $\id_\EE: \EE \to \EE$. Then the pair $(Z(\EE), \iota_0)$ gives a modular extension of $\EE$. 
}
\end{expl}

\begin{expl} {\rm
Let $\EM$ be a UMTC containing $\EE$. Then $\EE'|_{\EM}$ is a \umtce
~\cite{mueger2} and $\EM$ is a modular extension of $\EE'|_{\EM}$.  
}
\end{expl}

\begin{defn} \label{def:eq-mext}  {\rm
Two modular extensions $(\EM, \iota_\EM)$ and $(\EN, \iota_\EN)$ of a \mtce ~$\EC$ are equivalent if there is a unitary braided monoidal equivalence $f: \EM \xrightarrow{\simeq} \EN$ such that $f\circ  \iota_\EM \simeq \iota_\EN$. 
}
\end{defn}

We denote the set of the equivalence classes of the modular extensions of a \mtce ~$\EC$ by $\EM_{ext}(\EC)$. 
For simplicity, we also abbreviate the pair $(\EM, \iota_\EM)$ simply to $\EM$. Also note that we have the identity $\fpdim(\EM)=\fpdim(\EC)\fpdim(\EE)$. Since the Frobenius-Perron dimension of the modular extension of $\EC$ is fixed, according to \cite{bnrw}, the set $\EM_{ext}(\EC)$, if not empty, must be finite. 

\begin{rema} \label{rema:aut-C-2}  {\rm
If $\EM_{ext}(\EC)$ is not empty, there is a natural action of 
$\mathrm{Aut}(\EC)$ (recall Remark\,\ref{rema:aut-C}) on the category of the modular extensions of $\EC$ defined by $\phi \cdot (\EM, \iota_M) := (\EM, \iota_\EM \circ \phi)$ for $\phi \in \mathrm{Aut}(\EC)$. 
This action descends to a natural $\underline{\mathrm{Aut}}(\EC)$-action on $\EM_{ext}(\EC)$. 
}
\end{rema}

\begin{lem} \label{lem:MEN}
If $\EM_{ext}(\EC)$ and $\EM_{ext}(\ED)$ are not empty, then $\EM_{ext}(\EC\boxtimes_\EE \ED)$ is not empty, and there is a well-defined map 
$$
\boxtimes_\EE^{(-,-)}: \EM_{ext}(\EC) \times \EM_{ext}(\ED) \to \EM_{ext}(\EC\boxtimes_\EE \ED).
$$
More explicitly, let $(\EM, \iota_\EM:\EC \hookrightarrow \EM)$ and $(\EN, \iota_\EN:\ED\hookrightarrow \EN)$ be the modular extensions of two \mtce's $\EC$ and $\ED$, respectively. Then $\EC\boxtimes_\EE \ED$ is a \mtce ~and the pair 
$$
\EM \boxtimes_\EE^{(\iota_\EM,\iota_\EN)} \EN:= \left( (\EM \boxtimes\EN)_{L_\EE}^0\, , \quad \iota_\EM \boxtimes_\EE \iota_\EN: (\EC\boxtimes\ED)^0_{L_\EE} \hookrightarrow (\EM \boxtimes\EN)_{L_\EE}^0 \right)
$$ 
is a modular extension of $\EC\boxtimes_\EE\ED$. 
\end{lem}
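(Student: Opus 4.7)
My plan is to verify the four conditions that constitute a modular extension of $\EC\boxtimes_\EE\ED$: (i) that $(\EM\boxtimes\EN)_{L_\EE}^0$ is a UMTC; (ii) that $\iota_\EM\boxtimes_\EE\iota_\EN$ is a braided full embedding; (iii) that the M\"{u}ger centralizer of $\EE$ inside $(\EM\boxtimes\EN)_{L_\EE}^0$ is precisely the image of $\EC\boxtimes_\EE\ED$; and (iv) that $\EC\boxtimes_\EE\ED$ is itself a \umtce. Condition (iv) will come for free from (i)--(iii), since the M\"{u}ger centralizer of $\EE$ in any UMTC is automatically a \umtce.

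For (i) and (ii), the Deligne tensor product $\EM\boxtimes\EN$ is a UMTC, and $L_\EE=\bigoplus_{i\in O(\EE)} i\boxtimes i^\ast$ is a condensable algebra in $\EE\boxtimes\EE$, hence in $\EM\boxtimes\EN$ via the braided embedding $\iota_\EM\boxtimes\iota_\EN$. The standard results recalled in Sec.\,\ref{sec:alg-bfc} and Sec.\,\ref{sec:unitary} (non-degeneracy and unitarity of $\EC_A^0$) then imply that $(\EM\boxtimes\EN)_{L_\EE}^0$ is a UMTC. Since $\iota_\EM\boxtimes\iota_\EN$ carries $L_\EE$ to $L_\EE$ as algebras, it descends to a braided functor between the categories of local $L_\EE$-modules. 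Fully faithfulness on free modules follows from the adjunction $\hom_{\EA_A}(X\otimes A, Y\otimes A)\simeq \hom_\EA(X, Y\otimes A)$ combined with the fully faithfulness of $\iota_\EM\boxtimes\iota_\EN$, and extends to all local modules by semisimplicity.

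The substantive step is (iii), for which I would use Frobenius-Perron dimensions. Applying \eqref{eq:fpdim-centralizer} with $\EA=\EE$ and using that $(\EM\boxtimes\EN)_{L_\EE}^0$ is non-degenerate gives
\be
\fpdim\bigl(\EE'|_{(\EM\boxtimes\EN)_{L_\EE}^0}\bigr)
=\frac{\fpdim((\EM\boxtimes\EN)_{L_\EE}^0)}{\fpdim(\EE)}
=\frac{\fpdim(\EM)\,\fpdim(\EN)}{\fpdim(\EE)^{3}}
=\frac{\fpdim(\EC)\,\fpdim(\ED)}{\fpdim(\EE)},
\ee
where I use \eqref{eq:ZC_A}, $\fpdim(L_\EE)=\fpdim(\EE)$, and $\fpdim(\EM)=\fpdim(\EC)\fpdim(\EE)$ (analogously for $\EN$). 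On the other hand, \eqref{eq:fpdim-A} gives $\fpdim(\EC\boxtimes_\EE\ED)=\fpdim(\EC)\fpdim(\ED)/\fpdim(\EE)$. Since $\EE\subset\EC'|_\EM$ and $\EE\subset\ED'|_\EN$, every object of $\EC\boxtimes\ED$ has trivial double braiding with the diagonal copy of $\EE$ inside $\EM\boxtimes\EN$, and this centralizing property descends to the local $L_\EE$-module category, yielding the inclusion $\EC\boxtimes_\EE\ED\subseteq\EE'|_{(\EM\boxtimes\EN)_{L_\EE}^0}$; the matching Frobenius-Perron dimensions then force equality. The main technical nuisance will be bookkeeping the various copies of $\EE$ (left factor, right factor, diagonal image after condensation) and confirming that the inherited braiding on $(\EM\boxtimes\EN)_{L_\EE}^0$ witnesses the centralizing condition on the nose; after that, the rest is routine dimension counting.
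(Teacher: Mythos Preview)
Your proposal is correct and follows essentially the same approach as the paper's proof: establish the inclusion $\EC\boxtimes_\EE\ED\subseteq\EE'|_{(\EM\boxtimes\EN)_{L_\EE}^0}$ and then force equality by matching Frobenius--Perron dimensions, using $\fpdim((\EM\boxtimes\EN)_{L_\EE}^0)=\fpdim(\EE)\fpdim(\EC\boxtimes_\EE\ED)$. The paper's version is terser (it takes the embeddings and the UMTC property of the local module category as understood from Sec.\,\ref{sec:alg-bfc}--\ref{sec:bfc-over-e}), while you spell out steps (i), (ii) and the application of \eqref{eq:fpdim-centralizer} more explicitly, but the substance is the same.
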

\begin{proof}
Notice that we have the following embeddings of braided fusion categories:
$$
\EE = \EE \boxtimes_\EE \EE = (\EE\boxtimes \EE)_{L_\EE}^0 \hookrightarrow (\EC \boxtimes \ED)_{L_\EE}^0 \hookrightarrow (\EM \boxtimes \EN)_{L_\EE}^0\, .
$$
It is clear that $\EC\boxtimes_\EE \ED$ is contained in $\EE'|_{(\EM \boxtimes \EN)_{L_\EE}^0}$ and $\fpdim(\EE)\fpdim(\EC\boxtimes_\EE \ED) =\fpdim(\EM \boxtimes \EN)_{L_\EE}^0$. Therefore, we must have $\EC\boxtimes_\EE \ED=\EE'|_{(\EM \boxtimes \EN)_{L_\EE}^0}$. This implies both that $\EC\boxtimes_\EE \ED$ is a \mtce ~and that $\left( (\EM \boxtimes\EN)_{L_\EE}^0,\iota_\EM \boxtimes_\EE \iota_\EN \right)$ is a modular extension of $\EC\boxtimes_\EE \ED$.  
\end{proof}

\begin{rema} {\rm
We use the notation $\EM \boxtimes_\EE^{(\iota_\EM,\iota_\EN)} \EN$ to suggest that it 
can indeed be viewed as some kind of relative product of two modular extensions. The superscript in $\boxtimes_\EE^{(\iota_\EM,\iota_\EN)}$ reminds readers that it is not the relative tensor product $\boxtimes_\EE$ in the usual sense. 
}
\end{rema}

\begin{prop} \label{prop:comm}
The tensor product $\boxtimes_\EE^{(-,-)}$ is commutative, i.e. $\EM \boxtimes_\EE^{(\iota_\EM,\iota_\EN)} \EN\simeq \EN \boxtimes_\EE^{(\iota_\EN,\iota_\EM)} \EM$. 
\end{prop}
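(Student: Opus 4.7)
The plan is to exhibit an explicit braided equivalence given by the swap of tensor factors and then check that it descends to the categories of local modules.

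First, I would introduce the swap functor $\sigma : \EM \boxtimes \EN \to \EN \boxtimes \EM$ defined on objects by $x \boxtimes y \mapsto y \boxtimes x$ and analogously on morphisms. This is well known to be a braided monoidal equivalence of UMTCs: the Deligne tensor product is symmetric, and under the product braiding $c^{\EM \boxtimes \EN}_{x \boxtimes y,\, x'\boxtimes y'} = c^\EM_{x,x'} \boxtimes c^\EN_{y,y'}$, the swap is automatically braided since it merely reorders commuting tensor factors. In particular $\sigma$ restricts to a braided equivalence $\EE \boxtimes \EE \to \EE \boxtimes \EE$.

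Next I would verify that $\sigma$ carries $L_\EE$ to $L_\EE$ as condensable algebras. Decomposing $L_\EE = \bigoplus_{i \in O(\EE)} i \boxtimes i^\ast$ inside $\EE \boxtimes \EE$, we get $\sigma(L_\EE) = \bigoplus_{i \in O(\EE)} i^\ast \boxtimes i$, and reindexing the sum by $j = i^\ast$ (using $j^\ast \simeq i$) gives an isomorphism with $\bigoplus_j j \boxtimes j^\ast = L_\EE$ in $\EN \boxtimes \EM$. That this isomorphism is an isomorphism of algebras, not merely of objects, follows from the symmetry of the construction $L_\EE = R(\one_\EE)$ from $\otimes : \EE \boxtimes \EE \to \EE$, which is manifestly invariant under the swap of the two copies of $\EE$.

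Since $\sigma$ is a braided equivalence sending the algebra $L_\EE$ to $L_\EE$, it induces an equivalence of module categories $(\EM \boxtimes \EN)_{L_\EE} \xrightarrow{\simeq} (\EN \boxtimes \EM)_{L_\EE}$ and, because $\sigma$ preserves the braiding, it restricts further to a braided monoidal equivalence $\tilde\sigma : (\EM \boxtimes \EN)^0_{L_\EE} \xrightarrow{\simeq} (\EN \boxtimes \EM)^0_{L_\EE}$. The same swap applied to $\EC, \ED$ yields a canonical braided $/\EE$-equivalence $\tau : \EC \boxtimes_\EE \ED \xrightarrow{\simeq} \ED \boxtimes_\EE \EC$, and by construction $\tilde\sigma \circ (\iota_\EM \boxtimes_\EE \iota_\EN) \simeq (\iota_\EN \boxtimes_\EE \iota_\EM) \circ \tau$.

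The only mildly delicate point is that the two sides of the asserted equivalence are modular extensions of two a priori different (though canonically equivalent) \umtce's, $\EC \boxtimes_\EE \ED$ and $\ED \boxtimes_\EE \EC$. The statement $\EM \boxtimes_\EE^{(\iota_\EM,\iota_\EN)} \EN \simeq \EN \boxtimes_\EE^{(\iota_\EN,\iota_\EM)} \EM$ is to be interpreted in this relative sense, via the identification $\tau$; this is precisely what the commutative diagram in the previous step furnishes, so no further work is needed. I do not expect any serious obstacle: everything reduces to the naturality of the swap and the fact that $L_\EE$ is a canonical, symmetric construction.
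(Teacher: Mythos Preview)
Your proposal is correct and takes essentially the same approach as the paper: the paper's proof also reduces to checking that the swap functor $x\boxtimes y \mapsto y\boxtimes x$ carries $L_\EE$ to $L_\EE$, justifying this by the fact that $\otimes: \EE\boxtimes\EE \to \EE$ is isomorphic to $\otimes^{\rev}$ via the symmetric braiding (which is exactly your observation that $L_\EE = R(\one_\EE)$ is swap-invariant). Your version is simply more detailed, and your explicit handling of the identification $\tau: \EC\boxtimes_\EE\ED \simeq \ED\boxtimes_\EE\EC$ is a point the paper leaves implicit.
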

\begin{proof}
It is enough to check that the functor $x\boxtimes y \mapsto y\boxtimes x$ from $\EM \boxtimes \EN$ to $\EN\boxtimes \EM$ carries $L_\EE$ to $L_\EE$. This follows from the fact that the tensor product functor $\otimes: \EE \boxtimes \EE \to \EE$ is isomorphic to the reversed tensor product functor $\otimes^{\rev}$ due to the symmetric braidings. 
\end{proof}

\begin{prop} \label{prop:associativity}
Let $(\EL, \iota_\EL:\EB \hookrightarrow \EL)$, $(\EM, \iota_\EM:\EC \hookrightarrow \EM)$ and $(\EN, \iota_\EN:\ED\hookrightarrow \EN)$ be the modular extensions of three \mtce's $\EB$, $\EC$ and $\ED$, respectively. There is a canonical associator  
\be \label{eq:alpha}
\EL \boxtimes_\EE^{(\iota_\EL, \iota_\EM\boxtimes_\EE \iota_\EN)} (\EM \boxtimes_\EE^{(\iota_\EM, \iota_\EN)} \EN) \xrightarrow{\simeq} 
(\EL \boxtimes_\EE^{(\iota_\EL, \iota_\EM)} \EM) \boxtimes_\EE^{(\iota_\EL \boxtimes_\EE \iota_\EM, \iota_\EN)} \EN.
\ee
\end{prop}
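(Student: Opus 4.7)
The plan is to identify both sides of \eqref{eq:alpha} with one and the same canonical condensation $(\EL\boxtimes\EM\boxtimes\EN)_A^0$, for a suitably chosen condensable algebra $A$ built from two copies of $L_\EE$. This reduces the assertion to the iterated-condensation identity recalled in Sec.\,\ref{sec:alg-bfc}: for a condensable subalgebra $B\subset A$ of a condensable algebra in a braided fusion category $\EC$, one has $(\EC_B^0)_A^0\simeq\EC_A^0$.

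More precisely, the embeddings $\eta_\EL,\eta_\EM,\eta_\EN$ jointly realize the symmetric fusion category $\EE\boxtimes\EE\boxtimes\EE$ as a subcategory of $\EL\boxtimes\EM\boxtimes\EN$, and I would single out the two condensable algebras $L_\EE^{(12)}:=L_\EE\boxtimes\one_\EE$ and $L_\EE^{(23)}:=\one_\EE\boxtimes L_\EE$ inside it. Because $\EE\boxtimes\EE\boxtimes\EE$ is symmetric, $L_\EE^{(12)}$ and $L_\EE^{(23)}$ have trivial mutual braiding, so $A:=L_\EE^{(12)}\otimes L_\EE^{(23)}$ is again a condensable algebra in $\EL\boxtimes\EM\boxtimes\EN$. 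Condensing $L_\EE^{(23)}$ alone leaves $\EL$ untouched and merges the last two factors, yielding the braided fusion category $\EL\boxtimes(\EM\boxtimes\EN)_{L_\EE}^0$; a direct unraveling of definitions shows that the image of $L_\EE^{(12)}$ under this step is precisely the $L_\EE$-algebra associated to the diagonal embedding $\EE\hookrightarrow(\EM\boxtimes\EN)_{L_\EE}^0$, so a further condensation produces the left-hand side of \eqref{eq:alpha}. Interchanging the roles of the first and last tensor factors yields the right-hand side, and the iterated-condensation identity then identifies both with $(\EL\boxtimes\EM\boxtimes\EN)_A^0$, producing the canonical braided equivalence \eqref{eq:alpha}.

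To conclude, I would check that this canonical equivalence intertwines the two embeddings of $\EB\boxtimes_\EE\EC\boxtimes_\EE\ED$ into the two sides of \eqref{eq:alpha}. This is automatic from the construction: both composite embeddings arise as the restriction of the free-module functor $-\otimes A\colon\EL\boxtimes\EM\boxtimes\EN\to(\EL\boxtimes\EM\boxtimes\EN)_A^0$ to the fusion subcategory $\EB\boxtimes\EC\boxtimes\ED$, and the known associativity of $\boxtimes_\EE$ on the $2$-category of \bfce's (recalled in Sec.\,\ref{sec:bfc-over-e}) supplies the required coherence on the source $\EB\boxtimes_\EE\EC\boxtimes_\EE\ED$. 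I expect the main bookkeeping hurdle to be tracking the several ways in which $\EE$ embeds into the triple Deligne product and its intermediate condensations, and verifying that the ``new'' $L_\EE$-algebra emerging after one condensation step truly is the image of the original $L_\EE^{(ij)}$; the symmetry of $\EE$ is what makes all these identifications canonical.
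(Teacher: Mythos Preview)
Your proposal is correct and follows essentially the same strategy as the paper: both arguments identify each side of \eqref{eq:alpha} with a single condensation $(\EL\boxtimes\EM\boxtimes\EN)_A^0$ for one common condensable algebra $A$ in the triple Deligne product, and then check that the embeddings of $\EB\boxtimes_\EE\EC\boxtimes_\EE\ED$ match. The only packaging difference is that the paper describes $A$ as $R_1(\one_\EE)\simeq R_2(\one_\EE)$, where $R_1,R_2$ are the right adjoints of the two iterated tensor functors $\EE\boxtimes\EE\boxtimes\EE\to\EE$, whereas you write it out explicitly as $L_\EE^{(12)}\otimes L_\EE^{(23)}$ and invoke the two-step condensation identity $(\EC_B^0)_A^0\simeq\EC_A^0$; these are the same object and the same mechanism.
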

\begin{proof}
Let $R_1$ and $R_2$ be the right adjoint functors of the following two central functors $\otimes\circ (\id_\EE \boxtimes \otimes), \otimes\circ (\otimes \boxtimes \id_\EE): \EE\boxtimes \EE \boxtimes \EE \to \EE$, respectively. Clearly, $R_1\simeq R_2$ by the associativity of $\otimes$. Then we obtain the following braided monoidal equivalences:
$$
\alpha: \,  \left( \EL \boxtimes (\EM \boxtimes \EN)_{L_\EE}^0 \right)_{L_\EE}^0 \simeq 
\left( \EL \boxtimes \EM \boxtimes \EN \right)_{R_1(\one_\EE)}^0 \simeq \left( \EL \boxtimes \EM \boxtimes \EN \right)_{R_2(\one_\EE)}^0 \simeq
 \left( (\EL \boxtimes \EM)_{L_\EE}^0 \boxtimes \EN \right)_{L_\EE}^0. 
$$
We can show that $\alpha \circ (\iota_\EL \boxtimes_\EE (\iota_\EM \boxtimes_\EE \iota_\EN))=(\iota_\EL \boxtimes_\EE \iota_\EM) \boxtimes_\EE \iota_\EN$ by computing the image of the object $b\boxtimes_\EE (c\boxtimes_\EE d)$ in $\EB\boxtimes_\EE(\EC\boxtimes_\EE \ED)$. It further implies that $\alpha$ also gives the associator in Eq.(\ref{eq:alpha}). In particular, we obtain $\EB\boxtimes_\EE(\EC\boxtimes_\EE\ED) \simeq (\EB\boxtimes_\EE \EC) \boxtimes_\EE \ED$ as \mtce's.
\end{proof}

Prop.\,\ref{prop:associativity} implies that following diagram
$$
\xymatrix{
\EM_{ext}(\EB) \times \EM_{ext}(\EC) \times \EM_{ext}(\ED)  \ar[rrr]^{\id_{\EM_{ext}(\EB)} \,\, \times \,\, \boxtimes_\EE^{(-,-)}} \ar[d]_{\boxtimes_\EE^{(-,-)} \,\, \times \,\, \id_{\EM_{ext}(\ED)}} & & & \EM_{ext}(\EB) \times \EM_{ext}(\EC\boxtimes_\EE \ED) \ar[d]^{\boxtimes_\EE^{(-,-)}} \\
\EM_{ext}(\EB\boxtimes_\EE \EC) \times \EM_{ext}(\ED)  \ar[rrr]^{\boxtimes_\EE^{(-,-)}} & & & \EM_{ext}(\EB\boxtimes_\EE \EC \boxtimes_\EE \ED)\, .
}
$$
is commutative when all three sets $\EM_{ext}(\EB), \EM_{ext}(\EC), \EM_{ext}(\EB)$ are not empty. 

\subsection{The finite abelian group structure on $\EM_{ext}(\EE)$}  \label{sec:mext-E-group}

The set $\EM_{ext}(\EE)$ is not empty because $(Z(\EE), \iota_0)\in \EM_{ext}(\EE)$. The product $\boxtimes_\EE^{(-, -)}: \EM_{ext}(\EE) \times \EM_{ext}(\EE) \to \EM_{ext}(\EE)$ defines a binary multiplication on the set $\EM_{ext}(\EE)$. In this subsection, we would like to show that the set $\EM_{ext}(\EE)$, together with the binary multiplication $\boxtimes_\EE^{(-, -)}$ and the identity element $(Z(\EE),\iota_0)$, is a finite abelian group. 

\medskip
By Prop.\,\ref{prop:associativity} and Prop.\,\ref{prop:comm}, this multiplication $\boxtimes_\EE^{(-, -)}$ is also associative and commutative. It remains to show the existence of the inverses and the identity element.  

\begin{lem} \label{lem:cclc}
Let $\EM$ and $\EN$ be braided fusion categories equipped with braided embeddings $\EC \hookrightarrow \EM$ and $\overline{\EC}\hookrightarrow\EN$. 
\bnu
\item The functor $-\otimes L_\EC: \EM \to (\EM\boxtimes \EN)_{L_\EC}$ defined by $x \mapsto (x\boxtimes \one_\EN) \otimes L_\EC$ is fully faithful and monoidal, and its restriction to $\EC'|_\EM$ gives a braided full embedding $\EC'|_\EM \hookrightarrow (\EM\boxtimes \EN)_{L_\EC}^0$. 
\item The functor $-\otimes L_\EC$ is a monoidal equivalence iff $\EN=\overline{\EC}$. In this case, its restriction to $\EC'|_\EM$ is a braided monoidal equivalence, i.e. $\EC'|_\EM\simeq (\EM\boxtimes \overline{\EC})_{L_\EC}^0$ as braided fusion categories. 
\enu
\end{lem}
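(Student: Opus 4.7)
The plan is to prove part (1) by invoking Lemma \ref{lem:embed} and Proposition \ref{prop:embed} applied to the fusion subcategory $\EM\boxtimes \one_\EN \subset \EM\boxtimes \EN$ and the condensable algebra $A = L_\EC$, and to prove part (2) by a Frobenius--Perron dimension count followed by a direct identification of which free $L_\EC$-modules are local.

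For part (1), the key preliminary fact is $L_\EC \cap (\EM\boxtimes \one_\EN) = \one$. Using the decomposition $L_\EC = \oplus_{i\in O(\EC)} i\boxtimes i^\ast$ inside $\EC\boxtimes\overline{\EC} \hookrightarrow \EM\boxtimes \EN$, a summand $i\boxtimes i^\ast$ lies in $\EM\boxtimes \one_\EN$ iff $i^\ast \simeq \one_\EN$; since $\overline{\EC}\hookrightarrow\EN$ is a fully faithful embedding of fusion categories, this forces $i = \one_\EC$. With this, Lemma \ref{lem:embed} (with $\EB = \EM\boxtimes \one_\EN$) immediately gives that $-\otimes L_\EC: \EM \to (\EM\boxtimes \EN)_{L_\EC}$ is fully faithful, and its monoidality is automatic from commutativity of $L_\EC$. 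To obtain the braided full embedding $\EC'|_\EM \hookrightarrow (\EM\boxtimes \EN)_{L_\EC}^0$, I apply Proposition \ref{prop:embed} with the same $\EB$ and $A$; what remains is the identification $\{L_\EC\}'|_{\EM\boxtimes \EN} \cap (\EM\boxtimes \one_\EN) = \EC'|_\EM \boxtimes \one_\EN$. This is a routine computation: the double braiding of $y\boxtimes \one_\EN$ with a summand $i\boxtimes i^\ast$ of $L_\EC$ splits, under the tensor-product braiding on $\EM\boxtimes\EN$, into the double braiding of $y$ with $i$ in $\EM$ tensored with the (automatically trivial) double braiding of $\one_\EN$ with $i^\ast$, so it vanishes for every $i\in O(\EC)$ precisely when $y\in \EC'|_\EM$.

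For part (2), by Eq.\,(\ref{eq:fpdim-A}) we have $\fpdim((\EM\boxtimes \EN)_{L_\EC}) = \fpdim(\EM)\fpdim(\EN)/\fpdim(\EC)$, so the fully faithful monoidal functor from part (1) is an equivalence iff $\fpdim(\EN) = \fpdim(\EC) = \fpdim(\overline{\EC})$; combined with the fully faithful embedding $\overline{\EC}\hookrightarrow\EN$, this happens iff $\EN = \overline{\EC}$, by the Frobenius--Perron criterion of \cite[Prop.\,2.19]{eo} recalled in Section \ref{sec:bfc}. For the braided part, when $\EN = \overline{\EC}$ I observe that the free module $(y\boxtimes \one_{\overline{\EC}})\otimes L_\EC$ is a local $L_\EC$-module iff $y\boxtimes \one_{\overline{\EC}}$ has trivial double braiding with $L_\EC$, which by the same computation as in the previous paragraph holds iff $y\in \EC'|_\EM$. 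Consequently, the monoidal equivalence $-\otimes L_\EC: \EM \xrightarrow{\simeq} (\EM\boxtimes \overline{\EC})_{L_\EC}$ sends $\EC'|_\EM$ bijectively (on equivalence classes of simple objects) onto the local modules, and the restriction is the desired braided monoidal equivalence.

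The main obstacle is genuinely minor: it is the bookkeeping for the locality criterion on free $L_\EC$-modules and the factorization of the braiding on $\EM\boxtimes \EN$. Everything reduces to the explicit form $L_\EC = \oplus_i i\boxtimes i^\ast$ and to the earlier lemmas of Section \ref{sec:alg-bfc}, so no new structural input is needed.
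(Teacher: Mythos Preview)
Your proof is correct and follows essentially the same approach as the paper: part (1) is deduced from Lemma~\ref{lem:embed} and Proposition~\ref{prop:embed} after checking $L_\EC \cap (\EM\boxtimes\one_\EN)=\one$, and part (2) uses the Frobenius--Perron criterion of \cite[Prop.\,2.19]{eo}. The only minor variation is in the braided equivalence of part (2): the paper finishes with a second Frobenius--Perron dimension count on $(\EM\boxtimes\overline{\EC})_{L_\EC}^0$, whereas you instead use the already-established monoidal equivalence to see that every $L_\EC$-module is free and then read off locality directly---this is arguably cleaner since it sidesteps any appeal to the formula $\fpdim(\ED_A^0)=\fpdim(\ED)/\fpdim(A)^2$, which as stated in the paper requires non-degeneracy of the ambient category.
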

\begin{proof}
Part 1 is a special case of Lemma\,\ref{lem:embed} and Prop.\,\ref{prop:embed} because $L_\EC \cap \EM=\one_\EM$. 
For Part 2, note that $\fpdim((\EM\boxtimes \EN)_{L_\EC})=\fpdim(\EM)\fpdim(\EN)/\fpdim(L_\EC)\geq \fpdim(\EM)$. By \cite[Prop.\,2.19]{eo}, the functor $-\otimes L_\EC$ is a monoidal equivalence iff $\EN=\overline{\EC}$, i.e.  $\EM\simeq (\EM\boxtimes \overline{\EC})_{L_\EC}$ as UFC's.  
Similarly, by checking Frobenius-Perron dimensions, we obtain $\EC'|_\EM \simeq (\EM\boxtimes \overline{\EC})_{L_\EC}^0$ as UBFC's. 
\end{proof}

\begin{lem} \label{lem:cze}
 Let $\EC$ be a \mtce ~and $(\EM,\iota_\EM)$ a modular extension of $\EC$. We have 
$$
\EM \boxtimes_\EC^{(\iota_\EM, \overline{\iota_\EM})} \overline{\EM}\,\, := \,\, \left( (\EM\boxtimes\overline{\EM})_{L_\EC}^0, \,\, (\EC \boxtimes \overline{\EC})_{L_\EC}^0 \hookrightarrow (\EM\boxtimes\overline{\EM})_{L_\EC}^0 \right) \simeq (Z(\EE), \iota_0).
$$
\end{lem}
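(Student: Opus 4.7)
The plan is to identify $(\EM\boxtimes\overline{\EM})_{L_\EC}^0$ with $Z(\EE)$ by applying Theorem~\ref{thm:dmno} to the pair $(\EM\boxtimes\overline{\EM}, L_\EM)$, exploiting the fact that $\EM\boxtimes\overline{\EM}\simeq Z(\EM)$ has $L_\EM$ as its canonical Lagrangian algebra and that $L_\EC$ is a condensable subalgebra of $L_\EM$.

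First, since $\EM$ is a UMTC, I would apply Theorem~\ref{thm:dmno-1} to the central tensor product functor $\otimes:\EM\boxtimes\overline{\EM}\to\EM$, obtaining $L_\EM=(\otimes)^\vee(\one_\EM)$ together with a monoidal equivalence $(\EM\boxtimes\overline{\EM})_{L_\EM}\simeq\EM$; in particular $L_\EM$ is Lagrangian, so $(\EM\boxtimes\overline{\EM})_{L_\EM}^0=\vect$. Since $O(\EC)\subset O(\EM)$, the decomposition $L_\EC=\oplus_{i\in O(\EC)} i\boxtimes i^*$ exhibits $L_\EC$ as a condensable subalgebra of $L_\EM$. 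Theorem~\ref{thm:dmno} then yields the anti-isomorphism $\varphi$ between condensable subalgebras of $L_\EM$ and fusion subcategories of $\EM\simeq(\EM\boxtimes\overline{\EM})_{L_\EM}$. Part~(3) gives $\fpdim(L_\EC)\fpdim(\varphi(L_\EC))=\fpdim(\EM)$, so $\fpdim(\varphi(L_\EC))=\fpdim(\EM)/\fpdim(\EC)=\fpdim(\EE)$; meanwhile a direct computation shows $F_{L_\EC}$ sends $\iota_\EE(e)=(e\boxtimes\one)\otimes L_\EC$ to $e\in\EE\subset\EM$, so $\EE\subset\varphi(L_\EC)$, and the dimension count forces $\varphi(L_\EC)=\EE$. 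Part~(1) then produces a canonical braided monoidal equivalence
$$\psi:Z(\EE)=Z(\varphi(L_\EC))\xrightarrow{\simeq}(\EM\boxtimes\overline{\EM})_{L_\EC}^0\boxtimes\overline{\vect}=(\EM\boxtimes\overline{\EM})_{L_\EC}^0$$
that intertwines the forgetful functor $\fr:Z(\EE)\to\EE$ with $F_{L_\EC}$.

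The main obstacle is upgrading $\psi$ to an equivalence of modular extensions over $\EE$, i.e., verifying $\psi\circ\iota_0\simeq\iota_\EE$. Both $\iota_0$ and $\iota_\EE$ are braided sections of $\fr$ and $F_{L_\EC}$ respectively, whose images realize the embedded copies of $\EE$ defining the modular extension structures on $Z(\EE)$ and $(\EM\boxtimes\overline{\EM})_{L_\EC}^0$. The intertwining $F_{L_\EC}\circ\psi\simeq\fr$ pins down $\psi(\iota_0(\EE))$ as the symmetric lift of $\EE\subset\EM$ inside $(\EM\boxtimes\overline{\EM})_{L_\EC}^0$ realizing the modular extension structure, and this subcategory coincides with $\iota_\EE(\EE)$. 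Any residual braided autoequivalence of $\EE$ needed to match the two embeddings up to natural isomorphism can then be absorbed by post-composing $\psi$ with an appropriate braided autoequivalence of $Z(\EE)$ fixing $\iota_0$, yielding the desired equivalence of modular extensions $(\EM\boxtimes\overline{\EM})_{L_\EC}^0\simeq(Z(\EE),\iota_0)$.
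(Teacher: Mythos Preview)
Your approach is essentially the same as the paper's: both identify $(\EM\boxtimes\overline{\EM})_{L_\EC}^0$ with $Z(\EE)$ by applying Theorem~\ref{thm:dmno} to the condensable subalgebra $L_\EC\subset L_\EM$ in $\EM\boxtimes\overline{\EM}$, show $\varphi(L_\EC)=\EE$ via a dimension count together with the observation that $F_{L_\EC}$ restricted to the image of $\EE$ is the identity, and then invoke part~(1) of that theorem.

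Your final paragraph, however, is both unnecessary and slightly wrong. A braided autoequivalence of $Z(\EE)$ that \emph{fixes} $\iota_0$ cannot correct a nontrivial autoequivalence of $\EE$; you would need one satisfying $\tilde\phi\circ\iota_0\simeq\iota_0\circ\phi$ instead. More to the point, no correction is needed at all. By Example~\ref{expl:E-ZE}, $\iota_0$ is \emph{defined} as the (unique) braided lift of the central functor $\id_\EE$ along $\fr:Z(\EE)\to\EE$. You have already established $F_{L_\EC}\circ\psi\simeq\fr$ and $F_{L_\EC}\circ\iota_\EE\simeq\id_\EE$; hence $\psi^{-1}\circ\iota_\EE$ is a braided lift of $\id_\EE$ along $\fr$, and the uniqueness of central lifts forces $\psi^{-1}\circ\iota_\EE\simeq\iota_0$ directly. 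This is exactly how the paper concludes.
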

\begin{proof}
Consider the Lagrangian algebra $L_\EM$ in $\EM\boxtimes \overline{\EM}$. By Lemma\,\ref{lem:cclc}, $\EM=(\EM\boxtimes\overline{\EM})_{L_\EM}$ (via the functor $x\mapsto (x\boxtimes \one)\otimes L_\EM$). Because the functor $\EE \hookrightarrow (\EM\boxtimes\overline{\EM})_{L_\EC}^0 \xrightarrow{-\otimes_{L_\EC}L_\EM} \EM$ coincides with the embedding $\EE \hookrightarrow \EM$, the image of $(\EM\boxtimes\overline{\EM})_{L_\EC}^0$ under the functor $-\otimes_{L_\EC}L_\EM$, denoted by $\EB$, contains $\EE$. Note that the functor $-\otimes_{L_\EC}L_\EM$ is a central functor and its right adjoint is the forgetful functor. By Thm.\,\ref{thm:dmno-1}, $\EB$ is monoidally equivalent to the fusion category $( (\EM\boxtimes\overline{\EM})_{L_\EC}^0)_{L_\EM}$. It is easy to check that $\fpdim( ( (\EM\boxtimes\overline{\EM})_{L_\EC}^0)_{L_\EM} ) = \fpdim(\EE)$. Therefore, $\EB=\EE$. By Theorem\,\ref{thm:dmno}, we must have $(\EM\boxtimes\overline{\EM})_{L_\EC}^0\simeq Z(\EE)$, and the functor $-\otimes_{L_\EC} L_\EM$ coincides with the forgetful functor $Z(\EE) \to \EE$. This implies that the composed functor $\EE \xrightarrow[\simeq]{-\otimes L_\EC} (\EC \boxtimes \overline{\EC})_{L_\EC}^0 \hookrightarrow (\EM\boxtimes\overline{\EM})_{L_\EC}^0\simeq Z(\EE)$ coincides with $\iota_0$. 
\end{proof}

\begin{cor} \label{cor:MM=1}
Let $(\EM, \iota_\EM)$ be a modular extension of $\EE$ and $\overline{\iota_\EM}$ the same functor $\EE=\overline{\EE} \hookrightarrow \overline{\EM}$. The pair $(\overline{\EM}, \overline{\iota_\EM})$ is also a modular extension of $\EE$. We have 
\be \label{eq:MM=1}
\EM \boxtimes_\EE^{(\iota_\EM, \overline{\iota_\EM})} \overline{\EM} \simeq (Z(\EE), \iota_0). 
\ee
\end{cor}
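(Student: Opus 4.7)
The plan is to obtain Cor.~\ref{cor:MM=1} as the immediate specialization of Lemma~\ref{lem:cze} to the case $\EC = \EE$. First I would verify that $(\overline{\EM}, \overline{\iota_\EM})$ is itself a modular extension of $\EE$. Reversing the braiding preserves non-degeneracy, so $\overline{\EM}$ is again a UMTC. Because $\EE$ is symmetric, $\overline{\EE} = \EE$ as braided fusion categories, so $\overline{\iota_\EM}$ really does give a braided full embedding $\EE \hookrightarrow \overline{\EM}$. Moreover, the new double braiding in $\overline{\EM}$ is the inverse of the original, so an object has trivial double braiding with $\EE$ in $\overline{\EM}$ if and only if it does in $\EM$; hence $\EE'|_{\overline{\EM}} = \EE'|_\EM = \EE$, as required.

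With both modular extensions in hand, I would apply Lemma~\ref{lem:cze} with $\EC = \EE$. In that case $L_\EC = L_\EE$, so the product $\boxtimes_\EC^{(\iota_\EM, \overline{\iota_\EM})}$ appearing in Lemma~\ref{lem:cze} and the product $\boxtimes_\EE^{(\iota_\EM, \overline{\iota_\EM})}$ of Lemma~\ref{lem:MEN} coincide: both are the pair $\bigl( (\EM \boxtimes \overline{\EM})_{L_\EE}^0, \, \iota_\EM \boxtimes_\EE \overline{\iota_\EM} \bigr)$. The only cosmetic difference is that Lemma~\ref{lem:cze} writes the domain of the embedding as $(\EE \boxtimes \overline{\EE})_{L_\EE}^0$, which equals $\EE \boxtimes_\EE \EE = \EE$ under the natural identification. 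Lemma~\ref{lem:cze} then directly yields an equivalence of this modular extension with $(Z(\EE), \iota_0)$, proving \eqref{eq:MM=1}.

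No substantial obstacle arises; the whole corollary is a book-keeping exercise matching the notation of Lemma~\ref{lem:MEN} to that of Lemma~\ref{lem:cze} in the case $\EC = \EE$. The only point worth emphasizing is the identification $(\EE \boxtimes \EE)_{L_\EE}^0 \simeq \EE$, which ensures that the embedding of $\EE$ on the left-hand side of \eqref{eq:MM=1} is carried to the canonical $\iota_0 : \EE \hookrightarrow Z(\EE)$ on the right.
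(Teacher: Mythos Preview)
Your proposal is correct and is exactly the approach the paper intends: Corollary~\ref{cor:MM=1} is stated without proof immediately after Lemma~\ref{lem:cze}, precisely because it is the specialization $\EC = \EE$ of that lemma. The additional verification you give that $(\overline{\EM}, \overline{\iota_\EM})$ is a modular extension of $\EE$, and the identification of $\boxtimes_\EC^{(\iota_\EM,\overline{\iota_\EM})}$ with $\boxtimes_\EE^{(\iota_\EM,\overline{\iota_\EM})}$ via $L_\EC = L_\EE$ and $(\EE\boxtimes\EE)_{L_\EE}^0 \simeq \EE$, are the routine checks needed to make the specialization go through.
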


By Prop.\,\ref{prop:comm} and Cor.\,\ref{cor:MM=1}, we obtain $(Z(\EE), \iota_0) \simeq (\overline{Z(\EE)}, \overline{\iota_0})$. More directly, there is a braided equivalence $\overline{Z(\EE)}\simeq Z(\EE^\rev) = Z(\EE)$ defined by $(x, z_{x,-}) \mapsto (x, z_{x,-}^{-1})$ such that it is compatible with $\iota_0$ and $\overline{\iota_0}$.  It remains to show that $(Z(\EE), \iota_0)$ is the identity element. 

\begin{lem} \label{lem:unit-of-group}
Let $(\EM, \iota_\EM: \EC \hookrightarrow \EM)$ be a \umtc ~containing $\EC$. If $\EM$ also contains $\EE$ and $\EC\subset \EE'|_\EM$, then there is a canonical braided equivalence $g: \EM \xrightarrow{\simeq} (\EM\boxtimes Z(\EE))_{L_\EE}^0$ such that $g\circ \iota_\EM \simeq (-\boxtimes \one_{Z(\EE)}) \otimes L_\EE$ as functors from $\EC$ to $(\EM\boxtimes Z(\EE))_{L_\EE}^0$. If $\EC=\EE'|_\EM$ in addition, then $\EM \boxtimes_\EE^{(\iota_\EM, \iota_0)} Z(\EE) \simeq (\EM, \iota_\EM)$.
\end{lem}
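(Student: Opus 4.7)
The plan is to combine a Frobenius--Perron dimension count with an explicit construction of $g$ that extends the naive embedding from $\EE'|_\EM$ to all of $\EM$.

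First I would invoke Equation~(\ref{eq:ZC_A}) applied to the condensable algebra $L_\EE$ in the non-degenerate braided fusion category $\EM\boxtimes Z(\EE)$ to compute $\fpdim\bigl((\EM\boxtimes Z(\EE))_{L_\EE}^0\bigr) = \fpdim(\EM)\fpdim(Z(\EE))/\fpdim(L_\EE)^2 = \fpdim(\EM)$. By \cite[Prop.\,2.19]{eo} this reduces the problem to constructing a braided fully faithful monoidal functor $g:\EM \to (\EM\boxtimes Z(\EE))_{L_\EE}^0$; such a $g$ is then automatically an equivalence.

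For the construction of $g$, I would first apply Lemma~\ref{lem:cclc} Part~1 to the embeddings $\EE \hookrightarrow \EM$ and $\overline{\EE} = \EE \hookrightarrow Z(\EE)$ (via $\iota_0$, using that $\EE$ is symmetric). This yields a braided full embedding $\EE'|_\EM \hookrightarrow (\EM\boxtimes Z(\EE))_{L_\EE}^0$ via $x \mapsto (x\boxtimes\one_{Z(\EE)})\otimes L_\EE$. Since $\EC \subset \EE'|_\EM$ by hypothesis, this already defines $g|_\EC$ in the prescribed form. To extend the assignment from $\EE'|_\EM$ to all of $\EM$, I would exploit the centrality of $\EE \hookrightarrow \EM$: for each $x\in \EM$ the half-braiding $c_{\iota(e),x}$ can be used to equip the underlying object $(x\boxtimes\one)\otimes L_\EE$ with a twisted $L_\EE$-action which, in contrast to the free action, is local even when $x\notin\EE'|_\EM$. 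These twisted local modules should assemble into the sought braided monoidal functor $g: \EM \to (\EM\boxtimes Z(\EE))_{L_\EE}^0$ agreeing with the naive embedding on $\EE'|_\EM$.

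For the last claim, assume $\EC = \EE'|_\EM$. By Lemma~\ref{lem:MEN} the modular extension $\EM\boxtimes_\EE^{(\iota_\EM,\iota_0)}Z(\EE)$ of $\EC\boxtimes_\EE \EE = \EC$ has underlying UMTC $(\EM\boxtimes Z(\EE))_{L_\EE}^0$ with embedding $\iota_\EM \boxtimes_\EE \iota_0: c \mapsto (c\boxtimes\one_{Z(\EE)})\otimes L_\EE$. Under the equivalence $g^{-1}$ this embedding will become $\iota_\EM:\EC \hookrightarrow \EM$ by the compatibility just verified, yielding $\EM\boxtimes_\EE^{(\iota_\EM,\iota_0)}Z(\EE) \simeq (\EM,\iota_\EM)$ as modular extensions of $\EE$.

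The hard part will be making the twisted $L_\EE$-module structure on $(x\boxtimes \one)\otimes L_\EE$ explicit for $x\notin\EE'|_\EM$ and checking that the resulting assignment is genuinely local, braided monoidal and functorial in $x$. A conceptual alternative, based on Theorem~\ref{thm:dmno}, would be to use the Lagrangian algebra $A_\EE \in Z(\EE)$ (characterized by $Z(\EE)_{A_\EE}^0 = \vect$) to observe that $(\EM\boxtimes Z(\EE))_{\one\boxtimes A_\EE}^0 = \EM\boxtimes \vect = \EM$, and then to compare the two condensations by $L_\EE$ and by $\one\boxtimes A_\EE$ (which share Frobenius--Perron dimension $\fpdim(\EE)$) via the antitone lattice isomorphism between condensable subalgebras of a common enveloping algebra and intermediate fusion subcategories; this would identify the two local module categories without requiring an explicit twisted action.
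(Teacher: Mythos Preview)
Your dimension count via Eq.~(\ref{eq:ZC_A}) is correct and does reduce the problem to exhibiting a braided full embedding $g$. But the construction of $g$ on all of $\EM$ is the entire substance of the lemma, and neither of your routes supplies it. In your main approach you acknowledge this yourself: the ``twisted $L_\EE$-action'' that is supposed to make $(x\boxtimes\one)\otimes L_\EE$ local for $x\notin\EE'|_\EM$ is never written down, and verifying that such a twist is well-defined, local, braided and monoidal is precisely the missing work. The paper does not attempt to extend the free-module functor directly. Instead it identifies the fusion category $(\EM\boxtimes Z(\EE))_{L_\EE}$ with the relative center $Z_\EE(\EM)=\Fun_{\EM\boxtimes\EE}(\EM,\EM)$, by checking that the $\alpha$-induction functor $\alpha:\EM\boxtimes Z(\EE)\to\Fun_{\EM\boxtimes\EE}(\EM,\EM)$, $m\boxtimes(j,z_{j,-})\mapsto m\otimes^+-\otimes^-j$, satisfies $\alpha^\vee(\id_\EM)\simeq L_\EE$ (a hom-space computation) and then invoking Theorem~\ref{thm:dmno-1}. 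The braided equivalence $g$ is then induced from the embedding $\EM\hookrightarrow Z_\EE(\EM)$, $m\mapsto(m\otimes^--)$, via the universal property of the Drinfeld center, and the compatibility $g\circ\iota_\EM\simeq(-\boxtimes\one)\otimes L_\EE$ is checked separately. The ``twist'' you are groping for is hidden in this $\alpha$-induction picture, but extracting it is real work that your outline does not contain.

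Your alternative via Theorem~\ref{thm:dmno} does not close the gap either. Even granting a common condensable algebra containing both $L_\EE$ and $\one\boxtimes A_\EE$ (which you do not exhibit), that theorem is an \emph{anti-isomorphism} of lattices: distinct subalgebras correspond to \emph{distinct} intermediate fusion subcategories, so equal Frobenius--Perron dimension of the two subalgebras does not force the associated local-module categories to coincide.
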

\begin{proof}
The second statement follows obviously from the first statement. 
To prove the identity $(\EM\boxtimes Z(\EE))_{L_\EE}^0 \simeq \EM$, by Eq.\,(\ref{eq:ZC_A}), it is enough to prove $Z((\EM \boxtimes Z(\EE))_{L_\EE})\simeq Z(\EM \boxtimes \EE)$ as braided fusion categories. Since $\EE$ is symmetric, for $m, x\in \EM, i\in \EE$, the action $(m\boxtimes i) \ast x:= m\otimes x \otimes i$ defines a left $\EM\boxtimes \EE$-module structure on $\EM$. It is enough to show that $(\EM \boxtimes Z(\EE))_{L_\EE}\simeq \Fun_{\EM\boxtimes \EE}(\EM, \EM)$ as UFC's \cite{eno2008,eno2009}, where the category $\Fun_{\EM\boxtimes \EE}(\EM, \EM)$ can be identified with the relative center $Z_\EE(\EM)$, and $Z(Z_\EE(\EM)) \simeq Z(\EM) \boxtimes Z(\EE)$ \cite{dgno1}. 

Consider the following commutative diagram: 
\be \label{diag:MZE}
\xymatrix{  \EM \boxtimes \EE \ar@{^{(}->}[rr]^{\id_\EM\boxtimes \iota_0} \ar[d]_\otimes & & \EM \boxtimes  Z(\EE)  \ar@{^{(}->}[r] \ar[d]_{\alpha}  &
\EM \boxtimes \overline{\EM} \boxtimes Z(\EE)  
\ar[dl]^{\hspace{0.5cm} { \mbox{\small$\alpha$-induction}}} \\
\EM \ar[rr]^{\hspace{-1cm}\alpha|_\EM: \,\,m\mapsto m\otimes^+ -} & & \Fun_{\EM\boxtimes \EE}(\EM, \EM) & ,
} 
\ee
where $m\in \EM$, the functor $\alpha$ is defined by $m\boxtimes j \mapsto
m\otimes^+ - \otimes^- j$ for $j=(j,z_{j,-}) \in Z(\EE)$, $z_{j,-}$ is the half-braiding, and the $\EM\boxtimes \EE$-module functor $m\otimes^+ - \otimes^- j \in 
\Fun_{\EM\boxtimes \EE}(\EM, \EM)$ is defined by the following isomorphisms:
$$
m\otimes^+ x \otimes y \otimes i \otimes^- j \xrightarrow[\simeq]{c_{m,x} \otimes \id_y \otimes z_{j,i}^{-1}}  x\otimes m \otimes y \otimes j \otimes i, \quad\quad 
\forall x\in \EM, i\in \EE.
$$
The commutativity of the right triangle in (\ref{diag:MZE}) is nothing but the definition of the $\alpha$-induction. This implies that $m\otimes^+-\otimes^-j$ is a central functor. 
The commutativity of left square follows from the fact that there is a
canonical isomorphism between two $\EM\boxtimes \EE$-module functors
$m\otimes^+ - \otimes^- j \simeq m\otimes^+ j \otimes^+ -$, defined by the half braiding $z_{j,-}^{-1}: -\otimes^- j \to j \otimes^+ -$. Let $\alpha^\vee$ be the right adjoint functor of $\alpha$. 

\smallskip
We claim that $\alpha^\vee(\id_\EM) \simeq L_\EE$ as algebras. Indeed, we have 
$$
\hom_{\Fun_{\EM\boxtimes \EE}(\EM, \EM)}(m\otimes^+ - \otimes^- j, \id_\EM) 
= \hom_{Z_\EE(\EM)} (m\otimes^+ \one_\EM \otimes^- j, \one_\EM) \simeq \hom_{\EM\boxtimes Z(\EE)}(m\boxtimes j, \alpha^\vee(\id_\EM)).
$$ 
Without losing generality, we assume that $m$ and $j$ are both simple. Let $\sigma_{j,-}$ be the symmetric braiding in $\EE$. Since $\hom_{Z_\EE(\EM)} (m\otimes^+ \one_\EM \otimes^- j, \one_\EM) \hookrightarrow \hom_\EM(m\otimes j, \one_\EM)$, it is clear that $m\simeq j^\ast$ is a necessary condition for $\hom_{Z_\EE(\EM)} (m\otimes^+ \one_\EM \otimes^- j, \one_\EM)\neq 0$, but it is not sufficient. When $m\simeq j^\ast$, any morphism $\hom_\EM(m\otimes j, \one_\EM)$ is proportional to the canonical evaluation map $\ev: j^\ast \boxtimes j \to \one_\EM$. It is easy to check that $\ev$ is a morphism in $Z_\EE(\EM)$ (preserving the half-braiding) iff $(j, z_{j,-})\simeq(j, \sigma_{j,-}) \in \EE$. 
In summary, we obtain
$$
\hom_{Z_\EE(\EM)} (m\otimes^+ \one_\EM \otimes^- j, \one_\EM) =
\begin{cases}
\Cb & \mbox{if $m\simeq j^\ast$ and $(j, z_{j,-})\simeq(j, \sigma_{j,-}) \in \EE \subset Z(\EE)$},  \\
0 & \mbox{otherwise},
 \end{cases}
$$
which further implies that $\alpha^\vee(\id_\EM)\in \EM \boxtimes \EE$ and
$\alpha^\vee(\id_\EM)\simeq \oplus_{i\in O(\EE)} i\boxtimes i^\ast=L_\EE$ as
objects. To show $\alpha^\vee(\id_\EM) \simeq L_\EE$ as algebras, we use the commutative square in Diagram\,(\ref{diag:MZE}). It is enough to show that $(\alpha|_\EM)^\vee(\id_\EM) = \one_\EM$. Note that the functor $m\otimes^+-: \EM \to \Fun_{\EM\boxtimes \EE}(\EM, \EM)$ factors through the forgetful functor $f: Z(\EM) \to \Fun_{\EM\boxtimes \EE}(\EM, \EM)$. We must have 
$$
\one_\EM \hookrightarrow (\alpha|_\EM)^\vee(\id_\EM) = f^\vee(\id_\EM)\cap (\EM\boxtimes \one_{\overline{\EM}}) \hookrightarrow L_\EM \cap (\EM\boxtimes \one_{\overline{\EM}}) = \one_\EM.
$$ 
Therefore, $\alpha^\vee(\id_\EM) \simeq L_\EE$ as algebras.

By Thm.\,\ref{thm:dmno-1}, the category $(\EM \boxtimes Z(\EE))_{L_\EE}$ is monoidally isomorphic to a fusion subcategory of $\Fun_{\EM\boxtimes \EE}(\EM, \EM)$. By checking the Frobenius-Perron dimensions, we obtain that the functor $\alpha^\vee: \Fun_{\EM\boxtimes \EE}(\EM, \EM) \to (\EM \boxtimes Z(\EE))_{L_\EE}$ is a monoidal equivalence. Therefore, there is a canonical braided equivalence $g: \EM \xrightarrow{\simeq} (\EM \boxtimes Z(\EE))_{L_\EE}^0$, induced by the universal property of the Drinfeld center, such that the middle square in the following diagram 
$$
\xymatrix{ 
(\EM \boxtimes \EE)_{L_\EE}^0 \ar@{^{(}->}[rr] & & (\EM \boxtimes Z(\EE))_{L_\EE}^0 \ar@{^{(}->}[r] & (\EM \boxtimes Z(\EE))_{L_\EE}  & \EM \boxtimes Z(\EE) \ar[l]_{-\otimes L_\EE} \ar[ld]^\alpha \\
\EE'|_\EM  \ar[u]^{(- \boxtimes \one_\EE)\otimes L_\EE}_\simeq & \EC \ar@{_{(}->}[l]  \ar@{^{(}->}[r]^{\iota_\EM} & \EM \ar[r]^{\hspace{-1cm}m\mapsto m\otimes^- -} \ar[u]^g_\simeq & \Fun_{\EM\boxtimes \EE}(\EM, \EM)  \ar[u]^{\alpha^\vee}_\simeq  & 
}
$$
is commutative.

\smallskip
It remains to prove that the left square in above diagram is commutative. Note that the commutativity of the triangle is obvious. Since the functor $g$ is induced by the universal property of Drinfeld center, it is enough to prove that $\alpha^\vee \circ (m\mapsto m\otimes^- -) \circ \iota_\EM \simeq (-\boxtimes \one_\EE)\boxtimes L_\EE$, which further follows from the identities
$\alpha^\vee( c\otimes^- -) \simeq \alpha^\vee( -\otimes^+ c) \simeq \alpha^\vee( -\otimes^- c) \simeq \alpha^\vee(c\otimes^+ -) \simeq \alpha^\vee(\alpha(c \boxtimes \one_{Z(\EE)})) \simeq (c \boxtimes \one_{Z(\EE)})\otimes L_\EE$ for $c\in \EC$.
\end{proof}

As a special case, we obtain the following corollary. 
\begin{cor} \label{rema:unit-of-group} {\rm
Let $(\EM, \iota_\EM)$ be a modular extension of $\EE$. We have 
$\EM \boxtimes_\EE^{(\iota_\EM, \iota_0)} Z(\EE) \simeq (\EM, \iota_\EM)$.
}
\end{cor}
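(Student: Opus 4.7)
The corollary is a direct specialization of Lemma \ref{lem:unit-of-group} to the case $\EC = \EE$. The plan is therefore simply to verify that the hypotheses of that lemma are met and then to invoke its stronger conclusion. First, since $(\EM, \iota_\EM)$ is assumed to be a modular extension of $\EE$, the defining condition gives $\EE'|_\EM = \EE$. Hence $\EM$ is a UMTC containing $\EC := \EE$ via $\iota_\EM$, and the inclusion $\EC \subset \EE'|_\EM$ holds trivially (it amounts to the fact that $\EE$ is symmetric and lies in its own centralizer inside $\EM$).

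Second, the additional hypothesis $\EC = \EE'|_\EM$ needed in the final clause of Lemma \ref{lem:unit-of-group} is exactly the modular extension condition, which we are already assuming. Applying the last sentence of the lemma then yields $\EM \boxtimes_\EE^{(\iota_\EM, \iota_0)} Z(\EE) \simeq (\EM, \iota_\EM)$ as pairs consisting of a UMTC and a braided embedding of $\EE$, which is the desired conclusion.

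There is no real obstacle at the corollary level: all the work has been absorbed into the proof of Lemma \ref{lem:unit-of-group}. If one wanted a self-contained argument for the special case $\EC=\EE$, the essential step would still be the identification $(\EM \boxtimes Z(\EE))_{L_\EE}^0 \simeq \EM$ as braided fusion categories compatible with the embedding of $\EE$. The cleanest route is to compute the Drinfeld center of $(\EM \boxtimes Z(\EE))_{L_\EE}$ using the formula in (\ref{eq:ZC_A}) and to realize this latter fusion category, via $\alpha$-induction, as the relative center $Z_\EE(\EM) \simeq \Fun_{\EM \boxtimes \EE}(\EM,\EM)$; the universal property of the Drinfeld center then provides the required braided equivalence. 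The compatibility with $\iota_\EM$ on one side and with $(-\boxtimes \one_{Z(\EE)})\otimes L_\EE$ on the other is checked by tracing an object $c \in \EE$ through the construction, exactly as in the proof of the lemma.
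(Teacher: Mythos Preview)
Your proposal is correct and matches the paper's approach exactly: the paper states this corollary immediately after Lemma~\ref{lem:unit-of-group} as a special case (with $\EC=\EE$) and gives no separate proof. Your additional paragraph sketching a self-contained argument simply recapitulates the proof of the lemma itself and is not needed.
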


In summary, we have proved the first main result of this work.

\begin{thm} \label{thm:group}
The set $\EM_{ext}(\EE)$ of equivalence classes of the modular extensions of $\EE$, together with the binary multiplication $\boxtimes_\EE^{(-,-)}$ and the identity element $(Z(\EE),\iota_0)$, defines a finite abelian group. 
\end{thm}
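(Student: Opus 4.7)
The plan is to assemble the theorem as a direct consequence of the structural results established just above. The set $\EM_{ext}(\EE)$ is nonempty by Example \ref{expl:E-ZE}, which provides the distinguished element $(Z(\EE),\iota_0)$. By Lemma \ref{lem:MEN} applied to $\EC=\ED=\EE$, the operation $\boxtimes_\EE^{(-,-)}$ restricts to a well-defined binary operation on $\EM_{ext}(\EE)$, since $\EE\boxtimes_\EE \EE\simeq \EE$ as \bfce's. Commutativity follows from Proposition \ref{prop:comm}, and associativity follows from Proposition \ref{prop:associativity} (specialized to the case when all three input \mtce's are $\EE$, so the associator in Eq.\,(\ref{eq:alpha}) gives a genuine associativity isomorphism among the three-fold products of modular extensions of $\EE$).

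Next I would invoke Corollary \ref{rema:unit-of-group} to identify the unit: for any modular extension $(\EM,\iota_\EM)$ of $\EE$ we have $\EM\boxtimes_\EE^{(\iota_\EM,\iota_0)} Z(\EE)\simeq (\EM,\iota_\EM)$, and combining this with commutativity (Proposition \ref{prop:comm}) shows $(Z(\EE),\iota_0)$ is a two-sided identity. For the existence of inverses, Corollary \ref{cor:MM=1} shows that $(\overline{\EM},\overline{\iota_\EM})$ is also a modular extension of $\EE$ and satisfies
\[
\EM \boxtimes_\EE^{(\iota_\EM,\overline{\iota_\EM})} \overline{\EM} \simeq (Z(\EE),\iota_0),
\]
so $[\overline{\EM}]$ is a two-sided inverse for $[\EM]$ (again using commutativity).

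Finally, for finiteness, every modular extension of $\EE$ has the same Frobenius-Perron dimension $\fpdim(\EE)^2$, so the finiteness is immediate from the general result of \cite{bnrw} that there are only finitely many equivalence classes of fusion categories of a given Frobenius-Perron dimension; this was already noted in the discussion following Definition \ref{def:eq-mext}.

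Since every ingredient is a corollary of lemmas already proved, I do not expect any real obstacle: the proof is essentially a bookkeeping assembly. The only minor point requiring care is that the associator of Proposition \ref{prop:associativity} and the unit isomorphism of Lemma \ref{lem:unit-of-group} genuinely descend to equalities on equivalence classes in $\EM_{ext}(\EE)$, which is automatic because those statements are already formulated up to braided equivalences respecting the embeddings.
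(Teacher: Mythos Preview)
Your proposal is correct and matches the paper's approach exactly: the paper presents Theorem~\ref{thm:group} as a summary (``In summary, we have proved the first main result of this work''), having already established commutativity (Prop.~\ref{prop:comm}), associativity (Prop.~\ref{prop:associativity}), inverses (Cor.~\ref{cor:MM=1}), the identity (Cor.~\ref{rema:unit-of-group}), and finiteness (via \cite{bnrw}, noted after Def.~\ref{def:eq-mext}) in precisely the order you describe.
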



\subsection{Modular extensions of $\rep(G)$ and group cohomologies} 
\label{sec:mext-repG}

Let $(\EM,\iota_\EM)$ be a modular extension of $\rep(G)$. The algebra $A=\text{Fun}(G)$ is a condensable algebra in $\rep(G)$ and also a condensable algebra in $\EM$. Moreover, $A$ is a Lagrangian algebra in $\EM$ because $(\dim A)^2 = (\dim \rep(G))^2 = \dim \EM$. Therefore, $\EM\simeq Z(\EM_A)$, where $\EM_A$ is the category of right $A$-modules in $\EM$. Moreover, the fusion category $\EM_A$ is pointed and equipped with a canonical faithful $G$-grading \cite{dgno1, dgno2,gnn}, which means that 
$$
\EM_A=\oplus_{g\in G} (\EM_A)_g, \quad (\EM_A)_g\simeq \vect, \,\, \forall g\in G, \quad \mbox{and} \quad \otimes: (\EM_A)_g \boxtimes (\EM_A)_h \xrightarrow{\simeq} (\EM_A)_{gh}.
$$ 

Let us recall the construction of this $G$-grading. Note that the functor
$F=-\otimes A: \EM \to \EM_A$ is a central functor. Namely, there is a
half-braiding $z_{m,x}: F(m) \otimes_A x \xrightarrow{} x \otimes_A F(m)$ for
$m\in \EM$. Let $x$ be a simple object in $\EM_A$. For $e\in \rep(G)$, $F(e)$
is a multiple of the tensor unit in $\EM_A$. Using the half-braiding, we obtain an isomorphism 
\be \label{eq:i-x}
F(e) \otimes_A x \xrightarrow{z_{e,x}} x \otimes_A F(e) = F(e) \otimes_A x, 
\ee
which is natural and monoidal with respect to the variable $e\in \rep(G)$.
Since $x$ is simple, we have $\aut(F(e)\otimes_A x) \simeq \aut(F(e))$. Thus, we obtain an automorphism of $F(e)$ that is natural and monoidal with respect to the variable $e\in \rep(G)$. In other words, 
we obtain a monoidal automorphism $\phi(x)$ of the fiber functor $F\circ \iota_\EM: \rep(G) \to \vect$. Therefore, we obtain a map $\phi: O(\EM_A) \to G$ defined by $x \mapsto \phi(x) \in \aut(F\circ \iota_\EM) = G$. Moreover, $\phi$ respects the multiplications and units. Furthermore, the non-degeneracy of $\EM$ implies that $\phi$ is a group isomorphism \cite{dgno2}. This defines a faithful $G$-grading on $\EM_A$. 

\begin{rema} {\rm
The physical meaning of acquiring a $G$-grading on $\EM_A$ after condensing the algebra $A=\fun(G)$ in $\EM$ is explained in \cite[Fig.\,1]{lkw2}. In fact, this is just a special case of a more general result, which says that the 2-category of non-degenerate braided fusion category containing $\rep(G)$ as a fusion subcategory is equivalent to the 2-category of G-crossed braided fusion categories via the functor $\EM \to \EM_A$ \cite{kirillov, mueger3,dgno2,gnn}. 
}\end{rema}

Since $\phi$ is an isomorphism, the associator of the monoidal category $\EM_A$ determines a unique $\omega_{(\EM,\iota_\EM)}\in H^3(G, U(1))$ such that $\EM_A \simeq \vect_G^{\omega_{(\EM,\iota_\EM)}}$ as $G$-graded unitary fusion categories and $\EM\simeq Z(\vect_G^{\omega_{(\EM,\iota_\EM)}})$ as UBFC's.

\void{
\begin{figure}[tb]
$$
\begin{picture}(100, 185)
   \put(-60,0){\scalebox{1.45}{\includegraphics{pic-half-braiding.eps}}}
   \put(0,0){
     \setlength{\unitlength}{.75pt}\put(-18,-40){
     \put(20, 257)     { $e\in \rep(G) \subset \EM$}
     \put(-20, 170)    { $-\otimes A$}
     \put(75, 81)     { $x$}
     \put(75,57)        { $\gamma_2$}
     \put(75,132)        { $\gamma_1$}
     \put(100, 210)    {$\EM$}
     \put(225, 81)    {$\EM_A$}
     \put(160,45)     {$\vect$}
     \put(0, 65)        {$F(e)$}
     }\setlength{\unitlength}{1pt}}
  \end{picture}
$$
\caption{Consider a physical situation in which the excitations in the $2+1$D bulk are given by a modular extension $\EM$ of $\rep(G)$, and those on the gapped boundary by the UFC $\EM_A$. Consider a simple particle $e\in \rep(G)$ in the bulk moving toward the boundary. The bulk-to-boundary map is given by the central functor $-\otimes A: \EM \to \EM_A$, which restricted to $\rep(G)$ is nothing but the forgetful functor $F:\rep(G) \to \vect$. Let $x$ be a simple excitation in $\EM_A$ sitting next to $F(e)$. We move $F(e)$ along the semicircle $\gamma_1$ (defined by the half-braiding), then move along the semicircle $\gamma_2$ (defined by the symmetric braiding in the trivial phase $\vect$). This process defines the isomorphism in Eq.\,(\ref{eq:i-x}), which further gives a monoidal automorphism $\phi(x)\in \aut(F)=G$ of the fiber functor $F: \rep(G) \to \vect$. }
\label{fig:G-grading}
\end{figure}
}

\smallskip
Conversely, for any $\omega\in H^3(G, U(1))$, there is a canonical braided embedding $\iota_\omega: \rep(G) \hookrightarrow Z(\vect_G^\omega)$ such that the composition $\rep(G) \hookrightarrow Z(\vect_G^\omega) \to \vect_G^\omega$ defines a symmetric fiber functor $\rep(G) \to \vect\subset \vect_G^\omega$ and the induced group isomorphism $\phi: G=O(\vect_G^\omega) \to G$ is the identity map, i.e. $\phi=\id_G$ \cite{kirillov, mueger3,dgno2,gnn}.

\begin{thm} \label{thm:spt}
The map $(\EM, \iota_\EM) \mapsto \omega_{(\EM, \iota_\EM)}$ defines a group isomorphism $\EM_{ext}(\rep(G)) \simeq H^3(G, U(1))$. In particular, we have 
\be \label{eq:w+w}
Z(\vect_G^{\omega_1}) \boxtimes_{\rep(G)}^{(\iota_{\omega_1}, \iota_{\omega_2})} Z(\vect_G^{\omega_2}) \simeq (Z(\vect_G^{\omega_1+\omega_2}), \iota_{\omega_1+\omega_2}).
\ee 
\end{thm}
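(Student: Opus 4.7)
The plan is to verify (a) that $\Phi:(\EM,\iota_\EM)\mapsto \omega_{(\EM,\iota_\EM)}$ is a well-defined map of sets $\EM_{ext}(\rep(G))\to H^3(G,U(1))$, (b) that $\omega\mapsto(Z(\vect_G^\omega),\iota_\omega)$ is a two-sided inverse, and (c) the group-homomorphism property by establishing (\ref{eq:w+w}).

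\emph{Well-definedness and bijectivity.} An equivalence $f:(\EM,\iota_\EM)\xrightarrow{\simeq}(\EN,\iota_\EN)$ sends the Lagrangian algebra $\iota_\EM(\fun(G))$ to $\iota_\EN(\fun(G))$, inducing a monoidal equivalence $\EM_{\fun(G)}\xrightarrow{\simeq}\EN_{\fun(G)}$ that intertwines the half-braiding $G$-gradings, so the same $\omega$ is produced and $\Phi$ descends. For bijectivity, the preamble of the theorem already yields $\EM\simeq Z(\EM_{\fun(G)})\simeq Z(\vect_G^{\omega_{(\EM,\iota_\EM)}})$ since $\fun(G)$ is Lagrangian in $\EM$, while Proposition~\ref{expl:G-H} and the construction of $\iota_\omega$ (for which the induced $\phi$ is $\id_G$) give $\Phi(Z(\vect_G^\omega),\iota_\omega)=\omega$. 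So the two maps are mutually inverse.

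\emph{Proof of (\ref{eq:w+w}).} Set $\EE=\rep(G)$, $A=\fun(G)$, $\EN:=Z(\vect_G^{\omega_1})\boxtimes Z(\vect_G^{\omega_2})$ and $\EM:=\EN_{L_\EE}^0=Z(\vect_G^{\omega_1})\boxtimes_\EE^{(\iota_{\omega_1},\iota_{\omega_2})}Z(\vect_G^{\omega_2})$. The Lagrangian algebra $A\boxtimes A=\fun(G\times G)\in\EE\boxtimes\EE=\rep(G\times G)\subset\EN$ yields
$$
\EN_{A\boxtimes A}\simeq Z(\vect_G^{\omega_1})_A\boxtimes Z(\vect_G^{\omega_2})_A\simeq\vect_G^{\omega_1}\boxtimes\vect_G^{\omega_2}\simeq\vect_{G\times G}^{\omega_1\times\omega_2}
$$
as $(G\times G)$-graded UFC's. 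By Proposition~\ref{expl:G-H} applied in $\rep(G\times G)$, the algebra $L_\EE\simeq\fun((G\times G)/G_\Delta)$ is a condensable subalgebra of $A\boxtimes A$, and a direct coset computation shows $\iota_\EM(A)=(A\boxtimes\one)\otimes L_\EE\simeq A\boxtimes A$ as algebras in $\EN$ (the $G\times G$-action on the coset product $X\times Y$ with $X=(G\times G)/(\{e\}\times G)$ and $Y=(G\times G)/G_\Delta$ is free and transitive). Using the iterated condensation identity $(\EC_B)_A\simeq\EC_A$ of Section~\ref{sec:alg-bfc}, the $A$-condensation $\EM_A=(\EN_{L_\EE}^0)_{\iota_\EM(A)}$ is realised as the fusion subcategory of $\EN_{A\boxtimes A}\simeq\vect_{G\times G}^{\omega_1\times\omega_2}$ whose objects are supported on the diagonal $G_\Delta$. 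That subcategory is $\vect_{G_\Delta}^{(\omega_1\times\omega_2)|_{G_\Delta}}\simeq\vect_G^{\omega_1+\omega_2}$, since the restriction of the external product cocycle to the diagonal is the pointwise product in $H^3(G,U(1))$.

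\emph{Matching gradings and main obstacle.} To conclude $\Phi(\EM)=\omega_1+\omega_2$, I must verify that the $G$-grading on $\EM_A$ obtained above (restriction of the ambient $(G\times G)$-grading to $G_\Delta$) coincides with the intrinsic half-braiding $G$-grading defined in Section~\ref{sec:mext-repG} via $\iota_\EM$. The key observation is that the composition $\rep(G)\xrightarrow{\iota_\EM}\EM\xrightarrow{-\otimes A}\EM_A$ is forced to equal the fiber functor $\fr:\rep(G)\to\vect$ landing in the trivially graded component of $\EM_A$, because $\iota_\EM$ factors through $\EE'|_\EM=\EE$ and $A$ is the Lagrangian algebra in $\EE$; this pins down the intrinsic isomorphism $\phi:O(\EM_A)\to G$ as the canonical $G_\Delta\xrightarrow{\simeq}G$, identifying $\EM_A\simeq\vect_G^{\omega_1+\omega_2}$ as $G$-graded UFC's. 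The main technical hurdle is exactly this compatibility check: one must track the product half-braidings of $Z(\vect_G^{\omega_1})\boxtimes Z(\vect_G^{\omega_2})$ through the two-step condensation (first by $L_\EE$, then by $A$) and confirm that they combine to give precisely the diagonal-grading half-braiding structure predicted by the intrinsic definition.
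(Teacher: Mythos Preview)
Your proposal is correct and follows essentially the same route as the paper: both arguments hinge on recognising $Z(\vect_G^{\omega_1})\boxtimes Z(\vect_G^{\omega_2})\simeq Z(\vect_{G\times G}^{\omega_1\times\omega_2})$ and identifying $L_{\rep(G)}$ with $\fun((G\times G)/G_\Delta)$, then restricting the product cocycle $\omega_1\times\omega_2$ to the diagonal. The paper, however, is considerably shorter because it invokes Prop.~\ref{expl:G-H} (and the lattice anti-isomorphism of Thm.~\ref{thm:dmno} behind it) as a black box: that proposition already says $\varphi(\fun((G\times G)/G_\Delta))=\vect_{G_\Delta}^{(\omega_1\times\omega_2)|_{G_\Delta}}$, and part~1 of Thm.~\ref{thm:dmno} gives $Z(\varphi(L_\EE))\simeq\EN_{L_\EE}^0$ with the forgetful functor equal to $F_{L_\EE}$, so the identification of $\EM_A$ with $\vect_G^{\omega_1+\omega_2}$ \emph{together with} the correct embedding and $G$-grading comes for free. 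What you flag as ``the main technical hurdle'' (tracking half-braidings through the two-step condensation to match the intrinsic grading) is thus absorbed into the canonicity of the functor $F_B$ in Thm.~\ref{thm:dmno}; you are re-deriving by hand what that theorem packages. Your explicit computation of $(A\boxtimes\one)\otimes L_\EE\simeq A\boxtimes A$ and the iterated condensation are correct, but you could replace the entire second half of your argument with a single citation of Prop.~\ref{expl:G-H}.
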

\pf
It is well-known that $Z(\rep(G))=Z(\vect_G)$. When $(\EM, \iota_\EM)=(Z(\vect_G), \iota_0)$, the forgetful functor $Z(\vect_G) \to \vect_G$ determines a unique Lagrangian algebra $A$ which is exactly the algebra $\fun(G)$ (recall Prop.\,\ref{expl:G-H}). Therefore, we have $\omega_{(Z(\rep(G)), \iota_0)} = 1$. 
It remains to prove the identity (\ref{eq:w+w}). Notice that $Z(\vect_G^{\omega_1}) \boxtimes Z(\vect_G^{\omega_2}) \simeq Z(\vect_{G\times G}^{\omega_1 \times \omega_2})$ as braided fusion categories, and we have $L_{\rep(G)}=\fun(G\times G/G)$ in $Z(\vect_{G\times G}^{\omega_1 \times \omega_2})$ by Remark\,\ref{rema:E-alg}. Then the identity (\ref{eq:w+w}) follows as a special case of Prop.\,\ref{expl:G-H} for the finite group 
$G\times G$ and its diagonal subgroup $G$. 
\epf


\begin{rema} {\rm
Thm.\,\ref{thm:spt} matches precisely with the well-known group cohomology classification of bosonic SPT orders \cite{cglw}. Note that breaking the $G\times G$-symmetry on $Z(\vect_G^{\omega_1}) \boxtimes Z(\vect_G^{\omega_2})$ to the $G$-symmetry on $Z(\vect_G^{\omega_1}) \boxtimes_{\rep(G)}^{(\iota_{\omega_1}, \iota_{\omega_2})} Z(\vect_G^{\omega_2})$ exactly corresponds to condensing the algebra $L_{\rep(G)}=\fun(G\times G/G)$. 
}
\end{rema}

\begin{rema} \label{rema:dmitri} {\rm
Note that it is possible that $Z(\vect_G^{\omega_1})$ is braided equivalent to
$Z(\vect_G^{\omega_2})$ for $\omega_1\neq \omega_2$. For example, when
$G=\Zb_p$ for a prime number $p$, $H^3(\Zb_p,U(1))=\Zb_p$. But, the number of
monoidally non-equivalent fusion categories $\vect_{\Zb_p}^\omega$ is two for
$p=2$ and always three for $p\geq 3$ \cite{nik}, which is less than the number of
different 3-cocycles when $p\geq 5$. So the embedding $\rep(G) \hookrightarrow Z(\vect_G^\omega)$
is very important physical data that allows us to distinguish elements in the group $\EM_{ext}(\rep(G))$ as different bosonic SPT orders. 
}
\end{rema}

\subsection{Modular extensions of $\svect$ and Kitaev's 16-fold way} \label{sec:mext-svect}

In this subsection, we discuss a well-known classification of the modular extensions of the SFC $\svect$ (\cite{kitaev, dgno2}). 

\medskip
The symmetric fusion category $\svect$ contains two non-isomorphic simple objects: the tensor unit $\one$ and $u$ with $u\otimes u=\one$. The braiding $c_{u,u}\in \aut(u\otimes u) = \Cb^\times$ is $-1 \in \Cb^\times$. It can be viewed as the category $\rep(\Zb_2, z)$ of the representations of the group $(\Zb_2, z)$, where $z\in \Zb_2$ is the fermionic parity transformation, with the braiding $c_{u,u}$ defined above. 

\medskip
If $\EM$ is a modular extension of $\svect$, it is necessary that $\fpdim(\EM)=4$. We start with modular extensions of $\svect$ that are not pointed. Such modular extensions are called unitary Ising modular categories, each of which is a UMTC containing 3 equivalence classes of simple objects: the tensor unit $\one$, an invertible object $u$ and a non-invertible object $x$ with the following fusion rules: 
$$
u \otimes u \simeq \one, \quad u \otimes x \simeq x \simeq x \otimes u, 
\quad x\otimes x \simeq \one \oplus u, 
$$
and $\fpdim(u)=1$, $\fpdim(x)=\sqrt{2}$. The complete classification of such categories was obtained in \cite{vec} (see also \cite{fgv}), and was rediscovered more recently in \cite{kitaev,dgno2}. We will describe this classification following the labeling convention in \cite{dgno2}.

There are precisely two inequivalent monoidal structures (see for example \cite[Prop.\,B.5]{dgno2}). Each one has 4 different braided structures \cite[Cor.\,B.13]{dgno2}, which are automatically non-degenerate \cite[Cor.\,B.12]{dgno2}. Each of these 8 braided monoidal structures is determined uniquely by the braiding isomorphism $x\otimes x \to x\otimes x$ defined by $\zeta\, \id_\one \oplus \zeta^{-3}\, \id_u$ for $\zeta^8=-1$ \cite[Prop.\,B.14]{dgno2}. Each of the 8 has two spherical structures \cite[Sec.\,B.2]{dgno2} labeled by $\epsilon=\pm 1$. Therefore, there are 16 Ising modular categories. Due to the relation $\dim(x)=\epsilon(\zeta^2+\zeta^{-2})$, where $\dim(x)$ is the quantum dimension of $x$,
for each $\zeta$, only one of $\epsilon=\pm 1$ makes the Ising modular categories unitary. 
We denote the 8 UMTC's by $\EI_\zeta$. The S-matrix of $\EI_\zeta$ is given by (see \cite[Cor.\,B.21]{dgno2})
$$
S = \left( \begin{array}{ccc}      
1 & 1 & \sqrt{2} \\
1 & 1 & -\sqrt{2} \\
\sqrt{2} & -\sqrt{2} & 0
\end{array} \right)\, ,
$$
and the twists in $\EI_\zeta$ are given by (see \cite[Prop.\,B.20]{dgno2})
$$
\theta_{\one} = 1, \quad \theta_u = -1, \quad \theta_{X}=\epsilon\zeta^{-1},
$$
where $\epsilon=1$ if $\zeta^2+\zeta^{-2}=\sqrt{2}$ and $\epsilon=-1$ if $\zeta^2+\zeta^{-2}=-\sqrt{2}$. We would like to remark that $\theta_X$ for these 8 UMTC's are all distinct. 

Each UMTC $\EI_\zeta$ contains a symmetric fusion subcategory that is generated by $\one$ and $u$ and is equivalent to $\svect$. Therefore, each $\EI_\zeta$ is a modular extension of $\svect$.

\medskip
If a modular extension $\EM$ of $\svect$ is pointed, then the group $G=O(\EM)$ must be abelian and of order $4$, and $\EM$ is equipped with a fully-faithful $G$-grading, i.e. $\EM=\oplus_{g\in G} \EM_g$ and $\EM_g\simeq \vect$. Let $x$ be the simple object in $\EM_g$, we define $q(g) := c_{x,x} \in \aut(x\otimes x) = \Cb^\times$. Then $q$ defines a non-degenerate quadratic form $q: G\to \Cb^\times$. Such a pair $(G,q)$ is called a metric group. The modular extension $\EM$ of $\svect$ is uniquely (up to isomorphisms) determined by the data $(G,q,u)$, where $(G,q)$ is a metric group of order 4 and $u$ is the order 2 element in $O(\svect)\subset G$ such that $q(u)=-1$. There are again 8 such modular extensions of $\svect$ \cite{kitaev}\cite[Example\,A.10, Lemma\,A.11]{dgno2}. More explicitly, these 8 modular extensions can be labeled by the set of 8-th roots of unity $\{ \kappa \in \Cb | \kappa^8=1\}$. Let $n(\kappa)=0$ if $\kappa^4=1$ and $n(\kappa)=1$ if $\kappa^4=-1$. Then the metric group $(G_\kappa, q_\kappa)$ associated to $\kappa$ is given by 
$$
G_\kappa :=\{ 0, v, u, v+u \, |\, 2u=0, 2v=n(\kappa)u \}, 
$$
and the quadratic form $q_\kappa$ is given by:
$$
q_\kappa(u)=-1, \quad q_\kappa(v)=q_\kappa(u+v)=\kappa, \quad q_\kappa(0)=1. 
$$
The twists in the associated modular tensor category $\EC(G_\kappa, q_\kappa)$ are $\theta_g=q_\kappa(g)$ for $g\in G_\kappa$, and the S-matrix of $\EC(G_\kappa, q_\kappa)$ is given by $S_{gh}=b(g,h)$, where $b(g,h)=\frac{q_\kappa(g+h)}{q_\kappa(g)q_\kappa(h)}$ for $g,h\in G_\kappa$. By Example\,\ref{expl:unitary}, these 8 modular tensor categories $\EC(G_\kappa,q)$ are all unitary. 

\medskip
In summary, we have the following result. 
\begin{thm}[\cite{kitaev,dgno2}] \label{thm:dgno2-ising-Aq}
There are 16 different modular extensions of $\svect$. They are given by 8 unitary Ising braided modular tensor categories $\EI_\zeta$ for $\zeta^8=-1$ and 8 unitary modular tensor categories $\EC(G_\kappa,q)$ associated to the metric group $(G_\kappa,q)$ for $\kappa^8=1$. 
\end{thm}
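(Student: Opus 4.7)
The plan is to split the analysis by whether the modular extension $\EM$ of $\svect$ is pointed or not, and exhaust both cases using the dimensional constraint that $\fpdim(\EM)=\fpdim(\EC)\fpdim(\EE)=1\cdot 2\cdot 2=4$. Since $\sum_{i\in O(\EM)}\fpdim(i)^2=4$ and the two simple objects $\one,u$ of $\svect\subset\EM$ contribute $2$ to this sum, there are only two possible shapes for $O(\EM)$: either $\EM$ is pointed with four invertible simples of Frobenius--Perron dimension $1$, or $\EM$ has a unique extra simple object $x$ of Frobenius--Perron dimension $\sqrt 2$, forcing the Ising-type fusion rules $u\otimes x\simeq x\simeq x\otimes u$ and $x\otimes x\simeq \one\oplus u$ (this last rule is forced because $\fpdim(x\otimes x)=2$ and $\hom(x\otimes x,\one)\neq 0$ by rigidity, while $x$ cannot appear since its dimension is irrational).

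For the pointed case, I would appeal to the standard equivalence between pointed non-degenerate braided fusion categories and metric groups $(G,q)$: $\EM\simeq \EC(G,q)$ where $G=O(\EM)$ and $q(g)=c_{g,g}$ is the associated quadratic form (this is built from the $G$-grading and is non-degenerate precisely because $\EM$ is non-degenerate, cf.\ the discussion around Thm.\,\ref{thm:spt}). The $\svect$-extension condition forces $|G|=4$ and singles out an element $u\in G$ of order $2$ with $q(u)=-1$. A direct enumeration of abelian groups of order $4$ (namely $\Zb_4$ and $\Zb_2\times \Zb_2$) together with the non-degenerate quadratic forms $q$ satisfying $q(u)=-1$ yields exactly $8$ isomorphism classes, parametrized by $8$-th roots of unity $\kappa$ via the explicit formulas $q_\kappa(u)=-1$, $q_\kappa(v)=q_\kappa(u+v)=\kappa$, with $2v=n(\kappa)u$. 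Unitarity of each $\EC(G_\kappa,q_\kappa)$ is automatic since pointed fusion categories are unitary (Example\,\ref{expl:unitary}).

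For the non-pointed (Ising) case, I would invoke the classification already recalled in the excerpt: such a fusion category has two inequivalent associators, each admits four braided structures, and each braided structure has two spherical structures, yielding $16$ Ising braided fusion categories, of which exactly $8$ are unitary (one spherical choice per $\zeta$ makes $\dim(x)=\epsilon(\zeta^2+\zeta^{-2})>0$). Each $\EI_\zeta$ is automatically a modular extension of $\svect$ because $\{\one,u\}$ generates a copy of $\svect$ inside $\EI_\zeta$ (the braiding $c_{u,u}=\theta_u=-1$ identifies it with $\svect$ rather than $\rep(\Zb_2)$), and $\svect'|_{\EI_\zeta}=\svect$ follows either from non-degeneracy plus $\fpdim(\svect'|_{\EI_\zeta})=\fpdim(\EI_\zeta)/\fpdim(\svect)=2$ via equation (\ref{eq:fpdim-centralizer}), or from the fact that $x\notin \svect'|_{\EI_\zeta}$ since $c_{u,x}\circ c_{x,u}=-\id$ (computed directly from the S-matrix).

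Finally, I would check that the pointed and Ising families are disjoint (trivially true, since the pointed ones have four simples and the Ising ones have three), and that within each family distinct parameters give inequivalent UMTCs. For the Ising family, the excerpt already notes that the $8$ twists $\theta_x=\epsilon\zeta^{-1}$ are all distinct, so the $\EI_\zeta$'s are pairwise inequivalent as braided fusion categories. For the pointed family, the braided equivalence class is captured by the metric group $(G_\kappa,q_\kappa)$, and the $8$ pairs enumerated above are pairwise non-isomorphic as metric groups (distinguished by the value $\kappa=q_\kappa(v)$ together with the underlying group). Summing the two families produces the stated $8+8=16$ modular extensions. The main subtle point, and the only place where some care is required, is the fusion-rule analysis in the non-pointed case and the verification that each candidate Ising braided category genuinely embeds $\svect$ as its M\"uger centralizer with the correct (fermionic) braiding; everything else is bookkeeping against the DGNO2/Kitaev classification that we are allowed to quote.
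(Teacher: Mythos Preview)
Your proposal is correct and follows essentially the same approach as the paper, which presents this theorem as a cited result from \cite{kitaev,dgno2} with an expository discussion rather than a self-contained proof; your case split via the constraint $\fpdim(\EM)=4$, the appeal to metric groups for the pointed case, and the invocation of the DGNO/Kitaev Ising classification for the non-pointed case all mirror the paper's exposition. One small point you gloss over: you argue the 16 underlying UMTCs are pairwise inequivalent, but a modular extension is a pair $(\EM,\iota_\EM)$, so in principle one UMTC could support several inequivalent embeddings of $\svect$ (e.g.\ in the three-fermion case $\kappa=-1$ there are three order-$2$ elements with $q=-1$); to get exactly $16$ rather than at least $16$ you should note that any two such embeddings are related by a braided autoequivalence of $\EM$, which here follows from transitivity of $\mathrm{Aut}(G_\kappa,q_\kappa)$ on the fermions.
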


These 16 different modular extensions of $\svect$ are all different as non-degenerate braided fusion categories. Namely, the set of the modular extensions of $\svect$ coincides with that of the non-degenerate extensions of $\svect$. Moreover, these 16 non-degenerate extensions belong to 16 different Witt classes \cite{dgno2,dmno}. Note that the UMTC's $\EM\boxtimes \EN$ and $\EM \boxtimes_{\svect}^{(\iota_\EM, \iota_\EN)} \EN$ are Witt equivalent. Let $\EW$ be the Witt group. By taking Witt equivalence classes, we obtain an injective group homomorphism $[-]: \EM_{ext}(\svect) \hookrightarrow \EW$. It is well-known that the image is the subgroup $\Zb_{16}$ of $\EW$ \cite{dgno2,dmno,dno}. We obtain the following result. 

\begin{thm}
Taking Witt equivalence classes $[-]: \EM_{ext}(\svect) \simeq \Zb_{16}$ defines a group isomorphism. 
\end{thm}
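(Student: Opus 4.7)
The plan is to assemble the three ingredients collected in the paragraph immediately preceding the theorem into a single group isomorphism argument. First I would verify that $[-]\colon \EM_{ext}(\svect) \to \EW$ is a well-defined homomorphism of abelian groups. The group law on $\EM_{ext}(\svect)$ (established in Thm.\,\ref{thm:group}) sends $(\EM,\EN)$ to $(\EM\boxtimes\EN)_{L_\svect}^0$, while the Witt group law is $[\EM]\cdot[\EN]=[\EM\boxtimes\EN]$. Since passing to local modules over an \'etale algebra in a non-degenerate braided fusion category yields a Witt equivalent UMTC, the displayed Witt equivalence $\EM\boxtimes\EN \rwsim \EM\boxtimes_\svect^{(\iota_\EM,\iota_\EN)}\EN$ gives $[\EM\boxtimes_\svect^{(\iota_\EM,\iota_\EN)}\EN]=[\EM]\cdot[\EN]$, so $[-]$ respects multiplication. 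The identity $(Z(\svect),\iota_0)$ maps to $[Z(\svect)]=1\in\EW$ since every Drinfeld center is Witt trivial, so $[-]$ is a group homomorphism.

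Next I would establish injectivity. By Thm.\,\ref{thm:dgno2-ising-Aq} there are exactly 16 modular extensions of $\svect$, namely the $\EI_\zeta$ with $\zeta^{8}=-1$ and the $\EC(G_\kappa,q)$ with $\kappa^{8}=1$. The observation (with reference to \cite{dgno2,dmno}) that these 16 UMTCs lie in pairwise distinct Witt classes is precisely the statement that $[-]$ has trivial kernel; combined with the preceding paragraph this yields an injective homomorphism.

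Finally, I would identify the image with the cyclic subgroup $\Zb_{16}\subset\EW$. Since $[-]$ is injective and $|\EM_{ext}(\svect)|=16$, the image is an order-16 subgroup of $\EW$. It contains the class $[\EI_\zeta]$ of any unitary Ising modular category, and by the results of \cite{dgno2,dmno,dno} the cyclic subgroup $\langle[\EI_\zeta]\rangle$ of $\EW$ generated by an Ising class is precisely $\Zb_{16}$. Since a subgroup of $\EW$ of order 16 that contains a cyclic group of order 16 must equal it, the image of $[-]$ coincides with $\Zb_{16}\subset\EW$, completing the isomorphism.

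The only nontrivial conceptual step is the identification $\langle[\EI_\zeta]\rangle\simeq\Zb_{16}$ inside $\EW$, which is the harder arithmetic input; this is handled by the cited results and does not need to be re-proved here. The rest of the argument is a direct packaging of Thm.\,\ref{thm:group}, Thm.\,\ref{thm:dgno2-ising-Aq}, and the Witt equivalence between $\boxtimes$ and $\boxtimes_\svect^{(-,-)}$.
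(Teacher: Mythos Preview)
Your proposal is correct and follows essentially the same approach as the paper: the paper's argument is the paragraph immediately preceding the theorem, which uses exactly your three ingredients (the Witt equivalence $\EM\boxtimes\EN \sim \EM\boxtimes_\svect^{(\iota_\EM,\iota_\EN)}\EN$ to get a homomorphism, the fact that the 16 extensions lie in distinct Witt classes for injectivity, and the cited identification of the image with $\Zb_{16}$). Your write-up spells out the image identification a bit more explicitly via $\langle[\EI_\zeta]\rangle\simeq\Zb_{16}$, but this is the same content.
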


Another convenient way to characterize the group $\EM_{ext}(\svect)$, especially for physicists, is to use the so-called multiplicative central charge. Recall that the {\it Gauss sums} of a pre-modular category $\EC$ are defined by 
$$
\tau^\pm (\EC) = \sum_{x\in O(\EC)} \theta_x^{\pm 1} \dim(x)^2, 
$$
where $\theta_x\in \aut(x) = \Cb^\times$ is the twist isomorphism. The so-called {\it multiplicative central charge} $\xi(\EC)$ is defined by $\xi(\EC):=\tau^+(\EC)/\sqrt{\dim(\EC)}$. 
It is well known that $\xi(\EC)$ is a root of unity. For modular tensor categories $\EC$ and $\ED$, it is known that 
$$
\xi(\EC\boxtimes \ED) = \xi(\EC)\xi(\ED), \quad\quad \xi(\overline{\EC}) \simeq \xi(\EC)^{-1}, 
$$
and $\xi: \EW \to \mathbb{Q}/8\Zb$ is a group homomophism \cite{dmno}. The multiplicative central charge defines a group isomorphism $\xi: \EM_{ext}(\svect) \xrightarrow{\simeq} \Zb_{16}$. The additive central charge $c=c(\EC)\in \mathbb{Q}/8\mathbb{Z}$ is related to $\xi(\EC)$ by $\xi(\EC)=e^{2\pi i c/8}$. Among all 16 modular extensions of $\svect$, the famous UMTC of the modules over the Ising Virasoro vertex operator algebra with additive central charge $c=1/2$ is mapped to $e^{2\pi i/16}$. It describes a $p+\mathrm{i} p$ superconductor state.

\begin{rema} {\rm
The relation between the modular extensions of $\svect$ and the classification of 2+1D topological superconductor is well known from the very beginning as Kitaev's 16 fold way \cite{kitaev}. The Witt classes of these 16 modular extensions form the $\Zb_{16}$ group is well-known \cite{dmno,dno}. Note also that 15 of the 16 are anisotropic in the sense that they can not be further condensed \cite{dmno}, thus can all be obtained by first stacking any one of them repeatedly then making the maximal condensations \cite{dmno} (see also a physical discussion of this fact in a recent paper \cite{nhksb}). But realizing the group $\Zb_{16}$ by the set $\EM_{ext}(\svect)$, together with the multiplication $\boxtimes_\EE^{(-,-)}$ and the identity element $(Z(\svect), \iota_0)$, is a new result. 
}
\end{rema}

\section{Modular extensions of \mtce's}

In this section, we study the relation between the sets of modular extensions of different \mtce's. We assume that all sets of modular extensions appeared in this section are not empty. 

\subsection{The set $\EM_{ext}(\EC)$ as a $\EM_{ext}(\EE)$-torsor} \label{sec:torsor}

In the simplest case, $\EC=\ED\boxtimes \EE$ and $\ED$ is a \umtc. Then $\EC$ is a \mtce.
In this case, the set $\EM_{ext}(\EC)$ of modular extension of $\EC$ is isomorphic to $\EM_{ext}(\EE)$. 

\smallskip
Let $\EC$ be a \mtce ~that has modular extensions. In general, there is no natural group structure on the set $\EM_{ext}(\EC)$. But there is a natural $\EM_{ext}(\EE)$-action on $\EM_{ext}(\EC)$: 
$$
\boxtimes_\EE^{(-,-)}: \EM_{ext}(\EC) \times \EM_{ext}(\EE) \to \EM_{ext}(\EC\boxtimes_\EE \EE) = \EM_{ext}(\EC)
$$  
by Prop.\,\ref{prop:associativity} and Remark.\,\ref{rema:unit-of-group}. 

\void{
\begin{prop}
Let $\EM$ be a modular extension of $\EC$ and $\EK$ that of $\EE$. 
We have the following results: 
\bnu
\item the pair $\left( (\EM\boxtimes \EK)_{L_\EE}^0, -\otimes L_\EE: \EC\to (\EM\boxtimes \EK)_{L_\EE}^0\right)$ gives a modular extension of $\EC$.
\item $\EM\boxtimes_\EE^{(\iota_\EM, \iota_0)} Z(\EE):=\left( (\EM \boxtimes Z(\EE))_{L_\EE}^0, -\otimes L_\EE\right) \simeq (\EM, \iota_\EM)$ as modular extensions of $\EC$. 
\item the action $\boxtimes_\EE^{(-, -)}: \EM_{ext}(\EC) \times \EM_{ext}(\EE) \to \EM_{ext}(\EC)$ is associative. 
\enu
\end{prop}
\pf
By Lemma\,\ref{lem:cclc}, the functor $-\otimes L_\EE: \EC\to (\EM\boxtimes \EK)_{L_\EE}^0$ is braided monoidal and fully faithful. It is clear that $\EC \subset \EE'|_{(\EM\boxtimes \EK)_{L_\EE}^0}$. By Eq.\,(\ref{eq:fpdim-centralizer}), we obtain that $\EC=\EE'|_{(\EM\boxtimes \EK)_{L_\EE}^0}$. This proves Part 1. Part 2 has been proved already (see Remark\,\ref{rema:unit-of-group}). The associativity of the actions follows from $\otimes \circ (\otimes \boxtimes \id_\EE)=\otimes \circ (\id_\EC \boxtimes \otimes): \EC \boxtimes \overline{\EC} \boxtimes \EE \to \EC$ (recall the proof of Prop.\,\ref{prop:associativity}). 
\epf
}

\begin{lem}
There is a map $\boxtimes_\EC^{(-,-)}: \EM_{ext}(\EC) \times \EM_{ext}(\overline{\EC}) \to \EM_{ext}(\EE)$ defined by 
$$
\left( (\EM, \iota_\EM), (\EN, \iota_\EN) \right) \mapsto \EM \boxtimes_\EC^{(\iota_\EM,\iota_\EN)} \EN :=
\left( (\EM \boxtimes \EN)_{L_\EC}^0, \,\, -\otimes L_\EC: \EE \to (\EM \boxtimes \overline{\EN})_{L_\EC}^0 \right), 
$$
and we have $\EM\boxtimes_\EC^{(\iota_\EM, \overline{\iota_\EM})} \overline{\EM} = (Z(\EE), \iota_0)$ (recall Lemma\,\ref{lem:cze}). 
\end{lem}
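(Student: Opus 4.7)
The plan is to verify two things: first, that the pair $\bigl((\EM\boxtimes\EN)_{L_\EC}^0, -\otimes L_\EC\bigr)$ is indeed a modular extension of $\EE$, so the map is well defined; second, that the ``inverse'' identity $\EM\boxtimes_\EC^{(\iota_\EM,\overline{\iota_\EM})}\overline{\EM}\simeq (Z(\EE),\iota_0)$ falls out by reinterpreting Lemma~\ref{lem:cze}. The construction mirrors the $\boxtimes_\EE^{(-,-)}$ of Lemma~\ref{lem:MEN} with $\EE$ replaced by $\EC$, but now the ``output'' category is a modular extension of $\EE$ rather than of $\EC\boxtimes_\EE\EC$, so the main task is to convert the input data $(\EM,\EN)$ into the correct structure on $(\EM\boxtimes\EN)_{L_\EC}^0$.

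For the first step I would fix condensable-algebra data: both $\iota_\EM:\EC\hookrightarrow\EM$ and $\iota_\EN:\overline{\EC}\hookrightarrow\EN$ combine to give a braided embedding $\EC\boxtimes\overline{\EC}\hookrightarrow\EM\boxtimes\EN$, through which $L_\EC=\bigoplus_{i\in O(\EC)} i\boxtimes i^{\ast}$ becomes a condensable algebra in the UMTC $\EM\boxtimes\EN$. Since $\EM\boxtimes\EN$ is non-degenerate, the results recalled in Sec.\,\ref{sec:alg-bfc} (the non-degeneracy of $\EC^0_A$ from \cite{bek,ko}) guarantee that $(\EM\boxtimes\EN)_{L_\EC}^0$ is a UMTC, with Frobenius--Perron dimension
\be \nonumber
\fpdim\bigl((\EM\boxtimes\EN)_{L_\EC}^0\bigr)=\frac{\fpdim(\EM)\fpdim(\EN)}{\fpdim(L_\EC)^2}=\fpdim(\EE)^2,
\ee
using $\fpdim(\EM)=\fpdim(\EN)=\fpdim(\EC)\fpdim(\EE)$ and $\fpdim(L_\EC)=\fpdim(\EC)$.

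Next I would exhibit the embedding $\EE\hookrightarrow (\EM\boxtimes\EN)_{L_\EC}^0$ via $e\mapsto (e\boxtimes\one_\EN)\otimes L_\EC$. To apply Prop.\,\ref{prop:embed}, I have to check the two conditions $L_\EC\cap(\EE\boxtimes\one_\EN)=\one$ and $\EE\boxtimes\one_\EN\subset \{L_\EC\}'|_{\EM\boxtimes\EN}$. The first is a direct inspection of the decomposition of $L_\EC$: only the summand $\one\boxtimes\one$ lies in $\EE\boxtimes\one_\EN$. The second follows because $\EE\subset\EC'$, so each $e\in\EE$ has trivial double braiding with every $i\in\EC\subset\EM$, hence with every simple summand of $L_\EC$. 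With these in hand, Prop.\,\ref{prop:embed} gives a braided full embedding of $\EE$ into $(\EM\boxtimes\EN)_{L_\EC}^0$, and the inclusion $\EE\subset \EE'|_{(\EM\boxtimes\EN)_{L_\EC}^0}$ is automatic. The formula (\ref{eq:fpdim-centralizer}), combined with non-degeneracy of $(\EM\boxtimes\EN)_{L_\EC}^0$, forces
\be \nonumber
\fpdim\bigl(\EE'|_{(\EM\boxtimes\EN)_{L_\EC}^0}\bigr)=\fpdim(\EE),
\ee
so the inclusion is an equality and the pair is a genuine modular extension of $\EE$. Functoriality in the equivalence classes of $(\EM,\iota_\EM)$ and $(\EN,\iota_\EN)$ is immediate from the construction.

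For the second claim, specializing $\EN=\overline{\EM}$ and $\iota_\EN=\overline{\iota_\EM}$, the pair becomes exactly $\bigl((\EM\boxtimes\overline{\EM})_{L_\EC}^0,(\EC\boxtimes\overline{\EC})_{L_\EC}^0\hookrightarrow\cdots\bigr)$, and Lemma~\ref{lem:cze} identifies this with $(Z(\EE),\iota_0)$. The only mild subtlety to justify is that the embedding $\EE\to (\EM\boxtimes\overline{\EM})_{L_\EC}^0$ produced here agrees, under this identification, with $\iota_0:\EE\hookrightarrow Z(\EE)$; but this was established in the proof of Lemma~\ref{lem:cze} by the argument that the composite $\EE\xrightarrow{-\otimes L_\EC}(\EC\boxtimes\overline{\EC})_{L_\EC}^0\hookrightarrow (\EM\boxtimes\overline{\EM})_{L_\EC}^0\simeq Z(\EE)$ coincides with $\iota_0$. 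The only step requiring care is the verification that non-degeneracy transports through the condensation, but this is standard and already invoked elsewhere in the paper, so I do not expect any real obstacle.
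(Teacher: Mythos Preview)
Your proposal is correct and follows essentially the same approach as the paper: establish the braided full embedding of $\EE$ into $(\EM\boxtimes\EN)_{L_\EC}^0$, then use the Frobenius--Perron dimension count via Eq.\,(\ref{eq:fpdim-centralizer}) to force $\EE=\EE'|_{(\EM\boxtimes\EN)_{L_\EC}^0}$, and invoke Lemma~\ref{lem:cze} for the second claim. The only cosmetic difference is that the paper cites Lemma~\ref{lem:cclc} directly for the embedding (since $\EE\subset\EC'|_\EM$), whereas you unpack that lemma and appeal to Prop.\,\ref{prop:embed}; these amount to the same argument.
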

\pf
Let $(\EM, \iota_\EM)$ and $(\EN, \iota_\EN)$ be two modular extensions of a \mtce ~$\EC$. By Lemma\,\ref{lem:cclc}, the functor $-\otimes L_\EC: \EE \to (\EM \boxtimes \overline{\EN})_{L_\EC}^0$ is braided monoidal and fully faithful. Clearly, $\EE$ is a fusion subcategory of $\EE'|_{(\EM \boxtimes \overline{\EN})_{L_\EC}^0}$. By Eq.\,(\ref{eq:fpdim-centralizer}), we obtain 
$$
\fpdim(\EE)\, \fpdim(\EE'|_{(\EM \boxtimes \overline{\EN})_{L_\EC}^0}) =
\frac{\fpdim(\EM)\, \fpdim(\EN)}{\fpdim(L_\EC)^2} = \fpdim(\EE)^2. 
$$
As a consequence, we must have $\EE = \EE'|_{(\EM \boxtimes \overline{\EN})_{L_\EC}^0}$, i.e. $\left( (\EM \boxtimes \overline{\EN})_{L_\EC}^0, -\otimes L_\EC \right) \in \EM_{ext}(\EE)$. 
\epf

\begin{rema}{\rm
Note that there is an obvious isomorphism $\EM_{ext}(\EC) \xrightarrow{\simeq}  \EM_{ext}(\overline{\EC})$ defined by $(\EM, \iota_\EM) \mapsto (\overline{\EM}, \overline{\iota_\EM})$. 
}
\end{rema}

\begin{lem}
We have the following commutative diagram: 
\be \label{diag:boxtimes-C-E}
\xymatrix{
\EM_{ext}(\EC) \times \EM_{ext}(\overline{\EC}) \times \EM_{ext}(\EE)  \ar[rrr]^{\id_{\EM_{ext}(\EC)} \, \times \, \boxtimes_\EE^{(-,-)}}  
\ar[d]_{\boxtimes_\EC^{(-,-)}\, \times \, \id_{\EM_{ext}(\EE)}} & & &
\EM_{ext}(\EC) \times \EM_{ext}(\overline{\EC}) \ar[d]^{\boxtimes_\EC^{(-,-)}} \\
\EM_{ext}(\EE) \times \EM_{ext}(\EE) \ar[rrr]^{\boxtimes_\EE^{(-,-)}} & & & \EM_{ext}(\EE)\, .
}
\ee
\void{
and 
\be \label{diag:boxtimes-C-E-C}
\xymatrix{
\EM_{ext}(\EC) \times \EM_{ext}(\overline{\EC}) \times \EM_{ext}(\EC)  \ar[rrr]^{\id_{\EM_{ext}(\EC)} \, \times \, \boxtimes_\EE^{(-,-)}}  
\ar[d]_{\boxtimes_\EC^{(-,-)}\, \times \, \id_{\EM_{ext}(\EC)}} & & &
\EM_{ext}(\EC) \times \EM_{ext}(\overline{\EC}\boxtimes_\EE \EC) \ar[d]^{\boxtimes_\EC^{(-,-)}} \\
\EM_{ext}(\EE) \times \EM_{ext}(\EC) \ar[rrr]^{\boxtimes_\EE^{(-,-)}} & & & \EM_{ext}(\EC)\, .
}
\ee
}
\end{lem}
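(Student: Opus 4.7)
The plan is to show that both compositions in diagram (\ref{diag:boxtimes-C-E}) coincide by realizing each as a single iterated condensation inside the triple Deligne product $\EM \boxtimes \EN \boxtimes \EK$ by one and the same condensable algebra.

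First I would introduce two condensable algebras in $\EM \boxtimes \EN \boxtimes \EK$: set $\EL_1 := L_\EC \boxtimes \one_\EK$, where $L_\EC$ sits in $\EC \boxtimes \overline{\EC} \subset \EM \boxtimes \EN$ via $\iota_\EM \boxtimes \iota_\EN$; and set $\EL_2 := \one_\EM \boxtimes L_\EE$, where $L_\EE$ sits in $\EE \boxtimes \EE \subset \EN \boxtimes \EK$ via $\EE \hookrightarrow \overline{\EC} \hookrightarrow \EN$ in the first factor and $\iota_\EK|_\EE$ in the second. I would check that $\EL_1$ and $\EL_2$ mutually centralize: the double braiding between simple summands of $\EL_1$ and $\EL_2$ reduces to a double braiding in $\EN$ between an object of $\overline{\EC}$ and one of $\EE$, which is trivial because $\EE$ lies in the M\"{u}ger center of $\overline{\EC}$. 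Consequently the tensor product $X := \EL_1 \otimes \EL_2$ inherits a canonical commutative algebra structure, is condensable (it is separable as a product of separable commutative algebras in mutually centralizing positions, and connected by $\hom(\one,X)=\hom(\one,\EL_1)\otimes\hom(\one,\EL_2)=\Cb$), and contains both $\EL_1$ and $\EL_2$ as condensable subalgebras.

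Next, I would identify the LHS of (\ref{diag:boxtimes-C-E}) with $(\EM \boxtimes \EN \boxtimes \EK)_X^0$ in two steps. Because the braiding in $\EM \boxtimes \EN \boxtimes \EK$ factors through the three factors and $\EK$ contributes trivially to locality with respect to $\EL_1$, we have $(\EM \boxtimes \EN \boxtimes \EK)_{\EL_1}^0 \simeq (\EM \boxtimes \EN)_{L_\EC}^0 \boxtimes \EK$. Now the $L_\EE$ to be condensed in this intermediate category, viewed as an algebra in $\EM \boxtimes \EN \boxtimes \EK$ via the correspondence between algebras in $\EC_B^0$ and algebras in $\EC$ containing $B$, is canonically isomorphic to $X$. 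Applying the nested-condensation formula $(\EC_B^0)_A^0\simeq \EC_A^0$ (recalled in Sec.\,\ref{sec:alg-bfc}) with $B=\EL_1\subset A=X$ yields $\mathrm{LHS} \simeq (\EM \boxtimes \EN \boxtimes \EK)_X^0$. The RHS is handled symmetrically, condensing $\EL_2$ first and then $\EL_1$, and the same formula produces $\mathrm{RHS} \simeq (\EM \boxtimes \EN \boxtimes \EK)_X^0$. The braided equivalence therefore follows, and the compatibility of the $\EE$-embeddings is inherited from the common factorization of the central functor $\EE \hookrightarrow \EE^{\boxtimes 3} \hookrightarrow \EM \boxtimes \EN \boxtimes \EK \xrightarrow{-\otimes X} (\EM \boxtimes \EN \boxtimes \EK)_X^0$.

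The main technical obstacle is the identification, as algebras (not just as objects), of the lifted ``$\tilde L_\EE$'' from $(\EM \boxtimes \EN)_{L_\EC}^0 \boxtimes \EK$ with $X$ inside $\EM \boxtimes \EN \boxtimes \EK$. The two possible embeddings of $\EE$ into $\EM \boxtimes \EN$ (through the first vs.\ the second factor of $\EC \boxtimes \overline{\EC}$) yield \emph{a priori} different underlying objects when tensored with $L_\EC$, and one must show they coincide after free induction up to the canonical braiding $c_{e,c^\ast}$ provided by $\EE\subset\EC'$; this is exactly the commutation condition used above and makes the algebra structures match as well. Once this identification is in place, the two sides of the diagram are equal as modular extensions of $\EE$.
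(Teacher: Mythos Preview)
Your proposal is correct and follows essentially the same route as the paper: both arguments reduce the two iterated condensations to a single condensation in $\EM\boxtimes\EN\boxtimes\EK$ over the common algebra $X=(L_\EC\boxtimes\one)\otimes(\one\boxtimes L_\EE)$, and then verify that the two embeddings of $\EE$ agree after tensoring with $X$. The only cosmetic difference is that the paper obtains $X$ (their $A_1\simeq A_2$) as the right adjoint $(\otimes\circ(\otimes\boxtimes\id_\EE))^\vee(\one)\simeq(\otimes\circ(\id_\EC\boxtimes\otimes))^\vee(\one)$ evaluated on the commutative square $\EC\boxtimes\overline{\EC}\boxtimes\EE\to\EC$, which packages your ``mutual centralization plus nested condensation'' step and the algebra identification into a single adjoint computation.
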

\pf
Let $(\EM, \iota_\EM), (\EN, \iota_\EN) \in \EM_{ext}(\EC)$ and $(\EP,\iota_\EP) \in \EM_{ext}(\EE)$. Then we have
\begin{align} 
(\EM\boxtimes_\EC^{(\iota_\EM,\overline{\iota_\EN})} \overline{\EN}) \boxtimes_\EE^{(\iota_\EM\boxtimes_\EC \overline{\iota_\EN}, \iota_\EP)} \EP &= 
\left( \left( (\EM \boxtimes \overline{\EN})_{L_\EC}^0 \boxtimes \EP\right)_{L_\EE}^0, \,\, \EE\hookrightarrow
\left(\EM \boxtimes \overline{\EN})_{L_\EC}^0 \boxtimes \EP\right)_{L_\EE}^0 \right),  \label{eq:A1} \\
\EM\boxtimes_\EC^{(\iota_\EM, \,\, \overline{\iota_\EN}\boxtimes_\EE \iota_\EP)} (\overline{\EN}\boxtimes_\EE^{(\overline{\iota_\EN}, \iota_\EP)} \EP) &= \left( \left( \EM \boxtimes (\overline{\EN} \boxtimes \EP)_{L_\EE}^0\right)_{L_\EC}^0, \,\, \EE
\hookrightarrow 
\left( \EM \boxtimes (\overline{\EN} \boxtimes \EP)_{L_\EE}^0\right)_{L_\EC}^0 \right). \label{eq:A2}
\end{align}
First, notice that there are condensable algebras $A_1$ and $A_2$ in $\EM\boxtimes \overline{\EN} \boxtimes \EP$ such that 
$$
\left( (\EM \boxtimes \overline{\EN})_{L_\EC}^0 \boxtimes \EP\right)_{L_\EE}^0 \simeq 
(\EM\boxtimes \overline{\EN} \boxtimes \EP)_{A_1}^0, \quad 
\mbox{and} \quad
\left( \EM \boxtimes (\overline{\EN} \boxtimes \EP)_{L_\EE}^0\right)_{L_\EC}^0 \simeq
(\EM\boxtimes \overline{\EN} \boxtimes \EP)_{A_2}^0.
$$
The algebra $A_1$ can be uniquely (up to isomorphisms) determined by the image of the tensor unit under the following composed forgetful functors:
$$
\left( (\EM \boxtimes \overline{\EN})_{L_\EC} \boxtimes \EP\right)_{L_\EE} 
\xrightarrow{forget} (\EM \boxtimes \overline{\EN})_{L_\EC} \boxtimes \EP 
\xrightarrow{forget} \EM\boxtimes \overline{\EN} \boxtimes \EP.
$$
Instead of $A_1$, let us first consider how to determine $A_1\cap (\EC\boxtimes \overline{\EC} \boxtimes \EE)$. Restricting to the fusion subcategory $\EC\boxtimes \overline{\EC} \boxtimes \EE$ of $\EM\boxtimes \overline{\EN} \boxtimes \EP$, the right adjoint functors of above two forgetful functors give the left and the bottom functors, respectively, in the following diagram 
\be \label{diag:CCE}
\xymatrix{ \EC \boxtimes \overline{\EC} \boxtimes \EE \ar[rr]^-{\id_\EC \boxtimes \otimes}  \ar[d]_{\otimes \boxtimes \id_\EE} & & \EC \boxtimes \overline{\EC} \ar[d]^{\otimes} \\ \EC \boxtimes \EE \ar[rr]^-\otimes & & \EC\,. }
\ee
Therefore, we obtain 
\begin{align}
A_1 \cap (\EC\boxtimes \overline{\EC} \boxtimes \EE) &\simeq 
(\otimes \circ (\otimes \boxtimes \id_\EE))^\vee(\one_\EE) 
\simeq \oplus_{i\in \ob(\EE)} \otimes^\vee(i)\boxtimes i^\ast  \nn
&\simeq \oplus_{i\in \ob(\EE)} 
\left( L_\EC\otimes (\one_\EC \boxtimes i)\right) \boxtimes i^\ast  
\simeq (L_\EC \boxtimes \one_\EE) \otimes (\one_\EC \boxtimes L_\EE), 
\end{align}
where, in the third ``$\simeq$'', we have used the identity $\otimes^\vee(i)=L_\EC \otimes (\one_\EC \boxtimes i)$ (see for example \cite[Eq.\,(2.41)]{kong-runkel}). Since $\fpdim(A)=\fpdim(L_\EC)\fpdim(L_\EE)$, we must have 
\be \label{eq:A1-2}
A_1= (\otimes \circ (\otimes \boxtimes \id_\EE))^\vee(\one_\EC) = (L_\EC \boxtimes \one_\EE) \otimes (\one_\EC \boxtimes L_\EE).
\ee
Using similar arguments, we can show that
$$
A_2= (\otimes \circ (\id_\EC \boxtimes \otimes))^\vee(\one_\EC) = (\one_\EE \boxtimes L_\EE) \otimes (L_\EC \boxtimes \one_\EE).
$$ 
By the commutativity of the diagram (\ref{diag:CCE}), we must have $A_1\simeq A_2$ as algebras. It remains to prove that two embeddings of $\EE$ in Eq.\,(\ref{eq:A1}) and Eq.\,(\ref{eq:A2}) are isomorphic if we identify their codomains via $A_1\simeq A_2$. Note that these two embeddings can be identified with the functors $(\one_\EM\boxtimes \one_{\overline{\EN}} \boxtimes -) \otimes A_1$ and $(-\boxtimes \one_{\overline{\EN}} \boxtimes \one_\EP) \otimes A_2$, respectively. We have, for $x\in \EE$, 
$$
(\one_\EM \boxtimes \one_{\overline{\EM}} \boxtimes x)\otimes A_1  \simeq (\one_\EM \boxtimes x \boxtimes \one_\EP)\otimes A_1 
\simeq (x \boxtimes \one_{\overline{\EM}} \boxtimes \one_\EN) \otimes A_1 \simeq (x \boxtimes \one_{\overline{\EM}} \boxtimes \one_\EN) \otimes A_2. 
$$
Then it is clear that the functors $(\one_\EM\boxtimes \one_{\overline{\EN}} \boxtimes -) \otimes A_1$ and $(-\boxtimes \one_{\overline{\EN}} \boxtimes \one_\EP) \otimes A_2$ are isomorphic.
\epf

We are ready to state and prove the second main result of this work. 
\begin{thm} \label{thm:torsor}
The $\EM_{ext}(\EE)$-action on $\EM_{ext}(\EC)$ is free and transitive. In other words, the set $\EM_{ext}(\EC)$ is an $\EM_{ext}(\EE)$-torsor. 
\end{thm}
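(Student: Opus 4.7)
The plan is to show that, for each fixed $\EM \in \EM_{ext}(\EC)$, the action map $\Phi_\EM : \EP \mapsto \EM \boxtimes_\EE^{(\iota_\EM,\iota_\EP)} \EP$ is a bijection $\EM_{ext}(\EE) \to \EM_{ext}(\EC)$, by exhibiting the explicit candidate inverse $\Psi_\EM : \EN \mapsto \EN \boxtimes_\EC^{(\iota_\EN,\overline{\iota_\EM})} \overline{\EM}$ and verifying the two composition identities $\Psi_\EM \circ \Phi_\EM \simeq \id$ (freeness) and $\Phi_\EM \circ \Psi_\EM \simeq \id$ (transitivity). The three essential tools are the commutative diagram (\ref{diag:boxtimes-C-E}), Lemma\,\ref{lem:cze} (which gives $\EM \boxtimes_\EC \overline{\EM} \simeq (Z(\EE),\iota_0)$), and Cor.\,\ref{rema:unit-of-group} (which says $(Z(\EE),\iota_0)$ is the identity of $\EM_{ext}(\EE)$).

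For freeness, setting $(A,B,C)=(\EM,\overline{\EM},\EP)$ in (\ref{diag:boxtimes-C-E}) yields the cancellation formula
\[
\EM \boxtimes_\EC (\overline{\EM} \boxtimes_\EE \EP) \simeq (\EM \boxtimes_\EC \overline{\EM}) \boxtimes_\EE \EP \simeq Z(\EE) \boxtimes_\EE \EP \simeq \EP,
\]
which shows $\EP$ is recovered from $\overline{\EM} \boxtimes_\EE \EP$. Combining this with the natural involutive conjugation isomorphism $\EM_{ext}(\EC) \simeq \EM_{ext}(\overline{\EC})$, $(\EK,\iota_\EK) \mapsto (\overline{\EK},\overline{\iota_\EK})$, which intertwines the two $\EM_{ext}(\EE)$-actions (sending $\EM \boxtimes_\EE \EP$ to $\overline{\EM} \boxtimes_\EE \overline{\EP}$), an isomorphism $\EM \boxtimes_\EE \EP_1 \simeq \EM \boxtimes_\EE \EP_2$ yields $\overline{\EM} \boxtimes_\EE \overline{\EP_1} \simeq \overline{\EM} \boxtimes_\EE \overline{\EP_2}$. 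Applying $\EM \boxtimes_\EC -$ to both sides and invoking the cancellation formula then gives $\overline{\EP_1} \simeq \overline{\EP_2}$, hence $\EP_1 \simeq \EP_2$.

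For transitivity, I would first establish a parallel compatibility identity whose output lies in $\EM_{ext}(\EC)$ rather than in $\EM_{ext}(\EE)$, namely
\[
\EM \boxtimes_\EE (\EN \boxtimes_\EC \overline{\EM}) \simeq \EN \boxtimes_\EE (\EM \boxtimes_\EC \overline{\EM})
\]
as modular extensions of $\EC$. This is proved by the same technique as (\ref{diag:boxtimes-C-E}): unfold both sides as condensations of the triple product $\EM \boxtimes \EN \boxtimes \overline{\EM}$ by algebras obtained as right-adjoint images of $\one$ under two different composed central functors on the symmetric sector, and verify that the two algebras coincide by associativity of the tensor product of $\EC$ (cf.\ the arguments leading to equations (\ref{eq:A1-2}) and (\ref{eq:A2})). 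The right-hand side then simplifies, via Lemma\,\ref{lem:cze}, to $\EN \boxtimes_\EE Z(\EE) \simeq \EN$ by Cor.\,\ref{rema:unit-of-group}, completing the check that $\Phi_\EM \circ \Psi_\EM \simeq \id$.

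The main obstacle will be the careful bookkeeping of the embeddings $\iota_\EM$, $\iota_\EN$, $\overline{\iota_\EM}$ and of the various $L_\EE$ and $L_\EC$ subalgebras during the reorderings of condensations. These must be tracked so that the final equivalences respect the canonical embeddings of $\EC$ on both sides, yielding isomorphisms of modular extensions rather than merely equivalences of the underlying UMTC's; this is analogous to the last step in the proof of (\ref{diag:boxtimes-C-E}), where the compatibility of the two embeddings of $\EE$ is checked by computing on a generic object $x \in \EE$.
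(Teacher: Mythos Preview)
Your freeness argument is essentially the paper's, though the detour through conjugation is unnecessary: the cancellation formula $\overline{\EM}\boxtimes_\EC(\EM\boxtimes_\EE\EK)\simeq\EK$ already follows directly from diagram~(\ref{diag:boxtimes-C-E}), Lemma~\ref{lem:cze}, and Cor.~\ref{rema:unit-of-group}, and applying it to $\EM\boxtimes_\EE\EK_1\simeq\EM\boxtimes_\EE\EK_2$ gives $\EK_1\simeq\EK_2$ immediately.

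The transitivity argument, however, has a genuine gap. Your key identity $\EM\boxtimes_\EE(\EN\boxtimes_\EC\overline{\EM})\simeq\EN\boxtimes_\EE(\EM\boxtimes_\EC\overline{\EM})$ is (after using Prop.~\ref{prop:comm}) exactly the identity~(\ref{eq:transitive}) in the paper, but it cannot be proved ``by the same technique as~(\ref{diag:boxtimes-C-E})''. Unfolding both sides as condensations in $\EM\boxtimes\overline{\EM}\boxtimes\EN$ gives the two condensable algebras
\[
A_1=(L_\EC\boxtimes\one_\EN)\otimes(\one_\EM\boxtimes L_\EE),\qquad
A_2=(L_\EE\boxtimes\one_\EN)\otimes(\one_\EM\boxtimes L_\EC),
\]
and these are \emph{not} isomorphic as objects when $\EC\neq\EE$: the simple summands of $A_1$ in the first tensor slot range over all of $O(\EC)$, while those of $A_2$ range only over $O(\EE)$. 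The reason the argument for~(\ref{diag:boxtimes-C-E}) succeeded is that there the $L_\EC$ sits in slots $1$--$2$ on \emph{both} sides (only the order of condensation changes), so associativity of $\otimes$ on $\EC\boxtimes\overline{\EC}\boxtimes\EE$ forces the two right-adjoint algebras to agree. In~(\ref{eq:transitive}) the $L_\EC$ sits in slots $1$--$2$ on one side and slots $2$--$3$ on the other, and no associativity diagram in $\EC$ identifies the two.

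The paper resolves this by an additional ingredient you are missing: $A_1$ and $A_2$ are, respectively, the right and left centers (in the sense of Van Oystaeyen--Zhang) of the connected separable but \emph{non-commutative} algebra $A=(L_\EC\boxtimes\one_\EN)\otimes(\one_\EM\boxtimes L_\EC)$, and one then invokes the theorem of Fr\"ohlich--Fuchs--Runkel--Schweigert \cite[Thm.~5.20]{ffrs} that the categories of local modules over the two centers of a special symmetric Frobenius algebra are canonically braided equivalent. The compatibility of embeddings of $\EC$ is then read off from the explicit form of the FFRS equivalence. So your outline is on the right track structurally, but the step ``the two algebras coincide'' is false and requires this deeper input.
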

\pf
That the action is free follows from the identities, 
$$
\overline{\EM} \boxtimes_\EC^{(\overline{\iota_\EM}, \,\, \iota_\EM\boxtimes_\EE\iota_\EK)} (\EM\boxtimes_\EE^{(\iota_\EM,\,\, \iota_\EK)} \EK)\simeq 
(\overline{\EM} \boxtimes_\EC^{(\overline{\iota_\EM}, \,\, \iota_\EM)} \EM) \boxtimes_\EE^{(\overline{\iota_\EM}\boxtimes_\EE\iota_\EM, \,\,\iota_\EK)} \EK \simeq Z(\EE)\boxtimes_\EE^{(\iota_0, \iota_\EK)} \EK \simeq \EK,
$$ 
where the first $\simeq$ follows from the commutativity of the diagram (\ref{diag:boxtimes-C-E}) and the second $\simeq$ follows from Eq.\,(\ref{eq:MM=1}) for $(\EM,\iota_\EM) \in \EM_{ext}(\EC)$, $(\EK,\iota_\EK) \in \EM_{ext}(\EE)$.

To prove the transitivity of the $\EM_{ext}(\EE)$-action, we use a fundamental result \cite[Thm.\,5.20]{ffrs}, which says that the categories of local modules over two algebras in a MTC are canonically braided equivalent if these two algebras are the left and the right center of the same special symmetric Frobenius algebra, respectively. More explicitly, for any $(\EM, \iota_\EM), (\EN, \iota_\EN)\in \EM_{ext}(\EC)$, we define $(\EK, \iota_\EK): = \overline{\EM}\boxtimes_\EC^{(\overline{\iota_\EM}, \iota_\EN)} \EN  \in \EM_{ext}(\EE)$. It is enough to show that $\EN \simeq \EM \boxtimes_\EE^{(\iota_\EM, \iota_\EK)} \EK$. By Eq.\,(\ref{eq:MM=1}) and Lemma\,\ref{lem:unit-of-group}, it is enough to show that 
\be  \label{eq:transitive}
(\EM\boxtimes_\EC^{(\iota_\EM,\,\,\overline{\iota_\EM})} \overline{\EM}) \boxtimes_\EE^{(\iota_\EM\boxtimes_\EC \overline{\iota_\EM},\,\, \iota_\EN)} \EN \simeq 
\EM\boxtimes_\EE^{(\iota_\EM, \,\, \overline{\iota_\EM}\boxtimes_\EC \iota_\EN)} (\overline{\EM}\boxtimes_\EC^{(\overline{\iota_\EM},\,\, \iota_\EN)} \EN). 
\ee
More explicitly, using similar arguments used in proving Eq.\,(\ref{eq:A1-2}), we obtain
\begin{align}
(\EM\boxtimes_\EC^{(\iota_\EM,\overline{\iota_\EM})} \overline{\EM}) \boxtimes_\EE^{(\iota_\EM\boxtimes_\EC \overline{\iota_\EM}, \iota_\EN)} \EN &=\left( (\EM \boxtimes \overline{\EM} \boxtimes \EN)_{A_1}^0, \,\, f_1: \EC\xrightarrow{(\one_\EM \boxtimes \one_{\overline{\EM}} \boxtimes -) \otimes A_1} (\EM \boxtimes \overline{\EM} \boxtimes \EN)_{A_1}^0 \right),  \nn
\EM\boxtimes_\EE^{(\iota_\EM, \,\, \overline{\iota_\EM}\boxtimes_\EC \iota_\EN)} (\overline{\EM}\boxtimes_\EC^{(\overline{\iota_\EM}, \iota_\EN)} \EN) &= \left( (\EM \boxtimes \overline{\EM} \boxtimes \EN)_{A_2}^0, \,\, f_2: \EC\xrightarrow{(- \boxtimes \one_{\overline{\EM}} \boxtimes \one_\EN) \otimes A_2} (\EM \boxtimes \overline{\EM} \boxtimes \EN)_{A_2}^0  \right), \nonumber
\end{align}
where 
$$
A_1=  (L_\EC\boxtimes \one_\EN) \otimes (\one_\EM \boxtimes L_\EE), \quad\quad 
A_2= (L_\EE \boxtimes \one_\EN) \otimes (\one_\EM \boxtimes L_\EC)
$$
are condensable algebras in $\EM\boxtimes \overline{\EM} \boxtimes \EN$. 
It is a direct check that the condensable algebras $A_1$ and $A_2$ are the right center and the left center (\cite{oz}) of the algebra $A=(L_\EC \boxtimes \one_\EN) \otimes (\one_\EM \boxtimes L_\EC)$, respectively. The algebra $A$ is connected and separable but not commutative, and is automatically a symmetric special Frobenius algebra in the sense of \cite{ffrs} (see \cite[Remark\,2.8]{kong}). By \cite[Thm\,5.20]{ffrs}, there is a canonical composed braided equivalence 
$$
h: \,\, (\EM\boxtimes \overline{\EM}\boxtimes \EN)_{A_1}^0 \xrightarrow{\simeq} (\EM\boxtimes \overline{\EM}\boxtimes \EN)_{A|A}^0 \xrightarrow{\simeq} (\EM\boxtimes \overline{\EM}\boxtimes \EN)_{A_1}^0,
$$
where $(\EM\boxtimes \overline{\EM}\boxtimes \EN)_{A|A}^0$ is a well-defined full subcategory of the category of $A$-$A$-bimodules in $(\EM\boxtimes \overline{\EM}\boxtimes \EN)$ (see \cite[Def.\,5.6]{ffrs} for the precise definition). Moreover, by Eq.\,(5.38), (5.46) in \cite{ffrs} and the definition of the functor $G$ in the proof of Theorem\, 5.20 in \cite{ffrs}, the functor $h$ maps as follows
$$
(\one_\EM \boxtimes \one_{\overline{\EM}} \boxtimes c)\otimes A_1  \mapsto (\one_\EM \boxtimes \one_{\overline{\EM}} \boxtimes c)\otimes A 
\simeq (c \boxtimes \one_{\overline{\EM}} \boxtimes \one_\EN) \otimes A  \mapsto (c \boxtimes \one_{\overline{\EM}} \boxtimes \one_\EN) \otimes A_2,
$$
for $c\in \EC$. Then it is clear that $f_2\simeq h\circ f_1$. This completes the proof of the identity\,(\ref{eq:transitive}). 
\epf

\begin{rema} {\rm
Physically, the result above means that the difference of two symmetry enriched topological (SET) orders over a \umtce ~$\EC$ can be measured by SET orders over $\EE$, which are not unique in general (see Remark\,\ref{rema:eq-relation}). 
}
\end{rema}

\subsection{Symmetry breaking and group homomorphisms} \label{sec:sym-breaking}
 
Let $\EC$ be a \mtce, $\EM$ a modular extension of $\EC$ and $A$ a condensable algebra in $\EE$. The UMTC $\EM_A^0$ contains both categories $\EC_A$ and $\EE_A$ as fusion subcategories. It is clear that $\EC_A \subset \EE_A'|_{\EM_A^0}$. Moreover, we have $\fpdim(\EE_A)=\fpdim(\EE)/\fpdim{A}$ and $\fpdim(\EM_A^0)=\fpdim(\EC_A)\fpdim(\EE_A)$. Therefore, we must have $\EC_A=\EE_A'|_{\EM_A^0}$. Namely, $\EC_A$ is a $\mathrm{UMTC}_{/\EE_A}$ and $(\EM_A^0, \EC_A\hookrightarrow \EM_A^0)$ is a modular extension of $\EC_A$. Therefore, the assignment
$$
(\EM, \EC \hookrightarrow \EM) \mapsto (\EM_A^0, \EC_A \hookrightarrow \EM_A^0)
$$
defines a map $f_A: \EM_{ext}(\EC) \to \EM_{ext}(\EC_A)$ that describes a symmetry-breaking process.

\begin{rema} {\rm
When $\EE=\rep(G)$ and $A=\fun(G)$, we have $\EE_A\simeq \vect$ and $\EC_A=\EM_A^0$ is a UMTC.  
}
\end{rema}

\begin{prop}  \label{thm:sym-breaking-1}
When $\EC=\EE$, the map $f_A: \EM_{ext}(\EE) \to \EM_{ext}(\EE_A)$ is a group homomorphism. 
\end{prop}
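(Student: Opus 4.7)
The plan is to reduce both sides of the desired equality
\be
f_A\bigl(\EM\boxtimes_\EE^{(\iota_\EM,\iota_\EN)}\EN\bigr)\simeq f_A(\EM)\boxtimes_{\EE_A}^{(-,-)}f_A(\EN)
\ee
to the same condensation of a single common algebra in $\EM\boxtimes\EN$, and then invoke the associativity of iterated condensations $(\EC_{B'}^0)_{A'}^0\simeq\EC_{A'}^0$ for $B'\subset A'$ condensable subalgebras, recalled at the end of Section~\ref{sec:alg-bfc}.

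I would first construct the common algebra. Consider the symmetric monoidal (hence central) functor $G:\EE\boxtimes\EE\xrightarrow{\otimes}\EE\xrightarrow{-\otimes A}\EE_A$ and set $B:=G^\vee(\one_{\EE_A})\in\EE\boxtimes\EE\subset\EM\boxtimes\EN$. By Thm.~\ref{thm:dmno-1}, $B$ is a condensable algebra and $G^\vee$ realises $(\EE\boxtimes\EE)_B^0\simeq\EE_A$ as braided fusion categories. The key observation is that $B$ contains both $L_\EE$ and $A\boxtimes A$ as condensable subalgebras. Indeed, the unit $\one_\EE\to A$ induces a morphism of algebras $L_\EE=\otimes^\vee(\one_\EE)\hookrightarrow\otimes^\vee(A)=G^\vee(\one_{\EE_A})=B$; and using the factorisation $G\simeq\otimes_{\EE_A}\circ\bigl((-\otimes A)\boxtimes(-\otimes A)\bigr)$ (valid because $-\otimes A$ is symmetric monoidal) one identifies $B$ with $\bigl((-\otimes A)^\vee\boxtimes(-\otimes A)^\vee\bigr)(L_{\EE_A})$, inside which $A\boxtimes A=\bigl((-\otimes A)^\vee\boxtimes(-\otimes A)^\vee\bigr)(\one)$ sits as a subalgebra. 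A comparison of Frobenius--Perron dimensions further gives $B/L_\EE\simeq A$ inside $(\EE\boxtimes\EE)_{L_\EE}^0\simeq\EE$ and $B/(A\boxtimes A)\simeq L_{\EE_A}$ inside $(\EE\boxtimes\EE)_{A\boxtimes A}^0\simeq\EE_A\boxtimes\EE_A$.

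Applying the iterated condensation identity inside $\EC=\EM\boxtimes\EN$ then yields
\be
\bigl((\EM\boxtimes\EN)_{L_\EE}^0\bigr)_A^0\simeq(\EM\boxtimes\EN)_B^0\simeq\bigl((\EM\boxtimes\EN)_{A\boxtimes A}^0\bigr)_{L_{\EE_A}}^0
\ee
as braided fusion categories. The left-hand side is $f_A\bigl(\EM\boxtimes_\EE^{(\iota_\EM,\iota_\EN)}\EN\bigr)$ by definition. For the right-hand side I would use the standard identification $\EM_A^0\boxtimes\EN_A^0\simeq(\EM\boxtimes\EN)_{A\boxtimes A}^0$ (the exterior tensor product of local modules is local over $A\boxtimes A$, and the dimensions match by Eq.~(\ref{eq:fpdim-A}) and (\ref{eq:ZC_A})) to rewrite it as $f_A(\EM)\boxtimes_{\EE_A}^{(-,-)}f_A(\EN)$.

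The main obstacle is the bookkeeping of the embeddings of $\EE_A$ on both sides of the resulting equivalence. The LHS embedding is obtained by first embedding $\EE\hookrightarrow(\EM\boxtimes\EN)_{L_\EE}^0$ via $-\otimes L_\EE$ and then condensing $A$, whereas the RHS embedding is obtained by embedding $\EE_A\boxtimes\EE_A$ componentwise into $\EM_A^0\boxtimes\EN_A^0$ and then condensing $L_{\EE_A}$. I expect both to agree with the single embedding $\EE_A\simeq(\EE\boxtimes\EE)_B^0\hookrightarrow(\EM\boxtimes\EN)_B^0$ induced by the universal property of $G^\vee$ in Thm.~\ref{thm:dmno-1}; once this compatibility is checked, it follows that $f_A$ intertwines $\boxtimes_\EE^{(-,-)}$ with $\boxtimes_{\EE_A}^{(-,-)}$, and because $f_A$ clearly sends the identity element $(Z(\EE),\iota_0)$ to $(Z(\EE_A),\iota_0)$ (as $Z(\EE)_A^0\simeq Z(\EE_A)$ preserving the canonical embedding), the map is a group homomorphism.
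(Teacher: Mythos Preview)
Your approach to multiplicativity is essentially the paper's: both use that the two composites $\EE\boxtimes\EE\xrightarrow{\otimes}\EE\xrightarrow{-\otimes A}\EE_A$ and $\EE\boxtimes\EE\xrightarrow{(-\otimes A)\boxtimes(-\otimes A)}\EE_A\boxtimes\EE_A\xrightarrow{\otimes}\EE_A$ agree (because $-\otimes A$ is monoidal), so their right adjoints produce the same condensable algebra in $\EM\boxtimes\EN$, and iterated condensation gives the required braided equivalence. The paper names this algebra $R_1(\one_{\EE_A})\simeq R_2(\one_{\EE_A})$ and declares the embedding compatibility ``tautological'', in line with your expectation; your presentation is slightly more explicit about the two subalgebras $L_\EE$ and $A\boxtimes A$ of $B$.

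Where you and the paper diverge is on the identity element. You assert that $Z(\EE)_A^0\simeq Z(\EE_A)$ ``clearly'' preserves the canonical embedding of $\EE_A$; the paper, by contrast, devotes the bulk of its proof to precisely this, constructing a braided functor $h:Z(\EE)_A^0\to Z(\EE_A)$ from the centrality of a certain composite and then checking $h\circ e_1=\iota_0$ via the uniqueness of lifts of central functors. Your ``clearly'' is not justified as written. That said, you do not actually need this step: once $f_A$ intertwines the multiplications $\boxtimes_\EE^{(-,-)}$ and $\boxtimes_{\EE_A}^{(-,-)}$ between the finite abelian groups $\EM_{ext}(\EE)$ and $\EM_{ext}(\EE_A)$, it automatically preserves the identity (the image of the identity is idempotent, hence the identity). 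So the paper's separate verification of the unit is redundant, and your argument is complete once multiplicativity is established --- but you should state this group-theoretic reason explicitly rather than appeal to an unproven ``clear'' equivalence.
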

\pf
We first prove that $f_A$ preserves the identity elements. Consider the following diagram: 
$$
\xymatrix{
Z(\EE) \ar[rr]^{-\otimes A} \ar@<+0.5ex>[d]^{\fr} & & Z(\EE)_A  \ar@<+0.5ex>[d]^{g} & &  Z(\EE)_A^0 \ar[d]^h \ar[ll]_{e_2}  \\
\EE \ar[rr]^{-\otimes A} \ar@<+0.5ex>[u]^{\iota_0}  & & \EE_A  \ar[urr]^{e_1} 
\ar@<+0.5ex>[u]^{e}  
\ar@<+0.5ex>[rr]^{\iota_0}  & & Z(\EE_A)  \ar@<+0.5ex>[ll]^{\fr}
}
$$
where $e,e_1,e_2$ are the canonical embeddings, and the functor $g$ is the restriction of the forgetful functor $fr: Z(\EE)\to \EE$ on $Z(\EE)_A$ because $Z(\EE)_A$ is naturally a subcategory of $Z(\EE)$. It is clear that the two overlapped left squares are commutative. 

We claim that the functor $g\circ e_2$ is a central functor. Indeed, if $(M, z_{M,-}) \in Z(\EE)$ is equipped with a local $A$-module structure, the half-braiding $z_{M,-}$ descend to a half braiding $\overline{z}_{M,-}$ on $Z(\EE)_A$, which further descends to a half-braiding $\overline{z}_{M,-}$ for the object $g\circ e_2(M,z_{M,-})=M$ in $\EE_A$. This half-braiding of $M\in \EE_A$ satisfies all the required properties of a central functor because $e_2$ is a central functor. 

Therefore, there is a unique braided functor $h: Z(\EE)_A^0 \to Z(\EE_A)$ such that $g\circ e_2=\fr\circ h$. Since both $Z(\EE)_A^0$ and $Z(\EE_A)$ are non-degenerate and have the same Frobenius-Perron dimensions, $h$ must be a braided equivalence. We claim that $h\circ e_1 = \iota_0$. This follows immediately from $\fr \circ h \circ e_1 = g\circ e_2\circ e_1=\id_{\EE_A}$ and the fact that such $h\circ e_1$ must be the unique lift of the central functor $\id_{\EE_A}$. We have proved that $(Z(\EE)_A^0, e_1) \simeq (Z(\EE_A), \iota_0)$ as modular extensions of $\EE_A$. Therefore, $f_A$ preserves the identity elements. 

It remains to prove that $f_A$ respects the multiplications. This amounts to show that, for any two modular extensions of $\EE$: $(\EM, \iota_\EM)$ and $(\EN, \iota_\EN)$, there is a braided equivalence such that the following diagram 
\be \label{diag:fA-multi}
\xymatrix{
((\EM \boxtimes \EN)_{L_\EE}^0)_A^0 \ar[rr]^{\simeq} & & (\EM_A^0 \boxtimes \EN_A^0)_{L_{\EE_A}}^0 \\
& \EE_A \ar[ul] \ar[ur]
}
\ee
is commutative. Let $R_1, R_2: \EE_A \to \EE \boxtimes \EE$ be the right adjoint functors of the two central monoidal functors $\EE \boxtimes \EE \to \EE_A$, respectively, in the following diagram:
$$
\xymatrix{
\EE \boxtimes \EE \ar[rr]^{\otimes} \ar[d]_{(-\otimes A) \boxtimes (-\otimes A)} & &  \EE \ar[d]^{-\otimes A}  \\
\EE_A \boxtimes \EE_A
 \ar[rr]^{\otimes} & & \EE_A\, .
}
$$
It is commutative because $-\otimes A$ is monoidal. Therefore, $R_1\simeq R_2$ and we have
$$
((\EM \boxtimes \EN)_{L_\EE}^0)_A^0 \simeq (\EM\boxtimes \EN)_{R_1(\one_{\EE_A})} \simeq (\EM\boxtimes \EN)_{R_2(\one_{\EE_A})} \simeq (\EM_A^0 \boxtimes \EN_A^0)_{L_{\EE_A}}^0. 
$$
The commutativity of the diagram (\ref{diag:fA-multi}) is tautological. 
\epf

\begin{expl}
Let $H$ be a subgroup of a finite group $G$. Let $\EE=\rep(G)$ and $A=\fun(G/H)$. We have $\EE_A=\rep(H)$. Recall that $\EM_{ext}(\rep(G))=H^3(G, U(1))$ and $\EM_{ext}(\rep(H))=H^3(H, U(1))$. The map $f_A: H^3(G, U(1)) \to H^3(H, U(1))$ in this case is just $\omega \mapsto \omega|_H$, which is clearly a group homomorphism. 
\end{expl}

\begin{prop}
We have the following commutative diagram: 
$$
\xymatrix{
\EM_{ext}(\EC) \times \EM_{ext}(\EE) \ar[rr]^{\boxtimes_\EE^{(-,-)}} \ar[d]_{f_A\times f_A}  & & \EM_{ext}(\EC) \ar[d]^{f_A}  \\
\EM_{ext}(\EC_A) \times \EM_{ext}(\EE_A) \ar[rr]^{\boxtimes_{\EE_A}^{(-,-)}}  & & \EM_{ext}(\EC_A)
}
$$
\end{prop}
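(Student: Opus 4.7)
The plan is to mimic the proof of Proposition~\ref{thm:sym-breaking-1} almost verbatim. Unfolding the definitions, what must be exhibited is, for each $(\EM,\iota_\EM)\in\EM_{ext}(\EC)$ and $(\EN,\iota_\EN)\in\EM_{ext}(\EE)$, a braided equivalence
\be
\bigl((\EM\boxtimes\EN)_{L_\EE}^0\bigr)_A^0 \;\simeq\; (\EM_A^0\boxtimes \EN_A^0)_{L_{\EE_A}}^0
\ee
together with a check that this equivalence intertwines the two canonical embeddings of $\EC_A$ into the two sides (so that it is an equivalence of modular extensions of $\EC_A$, not merely of UMTC's).

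The braided equivalence itself follows from the same device as in Proposition~\ref{thm:sym-breaking-1}. I would consider the commutative square of central monoidal functors
\be
\xymatrix{
\EE\boxtimes\EE \ar[r]^{\otimes}\ar[d]_{(-\otimes A)\boxtimes (-\otimes A)} & \EE\ar[d]^{-\otimes A}\\
\EE_A\boxtimes \EE_A \ar[r]^{\otimes} & \EE_A
}
\ee
whose commutativity records that $-\otimes A$ is monoidal; take the right adjoints $R_1,R_2\colon \EE_A\to \EE\boxtimes\EE$ of the two composed central functors $\EE\boxtimes\EE\to\EE_A$. Uniqueness of adjoints forces $R_1\simeq R_2$ as algebras in $\EE\boxtimes \EE$, and then applying Theorem~\ref{thm:dmno-1} inside $\EM\boxtimes\EN$ yields
\be
\bigl((\EM\boxtimes\EN)_{L_\EE}^0\bigr)_A^0 \simeq (\EM\boxtimes\EN)_{R_1(\one)}^0 \simeq (\EM\boxtimes\EN)_{R_2(\one)}^0 \simeq (\EM_A^0\boxtimes\EN_A^0)_{L_{\EE_A}}^0.
\ee

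The second step is to match the two embeddings of $\EC_A$. On the left side the embedding of $\EC_A$ factors as $\EC_A\xrightarrow{\,-\otimes L_\EE\,}(\EC\boxtimes\EE)_{L_\EE}^0\otimes A\hookrightarrow((\EM\boxtimes\EN)_{L_\EE}^0)_A^0$, while on the right side it factors as $\EC_A\xrightarrow{-\otimes_A\,}\EM_A^0\hookrightarrow (\EM_A^0\boxtimes\EN_A^0)_{L_{\EE_A}}^0$. Both descriptions send a simple object $c\otimes A$ (with $c\in\EC$) to the class of $c\boxtimes \one_\EN$ tensored with the appropriate condensable algebra in $\EM\boxtimes\EN$, and the equivalence constructed above was obtained by comparing these two algebras via the square of adjunctions. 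Hence the intertwining identity is tautological once one follows $c\otimes A\in\EC_A$ through both compositions; this is the same bookkeeping used at the end of Proposition~\ref{thm:sym-breaking-1} (and earlier in Lemma~\ref{lem:unit-of-group}).

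The only mild obstacle is the embedding-compatibility check: one must verify not just that the braided structures agree, but that the canonical embedding of $\EC_A$ on each side is carried to the other. I would organize this by arguing that the embedding of $\EC_A$ on either side is uniquely determined by the embedding of $\EC$ into $\EM$ composed with $-\otimes A$, together with the universal property defining $(-)_{A}^0$ as a category of local modules, so that any braided equivalence of the ambient UMTC's compatible with the algebras $R_1(\one)\simeq R_2(\one)$ automatically matches the two embeddings of $\EC_A$. With this, commutativity of the stated square reduces to Lemma~\ref{lem:MEN} and the coincidence of the two relative products under $f_A$.
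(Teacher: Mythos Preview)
Your proposal is correct and follows essentially the same approach as the paper. The paper's own proof is a one-sentence sketch invoking exactly the two ingredients you spell out: that $-\otimes A$ is monoidal (giving the commuting square of central functors and hence $R_1\simeq R_2$) and that the composite $\EC\boxtimes\EE\xrightarrow{\otimes}\EC\xrightarrow{-\otimes A}\EC_A$ is central (which packages the embedding-compatibility you treat as a separate ``second step''). The only cosmetic difference is that the paper phrases the square with $\EC\boxtimes\EE$ rather than $\EE\boxtimes\EE$, thereby tracking the embedding of $\EC_A$ simultaneously with the algebra identification; your version separates these two checks, which is fine.
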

\pf
It follows from the fact that the functor $-\otimes A: \EC \to \EC_A$ is monoidal, and the fact that the composed functor $\EC \boxtimes \EE \xrightarrow{\otimes} \EC \xrightarrow{-\otimes A} \EC_A$ is central. 
\epf

\subsection{Relation to Witt groups} \label{sec:witt}

In this subsection, we discuss the relation to Witt groups. We drop the assumption on the unitarity and consider the non-degenerate extensions of $\EE$. The unitary cases are similar. But note that the unitary Witt group is a proper subgroup of the usual Witt group (see \cite[Remark\,5.25]{dmno}).

\begin{defn} {\rm
A fusion category $\EA$ over $\EE$ is a fusion category equipped with a braided full embedding $T: \EE \to Z(\EA)$. 
}
\end{defn}

\begin{defn} {\rm
For a fusion category $\EA$ over $\EE$, the $/\EE$-center $Z_{/\EE}(\EA)$ of $\EA$ is defined by the M\"{u}ger centralizer of $\EE$ in $Z(\EA)$, i.e. $Z_{/\EE}(\EA):=\EE'|_{Z(\EA)}$. 
}
\end{defn}

\begin{defn} {\rm
Let $\EC$ and $\ED$ be two \nbfce's. $\EC$ and $\ED$ are called {\it Witt equivalent} if there exist fusion categories $\EA, \EB$ over $\EE$ and a braided $/\EE$-equivalence:
$$
\EC \boxtimes_\EE Z_{/\EE}(\EA) \simeq \ED \boxtimes_\EE Z_{/\EE}(\EB).
$$
}
\end{defn}

We denote the Witt class of $\EC$ by $[\EC]_{/\EE}$. If $\EE=\vect$, we simplify $[\EC]_{/\vect}$ to $[\EC]$. 
We denote the set of Witt classes of \nbfce ~by $\EW_{/\EE}$. We simplify the notation $\EW_{/\vect}$ to $\EW$. 

\begin{thm} {\rm \cite[Lem.\,5.2]{dno}}
The set $\EW_{/\EE}$ is an abelian group with the multiplication given by $\boxtimes_\EE$ and the identity element given by $[\EE]_{/\EE}$. 
\end{thm}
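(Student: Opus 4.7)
The plan is to verify the group axioms for $(\EW_{/\EE}, \boxtimes_\EE, [\EE]_{/\EE})$ and to identify $[\EC]_{/\EE}^{-1}=[\overline{\EC}]_{/\EE}$. The easy axioms I would dispatch first. Associativity of $\boxtimes_\EE$ on \nbfce's is essentially the content of Prop.\,\ref{prop:associativity}: the associator $\alpha$ constructed there from the right adjoints of the composites $\otimes\circ(\id_\EE\boxtimes\otimes)$ and $\otimes\circ(\otimes\boxtimes\id_\EE)$ uses only the braided-over-$\EE$ structure, not the modular-extension condition, so it transports unchanged. Commutativity $\EC\boxtimes_\EE\ED\simeq\ED\boxtimes_\EE\EC$ follows as in Prop.\,\ref{prop:comm}, since the swap $x\boxtimes y\mapsto y\boxtimes x$ carries $L_\EE$ to $L_\EE$ by the symmetry of $\EE$. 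The unit axiom $\EE\boxtimes_\EE\EC\simeq\EC$ is a standard feature of relative tensor product over a braided fusion category. Well-definedness of $\boxtimes_\EE$ on Witt classes is then a formal consequence of associativity and commutativity.

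The substantive axiom is the existence of inverses, which I would reduce to the single identity
\be
\EC\boxtimes_\EE\overline{\EC}\simeq Z_{/\EE}(\EC), \label{eq:plan-CCbar}
\ee
where $\EC$ is viewed as a fusion category over $\EE$ via the canonical embedding $\EE\hookrightarrow\EC\hookrightarrow Z(\EC)$, $e\mapsto(e,c_{e,-})$. This makes sense because $\EE\subseteq\EC'\subseteq\EC$, and the embedding agrees with the one coming from $\overline{\EC}\to Z(\EC)$, $e\mapsto(e,c_{-,e}^{-1})$, precisely because $c_{e,-}=c_{-,e}^{-1}$ holds on the M\"uger center. Granting (\ref{eq:plan-CCbar}), the inverse axiom is immediate: $Z_{/\EE}(\EC)$ is Witt trivial by definition with witness $\EA=\EC$, so $[\EC]_{/\EE}\boxtimes_\EE[\overline{\EC}]_{/\EE}=[\EE]_{/\EE}$.

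To produce (\ref{eq:plan-CCbar}) I would start with the canonical braided functor $\EC\boxtimes\overline{\EC}\to Z(\EC)$ sending $x\boxtimes y\mapsto(x,c_{x,-})\otimes(y,c_{-,y}^{-1})$, note that its image centralizes the diagonal copy of $\EE$, and check that it factors through the condensation $(\EC\boxtimes\overline{\EC})_{L_\EE}^0=\EC\boxtimes_\EE\overline{\EC}$, i.e.\ that $L_\EE=\bigoplus_{i\in O(\EE)}i\boxtimes i^\ast$ is sent to the tensor unit (again leveraging the symmetry of $\EE$). The resulting braided $/\EE$-functor $\EC\boxtimes_\EE\overline{\EC}\to Z_{/\EE}(\EC)$ is fully faithful on the generating subcategories $\EC\boxtimes_\EE\one$ and $\one\boxtimes_\EE\overline{\EC}$. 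A Frobenius--Perron count then forces it to be an equivalence: by Eq.\,(\ref{eq:fpdim-centralizer}) applied to the embedding $\EE\hookrightarrow Z(\EC)$ with $Z(\EC)'=\vect$, one has $\fpdim(Z_{/\EE}(\EC))=\fpdim(Z(\EC))/\fpdim(\EE)=\fpdim(\EC)^2/\fpdim(\EE)$, matching $\fpdim(\EC\boxtimes_\EE\overline{\EC})=\fpdim(\EC\boxtimes\overline{\EC})/\fpdim(L_\EE)=\fpdim(\EC)^2/\fpdim(\EE)$.

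The main obstacle I expect is the braided comparison underlying (\ref{eq:plan-CCbar}): matching the braiding on $(\EC\boxtimes\overline{\EC})_{L_\EE}^0$, descended from the two-sided braiding on $\EC\boxtimes\overline{\EC}$, with the half-braidings of $Z(\EC)$ restricted to $\EE'|_{Z(\EC)}$, and pinning down that the two embeddings of $\EE$ (one as $L_\EE$-local lift, the other as the diagonal copy inside $Z(\EC)$) are identified on the nose. Once this categorical bookkeeping is in place, the group structure on $\EW_{/\EE}$ drops out.
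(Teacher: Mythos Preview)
The paper does not supply its own proof here; the result is quoted from \cite[Lem.\,5.2]{dno}. Your strategy---dispatch the monoid axioms formally and reduce inverses to the single identity $\EC\boxtimes_\EE\overline{\EC}\simeq Z_{/\EE}(\EC)$---is exactly the route taken in \cite{dno}, and the Frobenius--Perron count you give for that identity is correct.

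There is one concrete error. The braided functor $\Phi:\EC\boxtimes\overline{\EC}\to Z(\EC)$ does \emph{not} send $L_\EE=\bigoplus_{i\in O(\EE)}i\boxtimes i^\ast$ to the tensor unit: its image is $\bigoplus_{i\in O(\EE)}i\otimes i^\ast$ sitting inside $\EE\subset Z(\EC)$, an object of Frobenius--Perron dimension $\fpdim(\EE)$ (for $\EE=\rep(G)$ this is $\fun(G)$, not $\one$). The factorization through $\EC\boxtimes_\EE\overline{\EC}$ nonetheless holds, but for the reason you already recorded immediately after your displayed identity and then set aside: since $\EE\subseteq\EC'$, one has $c_{e,-}=c_{-,e}^{-1}$ for every $e\in\EE$, so the two embeddings of $\EE$ into $Z(\EC)$ through $\EC$ and through $\overline{\EC}$ coincide, and this is precisely the $\EE$-balancedness the universal property of $\boxtimes_\EE$ requires. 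A second point to watch: $\Phi$ itself is not fully faithful when $\EC'=\EE\neq\vect$, so fully-faithfulness of the induced functor on $\EC\boxtimes_\EE\overline{\EC}$ cannot be read off from the two factors separately; with the dimension equality already in hand, it is cleaner to argue surjectivity instead (the image contains both $\EC$ and $\overline{\EC}$, which meet in $\EC'=\EE$ inside $Z(\EC)$ and together generate a fusion subcategory of dimension $\fpdim(\EC)^2/\fpdim(\EE)$).
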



\begin{lem} {\rm \cite[Prop.\,5.13]{dno}}
The assignment $[\EC] \mapsto [\EC\boxtimes \EE ]_{/\EE}$ is a well-defined group homomorphism from $\EW$ to $\EW_{/\EE}$. 
\end{lem}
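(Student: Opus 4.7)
The plan is to check the three things needed: the map sends identity to identity, it is multiplicative, and it descends from isomorphism classes of UMTCs to Witt classes. The unit and multiplicativity will be easy consequences of a single algebraic identity, so the real content is the descent to Witt classes.

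The first step is the key computation: for any fusion category $\EA$, regard $\EA\boxtimes\EE$ as a fusion category over $\EE$ via the embedding $\EE\xrightarrow{\iota_0}Z(\EE)\hookrightarrow Z(\EA)\boxtimes Z(\EE)\simeq Z(\EA\boxtimes\EE)$. I would then show
\[
Z_{/\EE}(\EA\boxtimes\EE)\simeq Z(\EA)\boxtimes\EE
\]
as braided fusion categories over $\EE$, with the $\EE$-structure on the right coming from the second factor. Since the braiding on a product is factor-wise, the centralizer of $\vect\boxtimes\iota_0(\EE)$ in $Z(\EA)\boxtimes Z(\EE)$ is $Z(\EA)\boxtimes\bigl(\iota_0(\EE)'|_{Z(\EE)}\bigr)$, so I only need $\iota_0(\EE)'|_{Z(\EE)}=\iota_0(\EE)$. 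This follows from the general identity $\fpdim(\EF'|_\EM)\fpdim(\EF)=\fpdim(\EM)\fpdim(\EF\cap\EM')$ applied with $\EM=Z(\EE)$ (non-degenerate) and $\EF=\iota_0(\EE)$: it yields $\fpdim(\iota_0(\EE)'|_{Z(\EE)})=\fpdim(\EE)$, and since $\EE$ is symmetric it is contained in its own centralizer, so equality holds.

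Next, I would establish the general identity
\[
(\EX\boxtimes\EE)\boxtimes_\EE(\EY\boxtimes\EE)\simeq \EX\boxtimes\EY\boxtimes\EE
\]
for any braided fusion categories $\EX,\EY$, where the two factors on the left are regarded as BFCs over $\EE$ via their $\EE$-factors. Using the description $(-)\boxtimes_\EE(-)=(-\boxtimes-)_{L_\EE}^0$ from Sec.\,\ref{sec:bfc-over-e}, the algebra $L_\EE$ sits inside the two copies of $\EE$ only, so condensing it reduces to $\EE\boxtimes_\EE\EE=\EE$ on those factors and leaves $\EX\boxtimes\EY$ alone. Combined with the previous step, this gives
\[
(\EC\boxtimes\EE)\boxtimes_\EE Z_{/\EE}(\EA\boxtimes\EE)\simeq \EC\boxtimes Z(\EA)\boxtimes\EE
\]
as non-degenerate braided fusion categories over $\EE$.

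With these two identities in hand, the proof finishes quickly. For well-definedness, if $\EC\boxtimes Z(\EA)\simeq \EC'\boxtimes Z(\EB)$ witnesses $[\EC]=[\EC']$ in $\EW$, then tensoring with $\EE$ (identity on the $\EE$-factor) gives a braided $/\EE$-equivalence $(\EC\boxtimes\EE)\boxtimes_\EE Z_{/\EE}(\EA\boxtimes\EE)\simeq(\EC'\boxtimes\EE)\boxtimes_\EE Z_{/\EE}(\EB\boxtimes\EE)$, so $[\EC\boxtimes\EE]_{/\EE}=[\EC'\boxtimes\EE]_{/\EE}$. Multiplicativity is exactly the identity displayed above applied to $\EX=\EC,\EY=\ED$, and the unit $[\vect]\in\EW$ maps to $[\vect\boxtimes\EE]_{/\EE}=[\EE]_{/\EE}$, which is the unit of $\EW_{/\EE}$.

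The main obstacle I anticipate is the first step, namely ensuring the bookkeeping of embeddings is correct: one must check that the $\EE$-structure produced by the computation of $Z_{/\EE}(\EA\boxtimes\EE)$ agrees on the nose with the one coming from the last factor of $Z(\EA)\boxtimes\EE$, and that the resulting $/\EE$-equivalence after tensoring is really a braided $/\EE$-functor (identity on $\EE$) rather than merely a braided equivalence. Everything else is formal once the self-centralizing identity $\EE'|_{Z(\EE)}=\iota_0(\EE)$ is in place.
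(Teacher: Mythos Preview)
Your proposal is correct and follows essentially the same approach as the paper: the two key ingredients---the identity $Z_{/\EE}(\EA\boxtimes\EE)\simeq Z(\EA)\boxtimes\EE$ (via $\iota_0(\EE)'|_{Z(\EE)}=\iota_0(\EE)$) and the multiplicativity identity $(\EC\boxtimes\EE)\boxtimes_\EE(\ED\boxtimes\EE)\simeq\EC\boxtimes\ED\boxtimes\EE$---are exactly what the paper uses, and you supply more detail than the paper's terse presentation. The only streamlining you miss is that the paper observes it suffices to check preservation of the identity and of multiplication: once you know Drinfeld centers map to the trivial $/\EE$-class and the assignment is multiplicative on representatives, well-definedness on Witt classes follows formally, so your separate check of descent is redundant (though not wrong).
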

\pf
Prop.\,5.13 in \cite{dno} was stated and proved only in the case $\EE=\svect$. But the same proof works for all $\EE$. For convenience of the readers, we include the proof here. It is enough to check that the map preserves the identity element and preserves the multiplication. If $\EC=Z(\EA)$ for some fusion category $\EA$, then $\EC\boxtimes \EE=\EE'|_{\EC\boxtimes Z(\EE)}$. Therefore, $\EC\boxtimes \EE = Z_{/\EE}(\EA\boxtimes \EE)$, i.e. $[\EC\boxtimes \EE]_{/\EE}=[\EE]_{/\EE}$. The identity $
[(\EC\boxtimes \EE)\boxtimes_\EE (\ED \boxtimes \EE)]_{/\EE} = [ \EC\boxtimes \ED \boxtimes \EE]_{/\EE}$ implies that the map preserves the multiplication.
\epf

We denote the group homomorphism by $[-\boxtimes \EE]_{/\EE}$. To relate modular extensions to Witt groups, we first generalize a result in \cite{dno}. 
\begin{prop} \label{lem:surjective}
The assignment $\EM \to [\EM]$ defines a surjective group homomorphism from $\EM_{ext}(\EE)$ to the kernel of the canonical group homomorphism $[-\boxtimes \EE]_{/\EE}: \EW \to \EW_{/\EE}$.
\end{prop}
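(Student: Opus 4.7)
The plan is to verify well-definedness, the homomorphism property, containment in the kernel, and surjectivity. Well-definedness is immediate: equivalent modular extensions of $\EE$ are braided equivalent as UMTCs and hence represent the same Witt class. For the homomorphism property, recall that $\EM\boxtimes_\EE^{(\iota_\EM,\iota_\EN)}\EN = (\EM\boxtimes\EN)_{L_\EE}^0$. Since $L_\EE$ is an \'{e}tale algebra in the UMTC $\EM\boxtimes\EN$, and condensing an \'{e}tale algebra preserves the Witt class (cf.\ \cite{dmno}), we obtain $[\EM\boxtimes_\EE^{(\iota_\EM,\iota_\EN)}\EN]=[\EM\boxtimes\EN]=[\EM][\EN]$ in $\EW$.

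To show the image lies in $\ker[-\boxtimes\EE]_{/\EE}$, view $\EM$ as a fusion category over $\EE$ via the braided embedding $\EE\to Z(\EM)$ induced by the embedding $\EE\hookrightarrow\EM$ together with the braiding of $\EM$. Since $\EM$ is a UMTC, $Z(\EM)\simeq\EM\boxtimes\overline{\EM}$, and under this identification the image of $\EE$ lies in the first factor. Using $\EE'|_\EM=\EE$ (since $\EM$ is a modular extension of $\EE$) we compute
$$
Z_{/\EE}(\EM)=\EE'|_{Z(\EM)}=\EE'|_{\EM\boxtimes\overline{\EM}}=(\EE'|_\EM)\boxtimes\overline{\EM}=\EE\boxtimes\overline{\EM}.
$$
Hence $[\overline{\EM}\boxtimes\EE]_{/\EE}=[Z_{/\EE}(\EM)]_{/\EE}=[\EE]_{/\EE}$, so $[\overline{\EM}]$ (and therefore $[\EM]$, since the kernel is a subgroup) lies in $\ker[-\boxtimes\EE]_{/\EE}$.

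For surjectivity, let $[\EC]\in\ker[-\boxtimes\EE]_{/\EE}$. Unpacking $/\EE$-Witt equivalence and using $\EE\boxtimes_\EE X\simeq X$, there exist fusion categories $\EA,\EB$ over $\EE$ and a braided $/\EE$-equivalence $\EC\boxtimes Z_{/\EE}(\EA)\simeq Z_{/\EE}(\EB)$. Form the UMTC $\widetilde{\EM}:=\EC\boxtimes Z(\EA)$, which contains $\EE$ via $\one\boxtimes(\EE\hookrightarrow Z(\EA))$; its M\"{u}ger centralizer of $\EE$ is $\EC\boxtimes Z_{/\EE}(\EA)\simeq Z_{/\EE}(\EB)$. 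Since $[Z_{/\EE}(\EB)]_{/\EE}=[\EE]_{/\EE}$, the $/\EE$-analog of the Lagrangian-algebra characterization of Witt triviality produces a condensable algebra $L'\in\EC\boxtimes Z_{/\EE}(\EA)\subset\widetilde{\EM}$ with $L'\cap\EE=\one$ and $(\EC\boxtimes Z_{/\EE}(\EA))_{L'}^0\simeq\EE$. Set $\EM:=\widetilde{\EM}_{L'}^0$. Then $\EM$ is a UMTC, Prop.\,\ref{prop:embed} gives $\EE\hookrightarrow\EM$, and $\EE'|_\EM=(\EE'|_{\widetilde{\EM}})_{L'}^0\simeq\EE$, so $\EM$ is a modular extension of $\EE$. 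Finally, $[\EM]=[\widetilde{\EM}]=[\EC\boxtimes Z(\EA)]=[\EC]$ in $\EW$, since condensation preserves Witt class and $Z(\EA)$ is Witt trivial.

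The main obstacle is surjectivity, specifically producing $L'$ and verifying that condensing it inside $\widetilde{\EM}$ yields a UMTC whose M\"{u}ger centralizer of $\EE$ equals $\EE$ exactly. This rests on the $/\EE$-Lagrangian-algebra criterion for Witt triviality in $\EW_{/\EE}$ together with the compatibility between condensation in a UMTC and condensation inside the M\"{u}ger centralizer of a symmetric subcategory.
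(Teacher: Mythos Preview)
Your treatment of well-definedness, the homomorphism property, and containment in the kernel essentially coincides with the paper's. The paper is terser on the homomorphism property (``clearly a group homomorphism''), so your explicit observation $[(\EM\boxtimes\EN)_{L_\EE}^0]=[\EM\boxtimes\EN]$ is a welcome clarification.

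For surjectivity your route diverges from the paper's. The paper uses the characterization (from \cite{dno}) that the trivial class in $\EW_{/\EE}$ consists precisely of categories of the form $Z_{/\EE}(\EA)$: given $[\EM]$ in the kernel, one writes $\EM\boxtimes\EE\simeq Z_{/\EE}(\EA)$ for a \emph{single} fusion category $\EA$ over $\EE$, embeds $\EM$ into $Z(\EA)$, and invokes M\"{u}ger's splitting $Z(\EA)\simeq\EM\boxtimes\EB$ with $\EB$ non-degenerate. A short double-centralizer computation ($\EE\simeq\EE''|_{Z(\EA)}\hookrightarrow\EM'|_{Z(\EA)}\simeq\EB$ and $\EE'|_\EB\simeq(\EM\boxtimes\EE)'|_{Z(\EA)}\simeq\EE$) then shows $\overline{\EB}\in\EM_{ext}(\EE)$ with $[\overline{\EB}]=[\EM]$. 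No condensation is needed in this step.

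Your approach instead builds $\widetilde{\EM}=\EC\boxtimes Z(\EA)$ and condenses an ``$/\EE$-Lagrangian'' algebra $L'$. This works, but two points deserve tightening. First, the existence of $L'$ with $L'\cap\EE=\one$ and $(\EC\boxtimes Z_{/\EE}(\EA))_{L'}^0\simeq\EE$ is exactly the condensable-algebra characterization of the trivial $/\EE$-Witt class in \cite{dno}; you should cite it rather than leave it as an appeal. Second, the identity $\EE'|_\EM=(\EE'|_{\widetilde{\EM}})_{L'}^0$ is not automatic: the containment $\supset$ follows because the double braiding in $\EM$ of $e\otimes L'$ with a local $L'$-module is induced by the double braiding with $e$ in $\widetilde{\EM}$, but the reverse containment needs a dimension count. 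Concretely, $\fpdim(L')^2=\fpdim(\EE'|_{\widetilde{\EM}})/\fpdim(\EE)=\fpdim(\widetilde{\EM})/\fpdim(\EE)^2$, whence $\fpdim(\EM)=\fpdim(\EE)^2$ and, since $\EM$ is non-degenerate, $\fpdim(\EE'|_\EM)=\fpdim(\EM)/\fpdim(\EE)=\fpdim(\EE)$, forcing $\EE'|_\EM=\EE$. With these filled in your argument is complete; the paper's M\"{u}ger-splitting route simply sidesteps both issues.
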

\pf
If $\EM$ is a modular extension of $\EE$, we have a braided equivalence $\overline{\EM} \boxtimes \EM \simeq Z(\EM)$. Note that the canonical full embedding $\EE = \one_{\overline{\EM}}\boxtimes \EE \hookrightarrow Z(\EM)$ is braided monoidal. Therefore, $\EM$ is a fusion category over $\EE$. Since $\EE'|_{\EM}=\EE$, we obtain $\overline{\EM}\boxtimes \EE \simeq Z_{/\EE}(\EM)$. Therefore, $[\overline{\EM}]$ is in the kernel of $\EW \to \EW_{/\EE}$, and so is $[\EM]$. The map $[-]$ is clearly a group homomorphism. 

To prove the surjectivity, consider a Witt class $[\EM]$ in the kernel of $[-\boxtimes \EE]_{/\EE}: \EW \to \EW_{/\EE}$. By definition, there is a fusion category $\EA$ over $\EE$ such that there is a braided $/\EE$-equivalence: $\EM \boxtimes \EE \simeq Z_{/\EE}(\EA)$. Note that $\EM$ is a fusion subcategory of $Z_{/\EE}(\EA)$ and, therefore, a fusion subcategory of $Z(\EA)$. Since both $\EM$ and $Z(\EA)$ are non-degenerate, we must have $Z(\EA) \simeq \EM \boxtimes \EB$, where $\EB$ is non-degenerate. Therefore, we must have a full embedding $\EE\simeq \EE''|_{Z(\EA)}= Z_{/\EE}(\EA)'|_{Z(\EA)} \hookrightarrow \EM'|_{Z(\EA)} \simeq \EB$. Moreover, we have $\EE'|_\EB \simeq \EM' \boxtimes \EE'|_\EB \simeq (\EM\boxtimes \EE)'|_{Z(\EA)} \simeq \EE''|_{Z(\EA)}\simeq\EE$. In other words, $\EB$ is a modular extension of $\EE$, so is $\overline{\EB}$. Notice that $[\EM]=[\overline{\EB}]$. Therefore, $\EM_{ext}(\EE)$ maps onto the kernel of $\EW \to \EW_{/\EE}$. 
\epf

\begin{cor}
The canonical group homomorphism $\EW \to \EW_{/\rep(G)}$, defined by $[\EC]\mapsto [\EC\boxtimes \rep(G)]_{/\rep(G)}$, is injective. 
\end{cor}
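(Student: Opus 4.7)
The plan is to combine Proposition~\ref{lem:surjective} with the explicit description of $\EM_{ext}(\rep(G))$ given by Theorem~\ref{thm:spt}. Concretely, I would argue that the kernel of the canonical map $\EW \to \EW_{/\rep(G)}$ is trivial by showing that every element of $\EM_{ext}(\rep(G))$ is Witt-trivial.

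First, I would invoke Proposition~\ref{lem:surjective} (in its non-unitary version, which the subsection explicitly allows), applied with $\EE = \rep(G)$. This tells us that the kernel of $[-\boxtimes \rep(G)]_{/\rep(G)}: \EW \to \EW_{/\rep(G)}$ equals the image of the assignment $\EM \mapsto [\EM]$ from $\EM_{ext}(\rep(G))$ into $\EW$. So injectivity is equivalent to showing that $[\EM] = [\vect]$ in $\EW$ for every modular extension $\EM$ of $\rep(G)$.

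Next, I would apply Theorem~\ref{thm:spt}, which classifies modular extensions of $\rep(G)$: every such $\EM$ is braided equivalent to $Z(\vect_G^\omega)$ for some $\omega \in H^3(G,U(1))$. Since $Z(\vect_G^\omega)$ is by construction the Drinfeld center of a fusion category, it is Witt-equivalent to $\vect$: indeed $\vect \boxtimes Z(\vect_G^\omega) \simeq Z(\vect_G^\omega) \boxtimes Z(\vect)$, which exhibits the required Witt equivalence with both sides of the form ``something $\boxtimes$ a Drinfeld center.'' Therefore $[\EM] = [Z(\vect_G^\omega)] = [\vect]$ in $\EW$ for all $\EM \in \EM_{ext}(\rep(G))$, so the kernel is trivial and the map is injective.

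There is essentially no obstacle in this argument since the two inputs (Proposition~\ref{lem:surjective} and Theorem~\ref{thm:spt}) do all the heavy lifting; the corollary is just the observation that their combination forces the kernel to consist entirely of Drinfeld centers, which are by definition Witt-trivial. The only minor care needed is to verify that Proposition~\ref{lem:surjective} is applicable in the non-unitary setting used here, which the text has already noted holds verbatim for a general SFC $\EE$, and in particular for $\EE = \rep(G)$.
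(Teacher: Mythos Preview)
Your proposal is correct and follows exactly the paper's approach: the paper's proof simply states that the result follows immediately from Proposition~\ref{lem:surjective} and Theorem~\ref{thm:spt}, which is precisely the combination you outline (the kernel is the image of $\EM_{ext}(\rep(G))$ in $\EW$, and every element there is a Drinfeld center $Z(\vect_G^\omega)$, hence Witt-trivial).
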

\pf
This follows immediately from Lemma\,\ref{lem:surjective} and Thm.\,\ref{thm:spt}. 
\epf

\begin{thm}[\cite{dgno2,dno}] \label{thm:dno-svect}
The map from the set $\EM_{ext}(\svect)$ to the kernel of the canonical group homomorphism $\EW \to \EW_{/\svect}$, defined by $\EC \mapsto [\EC]$, is bijective. 
\end{thm}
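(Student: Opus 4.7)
The plan is to combine the surjectivity already proved in Prop.\,\ref{lem:surjective} with an injectivity statement that we can read off from the classification of the 16 modular extensions of $\svect$ recalled in Sec.\,\ref{sec:mext-svect}. Since the map in question factors as
\[
\EM_{ext}(\svect) \xrightarrow{\Phi} \ker\!\bigl(\EW \to \EW_{/\svect}\bigr) \hookrightarrow \EW,
\]
and the full composition is precisely the assignment $\EM \mapsto [\EM]$, the strategy is to show that this composition is injective; together with the surjectivity of $\Phi$ from Prop.\,\ref{lem:surjective} (specialized to $\EE=\svect$), this forces $\Phi$ to be a bijection.

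First I would invoke Prop.\,\ref{lem:surjective} directly: it tells us that $\Phi$ is a well-defined surjective group homomorphism onto $\ker(\EW \to \EW_{/\svect})$. This takes care of half of the claim and also ensures the codomain is a group in which it is legitimate to argue about a homomorphism.

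For injectivity, I would quote the theorem preceding Sec.\,\ref{sec:mext-svect}'s concluding paragraph, namely that $[-]: \EM_{ext}(\svect) \to \EW$ is an isomorphism onto the subgroup $\Zb_{16}$ of the Witt group. This is exactly the statement that the composed map $\EM_{ext}(\svect) \to \EW$ has trivial kernel. Hence the first factor $\Phi$ of the factorization has trivial kernel as well, i.e.\ is injective. Combining with surjectivity completes the proof, and as a bonus one reads off $\ker(\EW \to \EW_{/\svect}) = \Zb_{16}$.

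I do not expect any genuine obstacle here, since both ingredients have already been established in the excerpt; the only subtlety is to note that Prop.\,\ref{lem:surjective} does give a group homomorphism (not merely a surjection of sets) so that the kernel of the composition controls the kernel of $\Phi$. One could, alternatively, avoid even this subtlety by arguing at the level of sets: any two modular extensions $\EM,\EN$ of $\svect$ with $[\EM]=[\EN]$ in $\EW$ satisfy $[\EM \boxtimes_\svect^{(\iota_\EM,\overline{\iota_\EN})} \overline{\EN}] = [\EM]\cdot[\EN]^{-1} = 0$, and the classification of modular extensions of $\svect$ in Thm.\,\ref{thm:dgno2-ising-Aq} together with the fact that the 16 Witt classes are distinct forces $\EM \boxtimes_\svect^{(\iota_\EM,\overline{\iota_\EN})} \overline{\EN} \simeq (Z(\svect),\iota_0)$, whence $\EM \simeq \EN$ as modular extensions by Cor.\,\ref{cor:MM=1} and Cor.\,\ref{rema:unit-of-group}.
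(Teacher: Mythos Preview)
Your proposal is correct and follows essentially the same approach as the paper: surjectivity comes from Prop.\,\ref{lem:surjective}, and injectivity from the fact (cited from \cite{dgno2,dmno}) that the 16 modular extensions of $\svect$ lie in 16 distinct Witt classes. The only difference is cosmetic---the paper cites \cite{dgno2,dmno} directly for injectivity, whereas you route the same citation through the $\Zb_{16}$ theorem stated earlier in Sec.\,\ref{sec:mext-svect}.
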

\pf 
By Prop\,\ref{lem:surjective}, it is enough to prove the injectivity, which was proved in \cite{dgno2,dmno}. 
\epf

\section{Conclusions and Outlooks} \label{sec:outlook}

In this work, we prove that the set $\EM_{ext}(\EE)$ of (the equivalence classes of) modular extensions of a symmetric fusion category $\EE$ is a finite abelian group, and the set $\EM_{ext}(\EC)$ of modular extensions of an \umtce ~$\EC$ is a $\EM_{ext}(\EE)$-torsor. We explain in details how these groups of modular extensions recover the well-known physical results of the group-cohomology classification of bosonic SPT orders and Kitaev's 16 fold way. 
We also explain briefly the behavior of these groups under symmetry-breaking processes. We hope to convince readers that there is a very rich physical and mathematical theory behind the scene, and we have only scratched its surface. There are many important problems left to be studied. We list a few open problems that are worth studying. 

\bnu

\item Explicitly identify the group $\EM_{ext}(\rep(G,z))$. Physically, we believe that this group should give the classification of SET orders over $\rep(G,z)$ up to $E_8$ quantum Hall states. The subgroup of $\EM_{ext}(\rep(G,z))$ consisting of modular extensions with central charge $c=0$ ($\mathrm{mod}\,\, 8$) classifies all the fermonic SPT orders with symmetry $(G,z)$. 

\item For a generic \mtce ~$\EC$, it is possible that there is no modular extension of $\EC$. Examples of such \mtce's are constructed by Drinfeld for certain integral \mtce ~$\EC$ with $\fpdim(\EC)=8$ and $\fpdim(\EE)=4$ \cite{drinfeld}. It is an important open problem to characterize those \mtce's that admit modular extensions. Its solutions should also deepen our understanding of its physical meaning. 

\item If the modular extension of a given \mtce ~$\EC$ does not exist, it means that the symmetry $\EE$ is anomalous (not on-site and not gaugable). We believe that this phenomenon is detectable by certain global structures that appear when we integrate the local data $\EC$ (defined on an open 2-disk) over all closed 2d surfaces via factorization homology \cite{bbj}. See \cite{lkw2} for more speculations on this issue. 

\item If a \umtce ~$\EC$ does not have any (minimal) modular extension (see Remark\,\ref{rema:non-minimal}). We can always embed $\EC$ into some modular tensor categories, such as $Z(\EC)$, with higher Frobenius-Perron dimensions. What is the minimal Frobenius-Perron dimension of such non-minimal modular extensions? What are the physical meanings of these non-minimal modular extensions of $\EC$? Do they form any interesting mathematical structures for each fixed Frobenius-Perron dimension? 

\enu

\small

\end{document}